\theoremstyle{plain}
\newtheorem{proposition}{Proposition}[section]
\newtheorem{theorem}[proposition]{Theorem}
\newtheorem{corollary}[proposition]{Corollary}
\newtheorem{lemma}[proposition]{Lemma}
\theoremstyle{definition}
\newtheorem{definition}[proposition]{Definition}
\newtheorem{example}[proposition]{Example}
\theoremstyle{remark}
\newtheorem{remark}[proposition]{Remark}
\newtheorem{const}[proposition]{Construction}
\newtheorem*{convention}{Conventions}
\newtheorem*{acknowledgements}{Acknowledgements}
\newcommand{\lk}{\operatorname{lk}}
\newcommand{\sign}{\operatorname{sign}}
\newcommand{\wt}{\widetilde}
\newcommand{\wh}{\widehat}
\newcommand{\ol}{\overline}
\newcommand{\sm}{\setminus}
\newcommand{\Z}{\mathbb{Z}}
\newcommand{\R}{\mathbb{R}}
\newcommand{\Id}{\operatorname{Id}}
\newcommand{\Hom}{\operatorname{Hom}}
\newcommand{\pt}{\operatorname{pt}}
\newcommand{\Int}{\operatorname{Int}}
\newcommand{\vspan}{\operatorname{span}}
\newcommand{\Pow}{\mathcal{M}}
\begin{document}
\title{Triple linking numbers and surface systems}

\author[C. W. Davis]{Christopher W.\ Davis}
\address{Department of Mathematics, University of Wisconsin--Eau Claire, USA}
\email{daviscw@uwec.edu}
\urladdr{people.uwec.edu/daviscw}

\author[M. Nagel]{Matthias Nagel}
\address{Department of Mathematics \& Statistics, McMaster University, Canada}
\email{nagel@cirget.ca}

\author[P. Orson]{Patrick Orson}
\address{Department of Mathematics, Boston College, USA}
\email{patrick.orson@bc.edu}

\author[M. Powell]{Mark Powell}
\address{D\'{e}partement de Math\'{e}matiques, Universit\'{e} du Qu\'{e}bec  \`{a} Montr\'{e}al, Canada}
\email{mark@cirget.ca}

\begin{abstract}
We give a refined value group for the collection of triple linking numbers of links in the $3$--sphere. Given two links with the same pairwise linking numbers we show that they have the same refined  triple linking number collection if and only if the links admit homeomorphic surface systems. Moreover these two conditions hold if and only if the link exteriors are bordant over~$B\Z^n$, and if and only if the third lower central series quotients $\pi/\pi_3$ of the link groups are isomorphic preserving meridians and longitudes.   We also show that these conditions imply that the link groups have isomorphic fourth lower central series quotients~$\pi/\pi_4$, preserving meridians.
\end{abstract}

\maketitle
\section{Introduction}

In this article, all links $L\subset S^3$ are $n$--component, ordered and oriented. The triple linking numbers $\overline{\mu}_L(ijk)$ are a measure of higher linking for $L$, introduced by Milnor~\cite{Milnor:1957-1}. Any given triple linking number $\overline{\mu}_L(ijk)$ is an integer, well-defined only up to the greatest common divisor $\Delta_L(ijk)$ of the pairwise linking numbers involving the link components labelled $i$, $j$ and $k$. We define a \emph{surface system} for the link~$L = K_1 \cup \cdots \cup K_n$ as a collection $\Sigma$ of
embedded, oriented (possibly disconnected) surfaces~$\Sigma_i = \Sigma_{K_i}$ in $S^3$ with~$\partial \Sigma_i = K_i$, that intersect one another transversally and in at most triple points.
Mellor and Melvin~\cite{Mellor03} derived a geometric method for computing the triple linking numbers as a difference of auxiliary quantities $m_{ijk}(\Sigma), t_{ijk}(\Sigma)\in \Z$.

Given a surface system $\Sigma$, for $1\leq i<j<k\leq n$ we collect the differences $m_{ijk}(\Sigma)-t_{ijk}(\Sigma)$ into an $\binom{n}{3}$--tuple $\mu(L)$.  We construct a certain quotient $\Pow$ of~$\Z^{\binom{n}{3}}$, called the \emph{total Milnor quotient}  (Definition \ref{defn:total-Milnor-invariant}), only depending on the pairwise linking numbers.
In this quotient the invariant $\mu(L)$ is defined independently of $\Sigma$ (Theorem~\ref{thm:MilnorWellDef}). Moreover, taking $\mu(L)$ in $\Pow$, rather than the classical indeterminacy group $\prod_{i<j<k}\Z/\Delta_L(ijk)$, in general strictly refines the Milnor invariants.
The main result of this paper is the following series of characterisations.

\begin{theorem}\label{thm:Main-intro}
Suppose that~$L$ and~$L'$ have the same pairwise linking numbers.  Then the following are equivalent:
\begin{enumerate}[(a)]
\item \label{item:main-thm-1} The links $L$ and $L'$ admit homeomorphic surface systems.
\item\label{item:main-thm-3} The collections of triple linking numbers $\mu(L)$ and $\mu(L')$ are equal in the total Milnor quotient~$\mathcal{M}$.
\item\label{item:main-thm-2} The link exteriors~$X_L$ and~$X_{L'}$, together with their canonical homotopy classes of maps to~$B\Z^n$, are bordant rel.\ boundary over~$B\Z^n$.
\item\label{item:main-thm-4} There exist basings for $L$ and $L'$ and
an isomorphism~$\pi_1(X_L)/\pi_1(X_L)_3 \xrightarrow{\simeq} \pi_1(X_{L'})/\pi_1(X_{L'})_3$ of the lower central series quotients that sends the ordered, oriented meridians of $L$ to those of $L'$,
and the ordered, oriented zero-framed longitudes of $L$ to those of $L'$.
\end{enumerate}
\end{theorem}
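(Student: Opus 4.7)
The plan is to establish the equivalences through a cycle $(a) \Rightarrow (c) \Rightarrow (d) \Rightarrow (b) \Rightarrow (a)$, with the bordism condition (c) serving as the topological pivot. For $(a) \Rightarrow (c)$, I would construct a bordism $W$ from a homeomorphism $\Sigma \to \Sigma'$ of surface systems. Pushing each surface system into the interior of $B^4$ yields a $4$-manifold whose boundary contains a copy of the link exterior ($X_L$ or $X_{L'}$ respectively); gluing the exteriors of these pushed-in surface systems via the given homeomorphism of regular neighbourhoods produces $W$ with $\partial W = X_L \sqcup -X_{L'}$, and the canonical map to $B\Z^n = T^n$ extends because it is determined on fundamental groups by the meridians, which are identified consistently by the gluing.

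For $(c) \Rightarrow (d)$, I would apply Stallings' theorem on lower central series quotients to the inclusions $X_L \hookrightarrow W$ and $X_{L'} \hookrightarrow W$. The homological hypotheses ($H_1$ isomorphism and $H_2$ surjection) should be verified using a Mayer--Vietoris computation on $W$ together with the fact that $X_L \to B\Z^n$ always induces an isomorphism on $H_1$. Stallings then produces compatible isomorphisms $\pi_1(X_L)/\pi_1(X_L)_k \cong \pi_1(W)/\pi_1(W)_k \cong \pi_1(X_{L'})/\pi_1(X_{L'})_k$ for every $k$ --- specialising to $k=3$ gives (d), and the case $k=4$ furnishes the additional claim of the abstract --- with meridians and zero-framed longitudes tracked through the inclusions since the bordism is rel.\ boundary. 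For $(d) \Rightarrow (b)$, the Magnus expansion expresses each $\overline{\mu}_L(ijk)$ in terms of the image of the $i$-th longitude in $\pi_1(X_L)/\pi_1(X_L)_3$ rewritten in the meridional generators; the isomorphism of (d) therefore equates the two triple linking number collections modulo the classical indeterminacy, and promoting this to equality in the refined quotient $\mathcal{M}$ should follow formally from Definition~\ref{defn:total-Milnor-invariant}.

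Finally, for $(b) \Rightarrow (a)$, I would begin with arbitrary surface systems for $L$ and $L'$ and perform a sequence of geometric moves --- stabilising individual components by handle addition, tubing along double curves, and introducing cancelling triple points --- to match up the combinatorial types. The relations defining $\mathcal{M}$ are by design those realised by such moves, so the equality $\mu(L) = \mu(L')$ in $\mathcal{M}$ supplies exactly the algebraic licence needed to perform the matching. This last step is the main obstacle I anticipate: it demands a careful verification that the geometric moves generate \emph{precisely} the relations of $\mathcal{M}$, and that once all Mellor--Melvin data and combinatorial invariants (genera, numbers of double curves, triple-point counts) are made to agree, the surface systems are actually abstractly homeomorphic rather than merely sharing all tabulated invariants. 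The Stallings step in $(c) \Rightarrow (d)$ is the second most delicate, requiring tight control of the bordism's low-dimensional homology.
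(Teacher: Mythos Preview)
Your cycle differs from the paper's, and two of the four steps have genuine gaps. For $(c)\Rightarrow(d)$ via Stallings, the hypothesis that $X_L \hookrightarrow W$ induces a surjection on $H_2$ is not satisfied by a general bordism over $B\Z^n$, and cannot simply be assumed. Even after trading $1$-- and $3$--handles for $2$--handles (as in Lemma~\ref{lemma-W-only-2-handles}), the attaching circles are only null\emph{homologous} in $X_L$: each such $2$--handle creates $H_2(W)$ not hit by $H_2(X_L)$, and may kill a nontrivial element of $\pi_1(X_L)_2/\pi_1(X_L)_3$, so $\pi_1(X_L)/\pi_1(X_L)_3 \to \pi_1(W)/\pi_1(W)_3$ need not be injective. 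In dimension~$4$ you cannot routinely surger the excess $H_2$ away. The paper avoids this by proving $(c)\Rightarrow(a)$ instead, sweeping a surface system through the $2$--handle bordism (Theorem~\ref{thm:2->1}), and then obtaining $(d)$ from the C-complex combinatorics of $(a)$ by computing longitude words explicitly modulo $F_3$ (Theorem~\ref{thm:SameWords}, Corollary~\ref{cor:homeo-C-cx-implies-pi-mod-pi-3}).

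Your direct $(b)\Rightarrow(a)$ is, as you suspect, the real obstacle, and I do not see how to close it along the lines you sketch: showing that local moves act transitively on surface systems with fixed total Milnor invariant is essentially the theorem itself, and matching ``all tabulated invariants'' does not force abstract homeomorphism. The paper routes through bordism instead: for $(b)\Rightarrow(c)$ it normalises clasp-words by finger moves and tubing (Lemma~\ref{lem:MakeBoundaryAlike}), then uses torus sums to realise the indeterminacy vectors $v_{s,r}$ integrally (Lemma~\ref{lem:BoundarySum}, Theorem~\ref{thm:3->2}), so that triple-point counts agree on the nose and Theorem~\ref{thm:FillableIntersection} gives the null-bordism; then $(c)\Rightarrow(a)$ is the sweeping argument. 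Your $(d)\Rightarrow(b)$ is essentially correct and matches Proposition~\ref{prop:mijk = magnus} and Theorem~\ref{thm:LCS converse}, though the promotion to the refined quotient $\Pow$ is a genuine computation rather than a formality; your $(a)\Rightarrow(c)$ via pushing surfaces into $B^4$ is unclear as written, whereas the paper glues the exteriors along aligned boundary collars and reads off null-bordism from the cancelling triple points.
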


It is a direct consequence of the geometric calculation method of~\cite{Mellor03} that links admitting homeomorphic surface systems have the same pairwise linking and triple linking numbers. The equivalence of (\ref{item:main-thm-1}) and (\ref{item:main-thm-3}) can be thought of as confirming the converse when the Milnor invariants are taken in the refined value group $\Pow$. We note that it is not an original observation that Milnor invariants, when collected together, are well-defined in a more subtle value group than $\prod_{i<j<k}\Z/\Delta_L(ijk)$. Larger value groups were derived in~\cite{Levine-homotopy-classn} and also~\cite{HL90,HL98}, as we discuss below.
The precise notion of bordism rel.\ boundary over $B\Z^n$ is introduced in Section~\ref{section:bordism-rel-boundary}.

As a result of Theorem \ref{thm:Main-intro} (\ref{item:main-thm-4}), we also obtain a statement about the fourth lower central series quotients:

\begin{theorem}\label{theorem:lower-central-series-intro}
Suppose that $L$ and $L'$ satisfy the conditions in Theorem~\ref{thm:Main-intro}.
Then there is an isomorphism~$\pi_1(X_L)/\pi_1(X_L)_4 \cong \pi_1(X_{L'})/\pi_1(X_{L'})_4$ between the lower central series quotients that preserves the free homotopy classes of the oriented, ordered meridians.
\end{theorem}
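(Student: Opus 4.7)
The plan is to reduce the conclusion to Milnor's classical theorem presenting the nilpotent quotients of a link group in terms of its lower Milnor invariants, and to extract the necessary triple--linking data from Theorem~\ref{thm:Main-intro}.

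First I would extract the triple linking data. By Theorem~\ref{thm:Main-intro}\,(\ref{item:main-thm-3}), we have $\mu(L)=\mu(L')$ in the refined total Milnor quotient~$\Pow$. Pushing forward along the natural surjection $\Pow \twoheadrightarrow \prod_{i<j<k}\Z/\Delta_L(ijk)$ onto the classical indeterminacy group, $L$ and $L'$ share the same triple linking numbers in the classical sense. Their pairwise linking numbers agree by hypothesis, so all Milnor invariants of length at most three agree modulo their classical indeterminacies.

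The main step is to apply Milnor's presentation theorem~\cite{Milnor:1957-1}: for any basing of~$L$, there is a presentation
\[
\pi_1(X_L)/\pi_1(X_L)_4 \;\cong\; F\bigl/\bigl(\langle [x_1,w_1^L],\ldots,[x_n,w_n^L]\rangle^F \cdot F_4\bigr),
\]
where $F = F\langle x_1,\ldots,x_n\rangle$ is free on meridional generators and $w_i^L \in F$ is a word representing the $i$-th longitude whose image in $F/F_3$ is determined by the pairwise and triple linking numbers of $L$, up to the classical indeterminacies. Using the equality of these invariants established above, I would choose basings and lifts so that $w_i^L = w_i^{L'} c_i$ with $c_i \in F_3$. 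The commutator identity
\[
[x_i,w_i^{L'}c_i] = [x_i,w_i^{L'}]\cdot w_i^{L'}[x_i,c_i](w_i^{L'})^{-1},
\]
combined with the inclusion $[F,F_3]\subseteq F_4$, shows that $[x_i,w_i^L] \equiv [x_i,w_i^{L'}]$ modulo the normal closure of the $L'$-relations together with $F_4$. Hence the identity map on $F$ descends to an isomorphism $\pi_1(X_L)/\pi_1(X_L)_4 \to \pi_1(X_{L'})/\pi_1(X_{L'})_4$ sending each meridional generator to its counterpart, in particular preserving free homotopy classes of the ordered, oriented meridians.

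The main obstacle is the careful matching of indeterminacies: different choices of basing and of lift of the longitude to a word in $F$ alter $w_i^L$ modulo $F_3$, and one must verify that this freedom corresponds exactly to the classical triple--linking indeterminacy $\Z/\Delta_L(ijk)$, so that equality in the latter guarantees compatible choices for both links. An alternative, less direct route would be to surger the bordism from Theorem~\ref{thm:Main-intro}\,(\ref{item:main-thm-2}) so that the inclusions of both link exteriors become $\pi/\pi_4$-isomorphisms via Stallings' theorem, but verifying the required $H_2$-surjectivity after surgery appears more delicate than the algebraic argument above.
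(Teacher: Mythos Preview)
There is a genuine gap at exactly the point you identify as ``the main obstacle.'' The claim that the image of $w_i^L$ in $F/F_3$ is determined by the classical triple linking numbers, i.e.\ that the freedom under change of basing corresponds to $\Z/\Delta_L(ijk)$, is false. By Proposition~\ref{prop:mijk = magnus} the Magnus coefficients of the longitude word satisfy $e_{ij}(\ell_k)=m_{ijk}(\Sigma)-\lk(L_k,L_j)\lk(L_i,L_j)$, and by Lemma~\ref{lem:IndetermineVectors} the effect of moving base points on the collection $\{m_{ijk}\}$ is exactly the span of the vectors $v_{s,r}$, i.e.\ the \emph{refined} indeterminacy defining~$\Pow$, not the classical one. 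Since Proposition~\ref{prop:improved} exhibits links with identical classical $\bar\mu(ijk)$ but distinct images in~$\Pow$, and since Theorem~\ref{thm:Main-intro}(\ref{item:main-thm-3})$\Leftrightarrow$(\ref{item:main-thm-4}) shows that equality in~$\Pow$ is precisely what is needed to align the longitudes in $\pi/\pi_3$, your projection to the classical quotient throws away information you cannot recover.

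The paper's route is to start instead from condition~(\ref{item:main-thm-1}) in its C-complex form and prove geometrically (Theorem~\ref{thm:SameWords}) that the longitude words can be read off from the combinatorics of the C-complex, so that homeomorphic C-complexes force $\ell_k\equiv\ell'_k\bmod F_3$; your commutator identity then finishes the job exactly as you wrote. A shorter fix, since you have all of Theorem~\ref{thm:Main-intro} available, is to use condition~(\ref{item:main-thm-4}) directly: the isomorphism $\pi(L)/\pi(L)_3\cong\pi(L')/\pi(L')_3$ preserving both meridians and longitudes implies $\ell_k(\ell'_k)^{-1}$ lies in the kernel of $F/F_3\to\pi/\pi_3$, hence is a product of the relators $[\mu_j,\ell_j]$ modulo $F_3$. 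Then $[\mu_k,\ell_k]$ and $[\mu_k,\ell'_k]$ differ modulo $F_4$ only by commutators $[\mu_k,[\mu_j,\ell_j]]$, which already lie in the normal closure of the relators, and the two presentations of $\pi/\pi_4$ coincide.
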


This theorem is directly analogous to the result, which follows from a well-known argument of Milnor~\cite[proof of Theorem 4]{Milnor:1957-1}, that equality of pairwise linking numbers implies an isomorphism between the lower central series quotients~$\pi_1(X_L)/\pi_1(X_L)_3 \cong \pi_1(X_{L'})/\pi_1(X_{L'})_3$ that preserves the free homotopy classes of the oriented, ordered meridians. Details of all these arguments are given in Section~\ref{section:lower-central-series-quotients}.

\subsection{A refinement for the collection of triple linking numbers}

Let~$\Sigma$ be a surface system for a link~$L=K_1\cup\cdots\cup K_n$. The two integers~$m_{ijk}(\Sigma)$ and $t_{ijk}(\Sigma)$ defined by Mellor and Melvin depend both on a choice of surface system for the link and a choice of base point for each link component.

The integers~$t_{ijk}(\Sigma)$ are the signed count of triple intersection points
in the surfaces for the link components $K_i$, $K_j$ and $K_k$. The integers $m_{ijk}(\Sigma)$ are determined by the \emph{clasp-words}.
These are words, one for each
component~$K_m$ of $L$, in the labels of the link components, that record the order in which the component $K_m$ intersects surfaces in a surface system for the link $L$, starting from some chosen base point of~$K_m$.
Precise details, including how to produce the integers $m_{ijk}(\Sigma)$ from the clasp-words, are given in Section~\ref{section:milnor-numbers}.

Indeterminacy in the differences $m_{ijk}(\Sigma)-t_{ijk}(\Sigma)$ arises from two sources.
Firstly, the choice of  surface system for the link $L$, which we deal with in Section~\ref{section:indet-surface-systems}.
Secondly, the choice of base points used to read off the clasp-words in the computation of the $m_{ijk}(\Sigma)$, examined in Section~\ref{section:basepoints}.
A change in choice of surfaces, or a change in the choice of base points, produces a change on several of the integers $m_{ijk}(\Sigma) - t_{ijk}(\Sigma)$ simultaneously.
This led us to look at the \emph{$\binom{n}{3}$--tuple} of integers $\{m_{ijk}(\Sigma) - t_{ijk}(\Sigma)\}_{i<j<k}$. We take its image in the quotient $\Pow$ of $\Z^{\binom{n}{3}}$ by linear combinations of the \emph{indeterminacy elements}, defined in Lemma~\ref{lem:IndetermineVectors}, which are geometrically motivated and depend on the linking numbers.

In Example~\ref{remark:total-quotient-is-a-massive-emphatic-awesome-refinement} we show the following.

\begin{proposition}\label{prop:improved}For $4$--component links with all linking numbers equal to 1, there is an isomorphism $\Pow \cong \Z$ and every integer in this quotient is realised by a link.
\end{proposition}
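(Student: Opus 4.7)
The plan is to verify the isomorphism $\Pow \cong \Z$ directly by listing a generating set for the indeterminacy subgroup $\Lambda \subset \Z^{\binom{4}{3}} = \Z^4$, and then to realise each integer class by an explicit family of links. Ordering coordinates on $\Z^4$ by the triples $(123),(124),(134),(234)$, I would first apply Lemma~\ref{lem:IndetermineVectors} with $n=4$ and all pairwise linkings $\ell_{ij}=1$ to read off the indeterminacy vectors explicitly. With the linking numbers specialised to $1$, each indeterminacy element simplifies to a small integer vector whose entries depend only on which component's surface or basepoint is varied. After collecting these vectors into a presentation matrix, the cokernel can be computed by row and column operations (Smith normal form), and the claim is that it is infinite cyclic, generated by the class of a standard basis vector, say $[e_{123}]$.

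Second, to realise every integer class in $\Pow$, I would start from a base four-component link $L_0$ with pairwise linkings all $1$ and $\mu(L_0) = 0$: a natural candidate is four parallel copies of a Hopf fibre on $S^3$, whose standard surface system consists of four disks meeting in pairs with intersection number $1$ and no triple points. From $L_0$ I would perform a local modification inside a small ball meeting three of the four components, such as a Borromean insertion or a Milnor-type tangle substitution preserving all pairwise linkings. Using the Mellor--Melvin formula, this modification introduces controllable changes to the clasp-word contributions $m_{ijk}(\Sigma)$ and the triple intersection count $t_{ijk}(\Sigma)$. Iterating the modification $N$ times then produces a sequence of links $L_N$ whose $\mu$-values in $\Pow$ advance by one generator each time, realising every integer.

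The algebraic step is routine once the indeterminacy vectors are written down, so the main obstacle lies in the realisation step. The local modification used to shift $\mu$ must be chosen so that, after passing to the quotient by $\Lambda$, the net effect on the ordered tuple $\{m_{ijk} - t_{ijk}\}$ matches precisely one generator of $\Pow$, rather than a nonzero multiple or a class trivialised by the indeterminacy. This requires careful bookkeeping of the clasp words along each of the four link components after the modification, together with any new triple intersections introduced, and this is where I expect the bulk of the technical care to be required.
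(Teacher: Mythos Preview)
Your plan matches the paper's proof. The paper computes the indeterminacy vectors $v_{s,r}$ for $n=4$ with all linking numbers~$1$, assembles them into a $4\times 6$ matrix whose cokernel is $\Z$ via $(x_1,x_2,x_3,x_4)\mapsto x_1-x_2+x_3-x_4$, and then realises every integer by banding three components of a simple all-linking-one link into a (cabled) copy of the Borromean rings and computing the resulting clasp-words explicitly; your Borromean-insertion idea is exactly this construction, and the paper even notes in Remark~\ref{remark:rank-of-M} that iterated Borromean banding suffices as an alternative to the cabling.
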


In contrast to this, when all linking numbers are 1, the classical indeterminacies~$\Delta_L(ijk)=1$ for all triple indices $ijk$ so the classical value group is trivial. Thus Proposition~\ref{prop:improved} shows the total Milnor invariant is in general a refinement for the classical indeterminacy.

In the case of $4$--component links and non repeating Milnor invariants of length up to and including $4$, the type of refined indeterminacy captured by our $\Pow$ was previously considered by Levine~\cite{Levine-homotopy-classn}. In this special case of $4$--component links, our indeterminacy elements recover the indeterminacy given by Levine's automorphisms $\phi_{r,s}$~\cite[p.~373]{Levine-homotopy-classn}; cf.\ the first three columns of \cite[Table~1]{Levine-homotopy-classn}.
It also seems likely that our refinement could also be extracted from the \emph{universal Milnor invariant} of Habegger and Lin~\cite{HL90,HL98}.   However, in their own words, the ``complicated nature'' of certain features of their algebraic approach ``conspire to make it difficult, if not impossible, to find a complete set of invariants'' for their value group~\cite[p.~414]{HL90}.

We suggest the reader attempts a calculation of triple linking numbers using the Mellor-Melvin formulation, as in Example~\ref{example:example-link}, in order to appreciate the ease with which the triple linking numbers can be computed by constructing a surface system with double intersections only, and reading off clasp-words. Such a surface system always exists; see Section \ref{section:C-complexes}. Trying to apply the Mellor-Melvin formulation to a generic surface system that includes triple intersection points can be a task for the more intrepid geometric topologist. But, while harder to actually calculate, the triple intersection numbers are more obviously related to part (\ref{item:main-thm-2}) of Theorem~\ref{thm:Main-intro}, the bordism side of the problem.

\subsection{Bordism rel.\ boundary over \texorpdfstring{$B\Z^n$}{BZn}}\label{section:bordism-rel-boundary}

Consider two $3$--manifolds $X_1,X_2$ with homeomorphisms $g_i \colon \coprod_n S^1 \times S^1 \xrightarrow{\cong} \partial X_i$, for $i=1,2$, and homotopy classes of maps $f_i \in [X_i,B]$, for some space $B$ and for~$i=1,2$.  For the exterior~$X_i$ of an oriented, ordered $n$--component link and~$B=B\Z^n$, the oriented meridians and the zero-framed longitudes determine $g_i$ up to isotopy and $f_i$ up to homotopy. The pairs~$(X_1,f_1)$ and $(X_2,f_2)$ are said to be \emph{bordant rel.\ boundary over $B$} if there exists a 4--manifold $W$ with boundary~$M := - X_1\cup_{g_2 \circ g_1^{-1}} X_2$ and a map~$F \colon W \to B$ such that~$F|_{X_i} \sim f_i$ for $i=1,2$.

To characterise when two knot exterior pairs~$(X_1,f_1)$ and~$(X_2,f_2)$ are bordant rel.\ boundary over $B\Z^n$, we first use the $g_i$ to create the closed~$3$--manifold~$M = -X_1 \cup X_2$, as above, and then attempt to glue the maps $f_1$ and $f_2$ accordingly, in order to analyse the pair~$(M,f_1\cup f_2)$ in the bordism group~$\Omega_3(B\Z^n)$. However, while some choice of map-gluing can always be made, the homotopy classes of~$f_1$ and~$f_2$ do not determine a \emph{unique} homotopy class of a map $f_1\cup f_2\in[M,B]$. This subtlety is closely related to the indeterminacies in the triple linking numbers, so next we indicate the extra structure required to glue the maps in a well-defined way.

Let $X$ be a $3$--manifold with boundary~$\Sigma$.  Fix some space~$B$ and a continuous map~$\phi \colon \Sigma \to B$, and suppose that~$X$ comes equipped with a parametrisation of its boundary,
namely a homeomorphism~$g \colon \Sigma \to \partial X$. A \emph{bordered $B$--structure} on~$(X,\Sigma, g,\phi)$ is
a map~$f \colon X \to B$ together with a
homotopy~$H \colon f \vert_{\partial X} \circ g \sim \phi$, recording the fact that the diagram below commutes up to homotopy:
\[
\begin{tikzcd}
 \partial X \ar[r]  & X \ar[d,"f"] \\
\Sigma \ar[r, "\phi"] \ar{u}{g}[swap]{\cong} & B.
\end{tikzcd}
\]

It is the choice of $H$ in the bordered $B$--structure that enables us to glue homotopy classes of maps together in a well-defined fashion. More precisely we have the following. We say that two bordered $B$--structures~$(f, H)$ and $(f', H')$ are \emph{homotopic}, if there exists a homotopy~$F \colon f \sim f'$, and a
homotopy~$\Phi \colon F \vert_{\partial X \times I} \circ (g\times \Id) \sim \phi \circ \operatorname{pr}_{\Sigma}$ between the two maps~$\Sigma \times I \to B$
such that $\Phi \vert_{ (\Sigma \times \{0\}) \times I } = H$ and
$\Phi \vert_{ (\Sigma \times \{1\}) \times I } = H' $.
Given two $3$--manifolds~$X_1, X_2$ with
bordered $B$--structures~$(f_1, H_1), (f_2,H_2)$,
we can construct~$M = - X_1 \cup_{g_1 \circ g_2^{-1}} X_2$ and a
map~$f = f_1 \cup f_2$. We now have enough structure so that the homotopy class of~$f$ only depends
on the homotopy classes of bordered $B$--structures~$(f_1, H_1), (f_2,H_2)$.
Furthermore, if we restrict the new map~$f$, we recover~$f \vert_{X_i} \sim f_i$ the former maps~$f_i$, for~$i=1,2$.

We have already noted that a link exterior $X_L$ comes equipped with canonical data $(X_L,\Sigma,g)$ and $f\in[X_L,B\Z^n]$, and in fact the map $\phi\colon \Sigma\to B\Z^n=T^n$ is also canonically determined, by the linking numbers of $L$. So we see that to equip a link complement~$X_L$ with a bordered $B\Z^n$--structure, we need only choose the homotopy~$H \colon f \vert_{\partial X_L} \circ g \sim \phi$. However, there is no preferred choice. Understanding the relationship between this choice and the triple linking numbers was a key step in proving Theorem~\ref{thm:Main-intro}.

We contrast this with the case of $3$--manifolds with empty boundary.
Here the gluing indeterminacy is not a feature, and
a result similar to Theorem~\ref{thm:Main-intro} was already obtained
by Cochran, Gerges and Orr~\cite[Theorem 3.1]{Cochran01}.
One might be tempted to try and directly relate our result to theirs by closing up the link exteriors with solid tori. However, for a link~$L$ with non-vanishing linking numbers, the canonical
map~$X_L \to B\Z^n$ does not extend over any filling of
the boundary tori with solid tori, so the results are not related in this way.

\subsection{Lower central series quotients}\label{section:lowercentral}

Recall that the lower central series of a group $G$ is a descending sequence of subgroups defined iteratively by $G_1 := G$ and~$G_n := [G,G_{n-1}]$. In Section \ref{section:lower-central-series-quotients}, we consider the lower central series of the link group~$\pi_1(X_L)$. We will recall two well-known results about lower central series quotients and pairwise linking numbers, and show how to prove the analogous results one lever further down the series using triple linking numbers.

The first well-known result is that the pairwise linking numbers of two links $L$ and $L'$ are the same if and only if the lower central series quotients~$\pi(L)/\pi(L)_2$ and~$\pi(L')/\pi(L')_2$ are isomorphic via an isomorphism that sends meridians to meridians and longitudes to longitudes. When there is equality of pairwise linking numbers, the characterisation Theorem \ref{thm:Main-intro} (\ref{item:main-thm-4}) says that the precisely analogous isomorphism of the third lower central series quotients holds if and only if the refined triple linking numbers agree.

The second well-known result (which follows from an argument recalled in Theorem \ref{thm:milnor-thm-4}) is that, given equality of pairwise linking numbers and a choice of oriented, ordered meridians for $L$ and $L'$, the lower central series quotients~$\pi(L)/\pi(L)_3$ and~$\pi(L')/\pi(L')_3$ are isomorphic, via an isomorphism that preserves the meridians. Theorem \ref{theorem:lower-central-series-intro} is a consequence of the appearance of (\ref{item:main-thm-4}) in Theorem~\ref{thm:Main-intro}, and proves the analogue for the refined triple linking numbers.

\subsection{C-complexes}\label{section:C-complexes}

An important concept motivating this article, which does not appear in the statement of Theorem~\ref{thm:Main-intro}, is that of a \emph{C-complex}. A C-complex is a surface system that consists of Seifert surfaces and only has \emph{clasp} intersections~\cite{Cooper82,Cimasoni04,Cimasoni-Florens}.  A clasp is a double point arc that has end points on distinct link components, shown in Figure~\ref{fig:ClaspIntersection}.  More details are given in Section~\ref{section:surface-systems}.  As mentioned above, C-complexes always exist and are often a very useful computational tool; see e.g.~\cite{Cimasoni-Florens, Mellor03}.

In the special cases that the linking numbers of a link $L$ are zero, or that $n=2$, the triple linking numbers $\overline{\mu}_L(ijk)$ are well-defined as integers. In these cases, it was proven by Davis and Roth~\cite{Davis16} that two links admit homeomorphic C-complexes if and only if their linking and triple linking numbers agree. They then asked~\cite[Question~1]{Davis16} about the generalisation to links with nonzero linking numbers,  which the following corollary to Theorem~\ref{thm:Main-intro} answers.

\begin{corollary}\label{cor-main-C-complexes}
  Suppose that~$L$ and~$L'$ have the same pairwise linking numbers. Then the links $L$ and $L'$ admit homeomorphic C-complexes if and only if the collections of triple linking numbers $\mu(L)$ and $\mu(L')$ are equal in the total Milnor quotient~$\mathcal{M}$.
\end{corollary}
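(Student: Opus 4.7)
The forward direction is immediate: a C-complex is by definition a surface system, so homeomorphic C-complexes give homeomorphic surface systems, and therefore $\mu(L)=\mu(L')$ in $\mathcal{M}$ by the equivalence \ref{item:main-thm-1} $\Leftrightarrow$ \ref{item:main-thm-3} of Theorem~\ref{thm:Main-intro}.

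For the converse, the plan is to start from the homeomorphic surface systems $\Sigma$ for $L$ and $\Sigma'$ for $L'$ provided by Theorem~\ref{thm:Main-intro}, and upgrade them to homeomorphic C-complexes via local moves. A generic surface system differs from a C-complex in three respects: the surface~$\Sigma_i$ may be disconnected, the intersection pattern may include triple points, and the double-point set may contain closed circles or arcs with both endpoints on the same link component. I would describe a sequence of geometric moves---tubing together the components of each $\Sigma_i$, surgering closed intersection circles, converting non-clasp double-point arcs into clasp arcs by local boundary finger moves, and resolving each triple point into a controlled cluster of clasps---that turn any surface system into a C-complex. Because each move is supported in a small ball or tube determined by a purely local combinatorial feature of the surface system, the same move sequence can be transported across the given homeomorphism $\Sigma \cong \Sigma'$, and the resulting C-complexes are then homeomorphic by construction.

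The main obstacle will be the triple-point resolution: trading a triple point for clasps requires a careful local model that simultaneously adjusts the double-point arc pattern and possibly the surface genus, and one must verify that performing identical local modifications on the two sides really yields \emph{homeomorphic} C-complexes, rather than merely C-complexes of the same combinatorial type. An equivalent route to sidestep the local geometry would be to prove a C-complex analogue of Theorem~\ref{thm:Main-intro} directly, by showing that every indeterminacy vector appearing in Lemma~\ref{lem:IndetermineVectors} can already be realised by a move between two C-complexes for a fixed link. That would identify the C-complex indeterminacy quotient with $\mathcal{M}$, and the corollary would follow immediately from Theorem~\ref{thm:Main-intro}.
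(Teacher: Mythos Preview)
Your forward direction is the same as the paper's and is correct.

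For the converse, your plan has a genuine gap. The conversion of a surface system into a C-complex is not a purely intrinsic operation on the abstract $2$--complex: the standard procedure (e.g.\ \cite[Lemma~1]{Cimasoni04}, also invoked in Lemma~\ref{lem:Subordinate}) consists of finger moves that push one surface along arcs in another, and the outcome depends on how those arcs sit in $S^3$ relative to the rest of the system. The homeomorphism $\Sigma\cong\Sigma'$ you start from is only a homeomorphism of abstract surface systems, not an ambient homeomorphism, so ``transporting the same move sequence across the homeomorphism'' is not well-defined. Concretely, a tube joining two components of $\Sigma_i$, or a finger resolving a ribbon arc, must be routed through the ambient $3$--sphere; different routings produce different intersection patterns with the remaining surfaces, and nothing in the abstract homeomorphism type dictates a canonical routing. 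Your triple-point worry is a symptom of this, but the same issue already arises for the simpler moves.

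The paper sidesteps all of this by going through item~(\ref{item:main-thm-2}) rather than item~(\ref{item:main-thm-1}). From $\mu(L)=\mu(L')$ one obtains a bordism rel.\ boundary over $B\Z^n$, which by Lemma~\ref{lemma-W-only-2-handles} and Corollary~\ref{cor:surgery-along-curves} can be taken to consist of $2$--handles attached along curves in $X_L$ that have zero linking number with $L$. A C-complex for $L$ can be isotoped off these curves, and then swept unchanged through the bordism to give a homeomorphic C-complex for $L'$; this is Remark~\ref{rem:CSweep}. No local resolution of triple points or ribbons is needed, because one never leaves the category of C-complexes.
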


\begin{proof}
If two links have homeomorphic C-complexes, then they trivially have homeomorphic surface systems.  If two links admit homeomorphic surfaces systems, then their pairwise linking numbers coincide, and by Theorem~\ref{thm:Main-intro} they are bordant over~$B\Z^n$.
The proof of Theorem~\ref{thm:2->1}, that one can isotope a surface system through that bordism
from one exterior to the other, also works for C-complexes; see Remark~\ref{rem:CSweep}.
\end{proof}

\subsection{Outline of the proof of Theorem~\ref{thm:Main-intro}}

We give a summary of our strategy in the proof of Theorem~\ref{thm:Main-intro}. The equivalences are proved as (\ref{item:main-thm-1})$\implies$(\ref{item:main-thm-3})$\implies$(\ref{item:main-thm-2})$\implies$(\ref{item:main-thm-1})$\implies$(\ref{item:main-thm-4}) $\implies$(\ref{item:main-thm-3}). We will also explain how to directly obtain (\ref{item:main-thm-1})$\implies$(\ref{item:main-thm-2}), as this explanation helps in understanding the other stages.

First we consider the implication (\ref{item:main-thm-1})$\implies$(\ref{item:main-thm-3}).
Suppose that $L$ and $L'$ bound homeomorphic surface systems $\Sigma$ and $\Sigma'$.  The linking numbers, the clasp-words and the count of triple intersection points, are all preserved by the homeomorphism between the surface systems.   Thus the total Milnor invariant $\mu(L)$ of $L$ agrees with the total Milnor invariant $\mu(L')$ of $L'$, and we see that  (\ref{item:main-thm-1})$\implies$(\ref{item:main-thm-3}) follows fairly easily from the definitions.

Next we explain why (\ref{item:main-thm-1})$\implies$(\ref{item:main-thm-2}).
Elements $(M,f)$ of the bordism group~$\Omega_3(B\Z^n)$ are detected by taking preimages under~$f$ of codimension $3$ sub-tori in the model $(S^1)^n \simeq B\Z^n$. The preimages are points, and the algebraic count of these points gives rise to $\binom{n}{3}$ integers that determine whether two 3--manifolds are $B\Z^n$--bordant, as proven in Theorem~\ref{thm:FillableIntersection}.  Let $X_L := S^3 \sm \nu L$ be the exterior of $L$, that is the complement of a regular neighbourhood $\nu L$ of~$L$.
A surface system gives rise to a map $X_L \to B\Z^n$,
produced in Construction~\ref{const:CollapseMap}, which follows the Pontryagin-Thom collapse construction.
If two links $L$ and $L'$ have homeomorphic surface systems, then after an isotopy of the surface systems, the resulting maps $X_L \to B\Z^n$ and $X_{L'} \to B\Z^n$ agree on the boundaries.  Thus the link exteriors can be glued together over $B\Z^n$.  This glues the surface systems together too.   The preimages detecting $\Omega_3(B\Z^n)$ are the triple intersection points of the surface system.  The triple intersection points cancel algebraically, because the gluing reverses orientations on one of these systems, so the link exteriors of $L$ and $L'$ are bordant rel.\ boundary.

Now we consider the converse, namely (\ref{item:main-thm-2})$\implies$(\ref{item:main-thm-1}).  This is proved in Theorem~\ref{thm:2->1}.
We recall in Lemma~\ref{lemma-W-only-2-handles} how to modify a bordism rel.\ boundary from $X_L$ to $X_{L'}$ over~$B\Z^n$ to a different bordism by replacing $1$--handles
with $2$--handles as in~\cite[Proof of 4.2]{Cochran01}.
For a bordism arising from $2$--handle attachments only, there exists a stabilised surface system for $L$ that isotopes through the bordism unchanged to give rise to a surface system for $L'$.

Next we discuss the implication (\ref{item:main-thm-3})$\implies$(\ref{item:main-thm-2}).  As noted in the discussion of (\ref{item:main-thm-1})$\implies$(\ref{item:main-thm-2}) above, in order to show that two $3$--manifolds are bordant over $B\Z^m$, we have to arrange that the maps agree on the boundary, and that the triple intersection numbers of the surface systems $\Sigma$ and $\Sigma'$  arising as the preimages of the maps to $B\Z^n$ agree (Theorem~\ref{thm:FillableIntersection}).  In order to achieve this, the key geometric move (Lemma~\ref{lem:MakeBoundaryAlike}) switches two clasps, modifying $m_{ijk}(\Sigma)$ and $t_{ijk}(\Sigma)$ in the same way, thus preserving their difference.  Repeated application of this move, together with a tubing operation (Figure~\ref{fig:tubing}) that removes adjacent algebraically cancelling intersection points, arranges that the clasp-words of both links agree, and therefore the terms $m_{ijk}$ agree.  Moreover, as above the surface systems can be isotoped so that the maps to $B\Z^n$ determined by the resulting systems agree on the boundaries of $X_L$ and $X_{L'}$.  After this, we alter the surface systems using
the \emph{torus sum} operation,  given in Construction~\ref{construction:boundary sum}, to arrange that the tuples $\{m_{ijk}(\Sigma) - t_{ijk}(\Sigma)\}_{i<j<k}$ and $\{m_{ijk}(\Sigma') - t_{ijk}(\Sigma')\}_{i<j<k}$ agree in $\Z^{\binom{n}{3}}$, and not just in the total Milnor quotient.  The torus sum operation fixes $m_{ijk}(\Sigma)$.  It will follows that the terms $t_{ijk}$ agree for both link exteriors.  Since these integers detect whether the link exteriors are bordant rel.\ boundary over $B\Z^n$, for the maps to $B\Z^n$ determined by the surface systems, this will complete the proof that (\ref{item:main-thm-3})$\implies$(\ref{item:main-thm-2}).

Finally, we consider the implications involving (\ref{item:main-thm-4}).  To show that (\ref{item:main-thm-1})$\implies$(\ref{item:main-thm-4}), we show that the longitudes of the link components, as elements of the lower central series quotient $\pi_1(X_L)/\pi_1(X_L)_3$, can be read off from the combinatorial data of the position of the clasps in a C-complex.  Note that the longitudes contain more information than the clasp-words: different occurrences of the same index in a clasp-word might appear in the longitude word with different conjugations.  To show (\ref{item:main-thm-4})$\implies$(\ref{item:main-thm-3}), we prove that the longitudes, considered as elements of the quotient $\pi_1(X_L)/\pi_1(X_L)_3$, determine the total Milnor invariant of~$L$.

\begin{convention}
All links are oriented, ordered and have $n \geq 3$ components.
Mathematical objects indexed by a knot component~$K_i$
may equivalently be addressed simply by the natural number~$i$, for brevity.
\end{convention}

\begin{acknowledgements}
We thank Anthony Conway, Tye Lidman, Kent Orr and Carolyn Otto for helpful discussions.
MN and PO were supported by CIRGET postdoctoral fellowships. MP was supported by an NSERC Discovery grant.
\end{acknowledgements}

\section{Surface systems}\label{section:surface-systems}

\begin{definition}\label{defn:surface-system}
A \emph{surface system} for the link~$L = K_1 \cup \cdots \cup K_n$ is a collection of
embedded, oriented (possibly disconnected) surfaces~$\Sigma_i = \Sigma_{K_i}$ in~$S^3$
with~$\partial \Sigma_i = K_i$, that intersect one another transversally and in at most triple points.
\end{definition}

A pair of two surfaces in a surface system potentially intersect each other
in circles, ribbons, and clasps; see e.g.~\cite{Cimasoni04}.

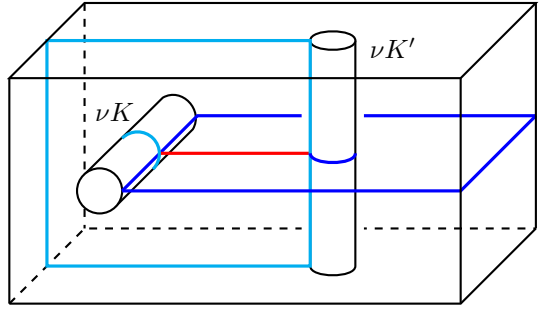
\begin{figure}
\begin{tikzpicture}

\draw[thick, black, dashed] (0,0) -- (1,1) -- (7,1);
\draw[thick, black] (7,1) -- (6,0) -- (0,0);
\draw[thick, black, dashed] (1,4) -- (1,1);

\draw[thick, black] (1.41,1.29)
to [out=45, in=-135] (2.44,2.32)
to [out=60, in=30, looseness=1.3] (2.04, 2.74)
to [out=-135,in=45] (0.99,1.71);
\draw[thick, black, fill=white] (1.2, 1.5) circle [radius=0.3];

\draw[very thick, blue] (1.5, 1.5) -- (2.5,2.5) -- (7,2.5);


\draw[thick, white, fill=white] (3.9,3.5)
to [out=down, in=up] (3.9,0.5)
to [out=down, in=down, looseness=0.7] (4.7, 0.5)
to [out=up, in=down] (4.7, 3.5);

\draw[thick, black, fill=white] (4,3.5)
to [out=down, in=up] (4,0.5)
to [out=down, in=down, looseness=0.7] (4.6, 0.5)
to [out=up, in=down] (4.6, 3.5);

\draw[thick, black, fill=white] (4,3.5)
to [out=up, in=up, looseness=0.7] (4.6,3.5)
to [out=down, in=down, looseness=0.7] (4,3.5);


\draw[very thick, red] (4,2) -- (2,2);

\draw[very thick, cyan] (4,3.5) -- (0.5, 3.5) -- (0.5, 0.5) -- (4,0.5) -- cycle;

\draw [very thick, cyan] (1.5,2.2) arc (135:-45:0.29);

\draw [very thick, blue] (4,2)
to [out=down, in=down, looseness=0.7] (4.6, 2);

\draw[very thick, blue] (7,2.5) -- (6,1.5) -- (1.5,1.5);

\draw[thick, black] (0,3) -- (1,4) -- (7,4) -- (6,3) --cycle;
\draw[thick, black] (6,3) -- (6,0);
\draw[thick, black] (7,1) -- (7,4);
\draw[thick, black] (0,0) -- (0,3);

\node at (1.4,2.55) {$\nu K$};
\node at (5.1,3.4) {$\nu K'$};

\end{tikzpicture}
\caption{The exterior of a link near a clasp intersection of the components~$K$ and $K'$.
}\label{fig:ClaspIntersection}
\end{figure}

We slightly modify the definition of clasp-words given in~\cite{Davis16}.
Given a surface system~$\Sigma$, we equip each component~$K$ of $L$ with the following data:
write $I_{K, i} \subset K$ for the set of intersection points of $K$
with $\Sigma_i$ and write $I_K = \bigcup_i I_{K,i}$ for their union.

Given a point~$x \in I_{K,i}$, we consider the
sign~$\varepsilon_x$ of the intersection point~$x$
between the two oriented submanifolds~$K$
and $\Sigma_{i}$, and assign to~$x$ the pair~$(i, \varepsilon_x)$.
This gives a map~$w_K \colon I_K \to \{1, \ldots, n\} \times \{\pm 1\}$.
Often, we abbreviate the tuple~$(i, \varepsilon)$ to~$i^\varepsilon$.

\begin{definition}\label{defn:cyclic-word}
Let $E$ be a set.
A \emph{cyclic word}~$w$ in the letters~$E$ is a map~$w \colon I \to E$
for $I \subset S^1$ a finite subset. Two cyclic words~$w$ and $v$ are considered to be equivalent if there exists
an orientation preserving diffeomorphism~$f \colon S^1 \to S^1$ such that
$v = w \circ f$. Such a map~$f$ is called an \emph{alignment} between $v$ and $w$.
\end{definition}

\begin{remark}
Given a cyclic word and a starting point in $S^1 \setminus I$,
we can read off the letters in positive direction and obtain a
(linear) word~$\wt w$. Two cyclic words~$w_0$ and $w_1$ are equivalent
if $\wt w_0$ can be obtained from $\wt w_1$ by a cyclic permutation
of the letters.
Note that there is no cancellation of letters at this point.
\end{remark}

\begin{definition}\label{defn:clasp-word}
Let $L$ be a link with a surface system. For each component~$K$
of $L$, the map~$w_K$ defines a cyclic word in the letters~$\{1, \ldots, n\} \times \{\pm 1\}$,
called the \emph{clasp-word}.
\end{definition}

We say that a boundary collar~$\nu \partial X_L = (-\varepsilon, 0] \times L \times S^1$ is \emph{adapted}
to a surface system~$\Sigma$, if we have the following two conditions:
\begin{enumerate}
\item $\Sigma_i \cap (-\varepsilon, 0] \times K_i \times S^1
= (-\varepsilon, 0] \times K_i \times \{1\}$, and
\item for $i \neq j$
we have~$\Sigma_j \cap (-\varepsilon, 0] \times K_i \times S^1 =(-\varepsilon, 0] \times I_{K_i, j} \times S^1$.
\end{enumerate}
We use these collars to glue two link exteriors with their surface systems together.

\begin{const}[Double exterior]\label{const:DoubleExt}
Let $L, L'$ be two links with surface systems~$\Sigma_L$ and $\Sigma_{L'}$ whose clasp-words agree.
Let $f_i \colon K_i \to K'_i$ be an alignment between the clasp-words~$w_i$ and $w'_i$.
Pick two adapted collars~$\nu (\partial X_L) =(-\varepsilon, 0] \times L \times S^1$
and $\nu (\partial X_{L'}) =(-\varepsilon, 0] \times L' \times S^1$. Remove the boundaries
of the exteriors~$X_L$
and $X_{L'}$ and glue them together via the following map:
\begin{align*}
f \colon (-\varepsilon, 0) \times L \times S^1 &\to (-\varepsilon, 0) \times L' \times S^1\\
(t, x, z) &\mapsto (-\varepsilon - t, f_i(x), z)
\end{align*}
for $x \in K_i$.

This defines a closed $3$--manifold~$M = -X_{L} \cup_f X_{L'}$, the \emph{double
exterior}. Inside~$M$, for each $i=1, \ldots, n$, build closed embedded oriented surfaces
\[ F_i = -(\Sigma_{L, i} \cap X_L) \cup (\Sigma_{L', i} \cap X_{L'}). \]
These surfaces intersect each other in circles and triple intersection points.
\end{const}

\begin{remark}
The diffeomorphism type of $M$ does not depend on the choice of alignment.
The isotopy type of the surfaces~$F_i$ does not depend on the choice
of adapted collar. On the other hand, different choices of alignments
can result in different surfaces~$F_i$.
\end{remark}

Recall that a surface system~$\Sigma = \Sigma_1 \cup \cdots \cup \Sigma_n$ is called a \emph{C-complex}
if each $\Sigma_i$ is a Seifert
surface, the only pairwise intersections are clasps (Figure~\ref{fig:ClaspIntersection}), and
there are no triple intersection points~\cite{Cooper82}.
The terminology clasp-word is entirely appropriate for C-complexes. For general surface systems, intersection points that belong to ribbon intersection arcs also contribute to clasp-words.

Consider a surface system $\Sigma = \Sigma_1\cup\cdots\cup \Sigma_n$. For every $i<j$, the \emph{pairwise intersection submanifold} $\Sigma_i\cap \Sigma_j$ is an oriented 1--dimensional submanifold of both~$\Sigma_i$ and $\Sigma_j$, with possibly nonempty boundary. Similarly, for $i<j<k$, the \emph{triple intersection submanifold} $\Sigma_i\cap\Sigma_j\cap\Sigma_k$ is an oriented 0--dimensional submanifold.

\begin{definition}\label{defn:homeo-surface-systems}
Call two surface systems $\Sigma = \Sigma_1\cup\cdots\cup \Sigma_n$ and $\Sigma' = \Sigma_1'\cup\cdots\cup \Sigma_n'$ \emph{homeomorphic} if there exists a homeomorphism $F \colon \Sigma\to \Sigma'$ that restricts to orientation preserving homeomorphisms $F|_{\Sigma_i}\colon \Sigma_i\to \Sigma_i'$ for $i=1,\dots,n$,
and preserves the orientations of each pairwise intersection submanifold, and each triple intersection submanifold.
\end{definition}

\begin{remark}\label{remark:clasp-orientations}
For a surface system $\Sigma=\Sigma_1\cup\cdots\cup\Sigma_n$, if there is a clasp intersection between $\Sigma_i$ and $\Sigma_j$, the \emph{sign} of the clasp is defined to be the sign of the intersection points $K_i\cap \Sigma_j$ and $K_j\cap \Sigma_i$. This sign is also determined by the orientation on the intersection arc, as follows. The clasp is positive if, for $i<j$, the arc in $\Sigma_i\cap \Sigma_j$ points from $K_i$ to K$_j$, whereas the clasp is negative if the arc points from~$K_j$ to~$K_i$.
\end{remark}

\section{Fillings of link complements}\label{section:fillings-of-link-complements}

Let $L = K_1 \cup \cdots \cup K_n$ be an $n$--component oriented, ordered link in $S^3$.
Consider its exterior~$X_L := S^3 \sm \nu L$ and recall that the first homology group~$H_1(X_L;\Z)$
is freely generated by the oriented meridians~$\mu_i$ of $K_i$, so inherits a preferred
isomorphism~$H_1(X_L;\Z) = \Z^n$. By the
identifications
\[ \begin{aligned}
\operatorname{Hom}(H_1(X_L; \Z), \Z^n) &\xrightarrow{\cong}& H^1(X_L; \Z^n)
&\xrightarrow{\cong}& [X_L ; B\Z^n]\\
\big(\mu_i \mapsto e_i \big)& \mapsto &\alpha_L &\mapsto& f_L,
\end{aligned} \]
we obtain a homotopy class of maps~$f_L \colon X_L \to B\Z^n$.
The class~$\alpha_L$ is the unique cohomology class
that evaluates to $\alpha(\mu_i) = e_i \in \Z^n$ on each meridian~$\mu_i$.
Given a surface system~$\Sigma$ for~$L$, the preimage of $\alpha_L$ in $\operatorname{Hom}(H_1(X_L; \Z), \Z^n)$ is given
geometrically by
\[ \alpha_L( \gamma) = \sum_{i=1}^n \big( \gamma \cdot \Sigma_i \big) e_i \]
with $\gamma \in H_1(X_L; \Z)$.

Following Cochran~\cite[p.~54]{Cochran90},
given a collection of closed, oriented surfaces~$F = \{F_i\}$
in a closed, oriented $3$--manifold~$M$, we construct a map~$p_F \colon M \to B\Z^n$.

\begin{const}\label{const:CollapseMap}
Let $\{F_i\}$ be a collection of closed oriented surfaces in
the closed oriented $3$--manifold~$M$.
Consider a tubular neighbourhood~$\nu F_i = F_i \times [-\pi, \pi]$
of $F_i$.
Define the map~$p_i\colon M \to S^1$ to be the composition
\[  F_i \times [-\pi,\pi] \xrightarrow{\operatorname{pr}} [-\pi,\pi] \xrightarrow{\exp} S^1 \]
in the neighbourhood~$\nu F_i$, and~$p_i(x) =1$ for all $x \notin \nu F$.
Here, $\operatorname{pr}$ denotes the projection and $\exp$ denotes $\theta\mapsto \exp( i\theta)$.
Recall that $B\Z^m$ is represented by an
$n$--torus~$T^n = S^1 \times \cdots \times S^1$.
Define the map~$p_F$ as the product
\[\begin{array}{rcl}
p_F \colon M &\to & T^n \\
x &\mapsto &  \big( p_1(x), \ldots, p_n(x)  \big).
\end{array}\]
\end{const}

Equip $T^n$ with the product CW-structure, where $S^1$ has the standard CW-decomposition with a single $0$-- and $1$--cell.
We see that $T^n$ has~$n$ $1$--cells~$S^1\langle i\rangle$,
each of which give rise to a generator of $\pi_1(T^n) \cong \Z^n$.
For each pair $1 \leq i < j \leq n$, there is a two cell~$D_{ij}$,
whose attaching map is the commutator~$[i,j] = iji^{-1}j^{-1}$
on~$S^1\langle i\rangle \vee S^1 \langle j \rangle\subset (T^n)^{(1)}$.
For each triple~$1 \leq i < j < k \leq n$, there is a single $3$--cell~$D_{ijk}$
filling the cube with sides~$D_{ij}, D_{jk}, D_{ik}$,
as illustrated in Figure~\ref{fig:construction-of-S}. Observe that in a cross section of a neighbourhood of $F_i \cap F_j$,
and away from the triple intersection points, the map~$p_F$
is described in Figure~\ref{fig:map-to-S}.

\begin{figure}
\begin{tikzpicture}[scale=0.7]
  \tikzset{->-/.style={decoration={
  markings,
  mark=at position #1 with {\arrow{>}}},postaction={decorate}}}
  \tikzset{->>-/.style={decoration={
  markings,
  mark=at position 0.48 with {\arrow{>}},
  mark=at position 0.52 with {\arrow{>}}},postaction={decorate}}}
    \tikzset{->>>-/.style={decoration={
  markings,
  mark=at position 0.46 with {\arrow{<}},
  mark=at position 0.5 with {\arrow{<}},
  mark=at position 0.54 with {\arrow{<}}},postaction={decorate}}}

\fill[red,  fill opacity=0.2] (0,2) -- (3,1) --  (7,2) -- (4, 3);
\fill[red,  fill opacity=0.2] (3/2,7/2) -- (3/2,-1/2) -- (11/2,1/2) --  (11/2,9/2);
\fill[red,  fill opacity=0.2] (2,9/2) -- (5,7/2) -- (5,-1/2) -- (2,1/2);

\draw  (3/2,3/2) -- (11/2, 5/2);
\draw (2,5/2) -- (5, 3/2);
\draw (7/2, 4) -- (7/2, 0);

\draw[thick, black, ->-=0.5] (0,0) -- (0,4);
\draw[thick, black, ->-=0.5] (3,-1) -- (3,3);
\draw[thick, black, dashed, ->-=0.45] (4,1) -- (4,5);
\draw[thick, black, ->-=0.5] (7,0) -- (7,4);
\draw[thick, black, ->>>-] (0,4) -- (4,5);
\draw[thick, black, ->>>-] (3,3) -- (7,4);
\draw[thick, black, ->>>-] (3,-1) -- (7,0);
\draw[thick, black, dashed, ->>>-] (0,0) -- (4,1);
\draw[thick, black, ->>-] (3,3) -- (0,4);
\draw[thick, black, ->>-] (7,4) -- (4,5);
\draw[thick, black, ->>-] (3,-1) -- (0,0);
\draw[thick, black, dashed, ->>-] (7,0) -- (4,1);

\node at (-1,2) {$S^1\langle i\rangle$};
\node at (0.9,-1) {$S^1\langle j\rangle$};
\node at (5.8,-1) {$S^1\langle k\rangle$};

\node at (1.3,1.3) {$D_{ij}$};

\node at (5.5,1.3) {$D_{ik}$};

\node at (3.3,4.3)  {$D_{jk}$};

\end{tikzpicture}
\caption{The boundary of $D_{ijk}$ in the construction of the CW complex $S$. Opposite faces of the cube are attached to the same $2$--cell via a degree one map.}
\label{fig:construction-of-S}
\end{figure}

The next lemma relates the cell structure on $T^n$ to the map~$p_F$. Let $\{F_i\}$ be a collection of closed oriented surfaces in
the closed oriented $3$--manifold~$M$.
Suppose the surfaces~$F_i$ intersect transversally
in at most triple points (e.g.\ the $F_i$ are a double surface system in a double exterior).
Consequently, each triple intersection
point~$p \in F_i \cap F_j \cap F_k$
is contained in a chart~$U_p$ mapping the three surfaces
to the three coordinate hyperplanes. Denote the set of
triple points by~$P$.

\begin{figure}
\begin{tikzpicture}
  \tikzset{->-/.style={decoration={
  markings,
  mark=at position #1 with {\arrow[scale=2,>=stealth]{>}}},postaction={decorate}}}
    \tikzset{-<-/.style={decoration={
  markings,
  mark=at position #1 with {\arrow[scale=2,>=stealth]{<}}},postaction={decorate}}}
  \tikzset{->>-/.style={decoration={
  markings,
  mark=at position 0.55 with {\arrow[scale=2,>=stealth]{>}},
  mark=at position 0.7 with {\arrow[scale=2,>=stealth]{>}}},postaction={decorate}}}

\draw[ very thick, black] (-2.5, 0)
to [out=right, in=left] (2.5,0);

\draw [very  thick, black] (0,-2.5)
to [out=up, in=down] (0,2.5);

\draw [thick, blue] (0.5, -2.5)
to [out=up, in=down] (0.5, -0.5)
to [out=right, in=left] (2.5, -0.5);

\draw[thick, blue] (2.5, 0.5)
to [out=left, in=right] (0.5, 0.5)
to [out=up, in=down] (0.5, 2.5);

\draw [thick, blue] (-0.5,2.5)
to [out=down, in=up] (-0.5, 0.5)
to [out=left, in=right] (-2.5, 0.5);

\draw[thick, blue] (-2.5, -0.5)
to [out=right, in=left] (-0.5,-0.5)
to [out=down, in=up] (-0.5, -2.5);

\draw [thick, black, ->-=0.65] (-0.46, -2.5)
to [out= left, in=right] (0.46,-2.5);

\draw[thick, black, ->>-] (2.5, -0.51)
to [out=up, in=down] (2.5, 0.51);

\draw[thick, black, -<-=0.65] (0.51, 2.5)
to [out=left, in=right] (-0.51,2.5);

\draw[thick, black, ->>-] (-2.5, -0.46)
to [out=down, in=up] (-2.5, 0.46);

\draw [fill=black] (0,0) circle (3pt);

\draw[|->] [very thick, blue] (-1.15, -1.15)
to [out=-135, in=45] (-1.75, -1.75);

\node at (-1.9, -1.9) {$\ast$};

\draw[|->] [very thick, blue] (1.15, -1.15)
to [out=-45, in=135] (1.75, -1.75);

\node at (1.9, -1.9) {$\ast$};

\draw[|->] [very thick, blue] (-1.15, 1.15)
to [out=135, in=-45] (-1.75, 1.75);

\node at (-1.9, 1.9) {$\ast$};

\draw[|->] [very thick, blue] (1.15, 1.15)
to [out=45, in=-135] (1.75, 1.75);

\node at (1.9, 1.9) {$\ast$};

\node at (3.2,0) {$S^1 \langle i \rangle$};
\node at (-3.2,0) {$S^1 \langle i \rangle$};
\node at (0,3) {$S^1 \langle j \rangle$};
\node at (0,-3) {$S^1 \langle j \rangle$};

\node at (1.2,0.25) {$F_i$};
\node at (0.27,1.2) {$F_j$};
\end{tikzpicture}
\caption{A cross section of a neighbourhood of $F_i \cap F_j$.
The element~$\ast \in T^n$ indicates the unique point in $(T^n)^{(0)}$ i.e.\ the base point. Labelling around the exterior of the neighbourhood indicates the subset of $T^n$ to which that arc of the boundary is mapped.}
\label{fig:map-to-S}
\end{figure}

\begin{lemma}\label{lem:SmallNeighbourhood}
Pick such a chart~$U_p$ around each triple intersection point~$p$.
Then for small enough tubular neighbourhoods~$\nu F_i$,
the following statements hold:
\begin{enumerate}
\item\label{item:3Skel} $p_F \colon M \to T^n$ maps into the $3$--skeleton;
\item\label{item:2Skel} the complement~$M \setminus \bigcup_{p \in P} U_p$
is mapped to the $2$--skeleton of $T^n$; and
\item\label{item:Degree} the restriction~$p_F \colon (U_p, \partial U_p) \to (D_{ijk}, \partial D_{ijk})$
has degree the sign of the intersection point~$p \in F_i \cap F_j \cap F_k$.
\end{enumerate}
\end{lemma}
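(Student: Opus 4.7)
The plan is to first use a compactness argument to shrink the tubular neighbourhoods $\nu F_l$ appropriately, and then read off the three items by local inspection of $p_F$. Since $P$ is finite and the surfaces $F_l$ are closed and transverse, I may first choose the charts $U_p$ pairwise disjoint, and inside each $U_p$ fix coordinates $(x_1,x_2,x_3)$ for which $F_l = \{x_l = 0\}$ for the three surfaces $F_l$, $l \in \{i,j,k\}$, meeting at $p$. A standard transversality-plus-compactness argument on $M \setminus \bigcup_p U_p$ then lets me shrink the $\nu F_l$ so that: outside $\bigcup_{p\in P} U_p$, no three of the tubular neighbourhoods share a common point; for each $p \in F_i \cap F_j \cap F_k$, only $\nu F_i, \nu F_j, \nu F_k$ meet the chart $U_p$; and inside each $U_p$, the normal $[-\pi,\pi]$-coordinate of $\nu F_l$ is aligned with the chart coordinate $x_l$ transverse to $F_l$.

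With this setup, items (\ref{item:3Skel}) and (\ref{item:2Skel}) are essentially immediate. At any point $x \in M \setminus \bigcup_p U_p$, at most two of the coordinates $p_l(x)$ differ from the basepoint $1 \in S^1$, so $p_F(x)$ lies in a sub-torus of the form $S^1\langle a\rangle \times S^1\langle b\rangle$ with basepoints in the other factors, which is contained in the $2$-skeleton of $T^n$; this is (\ref{item:2Skel}). Inside each chart $U_p$ with $p \in F_i \cap F_j \cap F_k$, only $p_i, p_j, p_k$ can be non-basepoint, so $p_F(U_p) \subset D_{ijk}$. Combining, $p_F$ lands in the $3$-skeleton, proving (\ref{item:3Skel}).

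For (\ref{item:Degree}), I additionally arrange the chart coordinates $(x_1,x_2,x_3)$ so that $\partial_{x_l}$ is the positive normal to $F_l$ at $p$, as determined by the orientations of $M$ and $F_l$. By the alignment condition, the restriction $p_F|_{U_p}$ takes the local form $(x_1,x_2,x_3) \mapsto (\exp(i x_1), \exp(i x_2), \exp(i x_3))$, landing in the factors $S^1\langle i\rangle \times S^1\langle j\rangle \times S^1\langle k\rangle = D_{ijk}$ with basepoints elsewhere; in particular $\partial U_p$ is carried into $\partial D_{ijk}$. This is a local diffeomorphism whose Jacobian has constant sign equal to that of $\det(\partial_{x_1},\partial_{x_2},\partial_{x_3})$ relative to the orientation of $M$, and by the choice of coordinates this equals the sign of the positive normal triple $(n_i, n_j, n_k)$, which is by definition the sign of the triple intersection point $p$.

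The main obstacle is the simultaneous shrinking-and-alignment step for the tubular neighbourhoods: I need to shrink all the $\nu F_l$ while preserving their global product structure, and inside each chart $U_p$ simultaneously arrange that the three neighbourhoods for the surfaces meeting at $p$ align with the chart coordinates. This is an \emph{ad hoc} engineering step that is intuitively clear but requires care to write precisely; once it is in place, the local degree computation in (\ref{item:Degree}) is routine.
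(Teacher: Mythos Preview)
Your proof is correct and follows essentially the same strategy as the paper's: shrink the tubular neighbourhoods so that triple overlaps occur only inside the charts $U_p$, which gives (\ref{item:3Skel}) and (\ref{item:2Skel}), and then compute the degree in the local model of three coordinate hyperplanes. You give more explicit detail on the local form of $p_F$ and the Jacobian-sign computation, whereas the paper simply observes that the restriction to a cube around the origin agrees with the attaching map of the $3$--cell $D_{ijk}$ and reads off the degree from that; the content is the same.
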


\begin{proof}
Since there are at most triple intersection points,
for small enough~$\nu F_i$ every point is contained in
at most three tubular neighbourhoods. Furthermore,
 by making the neighbourhoods even smaller,
 we can achieve that the points that are contained in three of the tubular
neighbourhoods are also contained in the interior of~$\bigcup_{p \in P} U_p$.
This shows~(\ref{item:3Skel}) and~(\ref{item:2Skel}).

Statement~(\ref{item:Degree}) can be verified in the local model
of three coordinate hyperplanes intersecting in the origin~$p = 0 \in \R^3$.
Note that the restriction of the map~$p_F$ to a cube around the origin
is illustrated in Figure~\ref{fig:construction-of-S} and it agrees with the attaching map
of the $3$--cell $D_{ijk}$.
\end{proof}

Let $L$ and $L'$ be two links.
Consider their double exterior~$M= -X_{L} \cup X_{L'}$ and
the set
\[  \Xi := \big \{ f \in [M, B\Z^n] \mid  f|_{X_{L} } = f_{L } \text{ and } f|_{X_{L'} } = f_{L'}\big\}. \]

\begin{remark}
Before computing the set~$\Xi$, we remark that, without making a choice,
one cannot simply glue the canonical homotopy
classes~$f_L$ and $f_{L'}$ together to form~$f = f_L \cup f_{L'}$.
The result of gluing, even as a homotopy class, depends on the choice
of representatives of $f_L$ and $f_{L'}$.
\end{remark}

\begin{lemma}\label{lem:AffineSpace}
The set $\Xi$ is a nonempty affine space over the abelian group~$\wt{H}^0(L \times S^1; \Z^n)$.
\end{lemma}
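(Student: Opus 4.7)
The plan is to identify $[M,B\Z^n]$ with $H^1(M;\Z^n)$, using that $B\Z^n=K(\Z^n,1)$, and then read the lemma off directly from the Mayer--Vietoris sequence for the decomposition $M=X_L\cup X_{L'}$ with intersection an open collar of $L\times S^1$. Under this identification, $\Xi$ becomes the preimage of the pair $(\alpha_L,\alpha_{L'})$ under the restriction map
\[ r\colon H^1(M;\Z^n)\longrightarrow H^1(X_L;\Z^n)\oplus H^1(X_{L'};\Z^n). \]

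First I would verify that $\Xi$ is nonempty. By exactness of Mayer--Vietoris, $(\alpha_L,\alpha_{L'})$ lies in the image of $r$ if and only if its image under the difference-of-restrictions map vanishes in $H^1(L\times S^1;\Z^n)$. On the torus $K_i\times S^1$, the class $\alpha_L$ is determined by $\mu_i\mapsto e_i$ and $\lambda_i\mapsto\sum_{j\neq i}\lk(K_i,K_j)e_j$, and the corresponding formula holds for $\alpha_{L'}$. The clasp-word hypothesis in Construction~\ref{const:DoubleExt} implies in particular that the pairwise linking numbers of $L$ and $L'$ agree, and the alignments $f_i$ used in the gluing $f$ preserve the meridian--longitude framings of corresponding components. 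So the two restrictions agree on $L\times S^1$, and nonemptiness follows.

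Next I would compute $\ker r$. By exactness it equals the cokernel of
\[ \delta^0\colon H^0(X_L;\Z^n)\oplus H^0(X_{L'};\Z^n)\longrightarrow H^0(L\times S^1;\Z^n),\qquad (a,b)\longmapsto a|_\partial - b|_\partial. \]
Both link exteriors are connected, so the source is $\Z^n\oplus\Z^n$; the boundary $L\times S^1$ has $n$ components, so the target is $(\Z^n)^n$; and the map lands in the diagonal $\Delta(\Z^n)$ as $(a,b)\mapsto(a-b,\dots,a-b)$. The cokernel is therefore $(\Z^n)^n/\Delta(\Z^n)$, which is exactly $\widetilde H^0(L\times S^1;\Z^n)$.

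Putting these together, $r$ is a homomorphism of abelian groups and $\Xi$ is a nonempty fiber of $r$, hence an affine space over $\ker r=\widetilde H^0(L\times S^1;\Z^n)$. The main thing to keep an eye on, more a careful bookkeeping matter than a real obstacle, is that the identification $\partial X_L\cong\partial X_{L'}$ induced by $f$ in Construction~\ref{const:DoubleExt} genuinely identifies the boundary restrictions of $\alpha_L$ and $\alpha_{L'}$; this is precisely where the agreement of pairwise linking numbers (implicit in the clasp-word agreement) is used.
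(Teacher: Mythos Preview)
Your proof is correct and follows essentially the same approach as the paper: identify $[M,B\Z^n]$ with $H^1(M;\Z^n)$ and apply the Mayer--Vietoris sequence for $M=-X_L\cup X_{L'}$, with nonemptiness coming from the agreement of linking numbers. The paper's version is terser, writing the sequence directly with $\wt H^0(L\times S^1;\Z^n)$ as the kernel term rather than computing it explicitly as the cokernel of $\delta^0$ as you do, but the argument is the same.
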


\begin{proof}
Using the correspondence~$[M, B\Z^n] = H^1(M;\Z^n)$, we place $[M,B\Z^n]$ in the Mayer-Vietoris exact sequence with $\Z^n$ coefficients
\[ 0 \to \wt{H}^0(L \times S^1) \to H^1(M) \xrightarrow{\operatorname{res}} H^1(X_L) \oplus H^1(X_{L'}) \to H^1(L\times S^1) .\]
Note that the set~$\Xi$ is the preimage of $f_L \oplus f_{L'}$
under the restriction map~$\operatorname{res}$.
Since the linking numbers of $L, L'$ agree, we have that $f_L - f_{L'}$ vanishes
in $H^1(L\times S^1)$. Consequently, the set~$\Xi$ is nonempty.  By exactness, $\Xi$ is then affine over~$\wt{H}^0(L \times S^1; \Z^n)$.
\end{proof}

\begin{remark}\label{rem:GeometricDescription}
The affine action of~$\wt{H}^0(L \times S^1; \Z^n)$ has a concrete description
in terms of intersection theory. It is derived from an unfaithful action of $H^0(L \times S^1; \Z)$,
which has the following description:
consider $\alpha \in \Xi$ as an element in the module~$H^1(M; \Z^n) = \Hom_{\Z}(H_1(M;\Z),\Z^n)$ and let $F \in [L \times S^1,\Z^n] \cong H^0(L \times S^1;\Z^n)$ be a map~$L \times S^1 \to \Z^n$. This associates
an element~$F(K_i \times S^1) \in \Z^n$ to each component~$K_i \times S^1$. Then we define
\[ (F  \cdot \alpha) (\gamma)  = \alpha(\gamma) + \sum_{i} \langle K_i\times S^1 , \gamma\rangle F(K_i \times S^1) \in \Z^n \]
for each $\gamma \in H_1(M; \Z)$, where $\langle K_i \times S^1 , \gamma\rangle \in \Z$ denotes the algebraic intersection number.
As $\sum_{i} \langle K_i\times S^1 , \gamma \rangle = 0$,
we have $F\cdot \alpha = \alpha$
for a (globally) constant~$F \colon L \times S^1 \to \Z^n$. As a consequence the action descends to the reduced cohomology $\wt H^0(L\times S^1; \Z)$.
\end{remark}

We can pinpoint concrete representatives of $f_L$ and $f_{L'}$
using surfaces systems, which allows us to
construct elements of~$\Xi$.

\begin{proposition}
Let $L$ and  $L'$ be two links and let $\Sigma$ and $\Sigma'$ be surface systems
with aligned clasp-words.
Let $M$ be the double exterior and let $F = - \Sigma \cup \Sigma'$
be the double surface system.
Then the map~$p_F  \colon M \to B\Z^n$
from Construction~\ref{const:CollapseMap} is an element of $\Xi$.
\end{proposition}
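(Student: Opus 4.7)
The strategy is to exploit the classifying space property of $B\Z^n$: since $B\Z^n = T^n$ is a $K(\Z^n, 1)$, two maps $X_L \to B\Z^n$ are homotopic if and only if they induce the same homomorphism on $H_1(-; \Z)$. Recall from the opening of Section \ref{section:fillings-of-link-complements} that $f_L$ is characterised by $(f_L)_*(\mu_i) = e_i$ on meridians. Thus to prove $p_F \in \Xi$, it suffices to verify $(p_F|_{X_L})_*(\mu_i) = e_i$ for each meridian of $L$, and symmetrically for the meridians $\mu'_i$ of $L'$.

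For the $\pi_1$-computation, observe that by the Pontryagin--Thom nature of Construction~\ref{const:CollapseMap}, each coordinate~$p_i \colon M \to S^1$ represents the cohomology class Poincar\'e dual to~$[F_i] \in H_2(M;\Z)$. This gives the intersection formula
\[ (p_F)_*(\gamma) = \sum_{i=1}^n (\gamma \cdot F_i)\, e_i \in \Z^n \]
for every $\gamma \in H_1(M;\Z)$, where the intersection numbers are taken in the closed, oriented $3$--manifold~$M$. This reduces the proof to computing the intersection numbers $\mu_i \cdot F_j$ in $M$ (and analogously $\mu'_i \cdot F_j$), as in the intersection description of $\alpha_L$ given earlier in the section.

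The computation is carried out using the adapted collar. Place $\mu_i$ on the torus $\partial \nu K_i$, which lies in the adapted collar for $X_L$. Using the two collar conditions preceding Construction~\ref{const:DoubleExt}, the surface $\Sigma_{L,i}$ meets this torus in a single longitude of $K_i$, which intersects the chosen meridian $\mu_i$ transversely in exactly one point; the surfaces $\Sigma_{L,j}$ for $j \ne i$ meet the torus only in small meridional circles, which can be chosen disjoint from $\mu_i$. Moreover, since $\mu_i$ is supported entirely in $X_L \subset M$, it has no intersection with $\Sigma_{L',j} \cap X_{L'}$ for any $j$. Hence in $M$, $\mu_i \cdot F_j = \delta_{ij}$, so that $(p_F|_{X_L})_*(\mu_i) = e_i$. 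The symmetric argument using meridians of $L'$ gives $(p_F|_{X_{L'}})_*(\mu'_i) = e_i$.

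The main obstacle is verifying the correct sign of the single intersection~$\mu_i\cdot F_i$, since $F_i$ contains the \emph{orientation-reversed} piece~$-(\Sigma_{L,i}\cap X_L)$ and sits inside~$-X_L\subset M$. The two sign reversals cancel: the intersection number $\mu_i \cdot (-\Sigma_{L,i})$ computed in $-X_L$ equals the standard intersection number $\mu_i \cdot \Sigma_{L,i}$ in $X_L$, which is~$+1$ by the Seifert surface orientation convention. Once this sign check is made, the Mayer--Vietoris description of $\Xi$ in Lemma~\ref{lem:AffineSpace} immediately yields $p_F \in \Xi$.
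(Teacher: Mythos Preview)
Your proof is correct and follows essentially the same approach as the paper: reduce to checking that $H_1(p_F)$ sends each meridian~$\mu_i$ to the $i$-th standard generator, and verify this via the intersection formula $\mu_i \cdot \Sigma_j = \delta_{ij}$. Your version is considerably more detailed than the paper's, which dispatches the argument in two sentences; in particular, your careful treatment of the sign cancellation arising from the orientation reversals on both $X_L$ and $\Sigma_{L,i}$ in the double is a point the paper leaves implicit.
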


\begin{proof}
We have to check that $H_1(p_F)$ sends a meridian~$\mu_i$
to the $i$-th standard generator in $H_1(B\Z^n;\Z) \cong \Z^n$.
We verify that $H_1(p_F)$ sends a meridian~$\mu_i$
to the $i$-th standard generator in $H_1(B\Z^n;\Z) \cong \Z^n$
away from the double and triple points of~$F$.
This follows from the fact that
$\mu_i \cdot \Sigma_j = \delta_{ij}$.
\end{proof}

Given a surface system~$\Sigma$ for $L$, we count
the signed triple intersection points between $\Sigma_i$, $\Sigma_j$
and $\Sigma_k$, and denote the outcome by $t_{ijk}( \Sigma ) = [\Sigma_i] \cdot [ \Sigma_j]
\cdot [\Sigma_k]$.
Also recall that $\Omega_n(B)$ denotes the oriented bordism group of closed, oriented $n$--manifolds with a map to some space $B$.

\begin{theorem}\label{thm:FillableIntersection}
Let $L$ and $L'$ be two links with surface systems~$\Sigma$ and $\Sigma'$.
Suppose~$\Sigma$ and $\Sigma'$ have aligned clasp-words.
Let $M$ be the double exterior with double surface system~$F$.
Then the following two conditions are equivalent:
\begin{enumerate}[(i)]
\item $t_{ijk}(F) =  t_{ijk}(\Sigma') - t_{ijk}(\Sigma) = 0$ for all $1\leq i < j< k \leq n$;
\item\label{item:Bordism} the element~$(M,  p_F) \in \Omega_3(B\Z^n) $ vanishes.
\end{enumerate}
\end{theorem}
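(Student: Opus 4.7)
The plan is to identify the bordism class $(M, p_F) \in \Omega_3(B\Z^n)$ explicitly as the $\binom{n}{3}$-tuple $\big(t_{ijk}(F)\big)_{i<j<k}$, at which point the equivalence (i)$\Leftrightarrow$(ii) is transparent.

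I would first recall that $\Omega^{SO}_k(\mathrm{pt}) = 0$ for $1 \leq k \leq 3$, so that the edge homomorphism of the Atiyah--Hirzebruch spectral sequence
\[ \Omega_3(B\Z^n) \xrightarrow{\simeq} H_3(B\Z^n; \Z), \qquad (N, f) \mapsto f_*[N], \]
is an isomorphism. Using $B\Z^n \simeq T^n$, the target is free abelian of rank $\binom{n}{3}$, with basis dual to the cup products $\alpha_i \cup \alpha_j \cup \alpha_k \in H^3(T^n;\Z)$ for $1\leq i<j<k\leq n$, where $\alpha_l \in H^1(T^n;\Z)$ is the generator of the $l$-th tensor factor of the exterior algebra. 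Consequently $(M, p_F) = 0$ in $\Omega_3(B\Z^n)$ if and only if the Kronecker pairings $\langle p_F^*(\alpha_i \cup \alpha_j \cup \alpha_k), [M]\rangle$ all vanish.

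I would then compute these pairings directly from Lemma~\ref{lem:SmallNeighbourhood}. A generator of $H^3(T^n;\Z)$ dual to $\alpha_i\cup\alpha_j\cup\alpha_k$ is represented by a point in the interior of the top $3$-cell $D_{ijk}$; picking such a regular value, parts~(\ref{item:3Skel})--(\ref{item:Degree}) of Lemma~\ref{lem:SmallNeighbourhood} show that its preimages under $p_F$ lie in the charts $U_p$ around the triple intersection points $p \in F_i \cap F_j \cap F_k$, each mapped to $D_{ijk}$ with degree equal to the sign of the triple point. Summing over $p$ yields
\[ \big\langle p_F^*(\alpha_i \cup \alpha_j \cup \alpha_k), [M]\big\rangle \;=\; [F_i]\cdot[F_j]\cdot[F_k] \;=\; t_{ijk}(F). \]

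Finally, the identity $t_{ijk}(F) = t_{ijk}(\Sigma') - t_{ijk}(\Sigma)$ follows by splitting the count along $M = -X_L \cup X_{L'}$: the $X_{L'}$ piece contributes $t_{ijk}(\Sigma')$ on the nose, while the $-X_L$ piece contributes $-t_{ijk}(\Sigma)$ because reversing the ambient $3$-manifold's orientation flips every transverse triple intersection sign. Combining these steps gives
\[ (M, p_F) = 0 \text{ in } \Omega_3(B\Z^n) \;\iff\; t_{ijk}(F) = 0 \text{ for all } i<j<k \;\iff\; t_{ijk}(\Sigma) = t_{ijk}(\Sigma') \text{ for all } i<j<k, \]
which is precisely (i)$\Leftrightarrow$(ii). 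I expect the most delicate point to be ensuring that the chosen regular values inside the $3$-cells really have the preimage behaviour described; this is exactly what Lemma~\ref{lem:SmallNeighbourhood} arranges once the tubular neighbourhoods $\nu F_l$ are taken small enough.
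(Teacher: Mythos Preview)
Your proposal is correct and follows essentially the same strategy as the paper: both reduce to $H_3(B\Z^n;\Z)$ via the Atiyah--Hirzebruch spectral sequence and then invoke Lemma~\ref{lem:SmallNeighbourhood} to identify the image of $[M]$ with the tuple of triple intersection numbers $t_{ijk}(F)$. The only cosmetic difference is that the paper computes $(p_F)_*[M]$ homologically via a diagram chase through the K\"unneth decomposition and excision on the pair $\big((T^n)^{(3)},(T^n)^{(2)}\big)$, whereas you phrase the same computation dually, evaluating the Kronecker pairings $\langle p_F^*(\alpha_i\cup\alpha_j\cup\alpha_k),[M]\rangle$ by counting signed preimages of a regular value in the interior of $D_{ijk}$.
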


\begin{proof}
From the Atiyah-Hirzebruch spectral sequence with
second page~$E^2_{p,q} = H_p(B\Z^n;\Omega_q(\pt))$ and converging to $\Omega_{p+q}(B\Z^n)$, we obtain
\begin{align*}
\Omega_3(B\Z^n ) &\cong \Omega_3(\pt) \oplus H_3(B\Z^n; \Z)\\
(M, f) &\mapsto M \oplus f ([M]),
\end{align*}
where $[M]$ denotes the orientation class of $M$. The bordism group~$\Omega_3(\pt)$
is zero. As a result, Condition~(\ref{item:Bordism}) is equivalent
to $p_F([M]) = 0$.

Next, we compute $p_F ([M])$ in terms of triple intersection points.
First, consider the K\"unneth isomorphism
\[ H_3(T^n; \Z) \xrightarrow{\oplus \operatorname{pr}_{ijk}} \bigoplus_{i < j < k}
H_3(T^3_{ijk}; \Z),\]
where $\operatorname{pr}_{ijk}$ is the map on homology induced by the projection onto the
subtorus~$S^1\langle i\rangle \times S^1\langle j\rangle \times S^1\langle k\rangle$.

Now pick tubular neighbourhoods~$\nu F_i$ of the surfaces~$F_i$,
and tubular neighbourhoods~$U_p \ni p$ for each triple intersection
point~$p$ as in Lemma~\ref{lem:SmallNeighbourhood}. In particular recall that by~(\ref{item:3Skel}) of that lemma, $p_F$ factors through the 3-skeleton as $M \to (T^n)^{(3)} \subseteq T^n$.
Consider the commutative diagram of pairs
\[\begin{tikzcd}
  (M,\emptyset) \ar[r,"p_F"] \ar[d] & \Big( (T^n)^{(3)},\emptyset \Big) \ar[d] \\
  (M,M\sm \bigcup_{p \in P} \Int U_p) \ar[r] &
  \Big((T^n)^{(3)}, (T^n)^{(2)} \Big)  \\
  \bigcup_{p \in P}(U_p,\partial U_p) \ar[u] \ar[r, "\bigcup p_F \mid_{U_p}"] & \Big((T^n)^{(3)}, (T^n)^{(2)} \Big) \ar[u,equal]
\end{tikzcd},\]
where $P$ is the set of triple intersection points, the vertical maps are inclusions of pairs and the horizontal maps are induced by $p_F$.
Apply $H_3(-; \Z)$ to this diagram. By excision, the bottom left
vertical map induces an isomorphism in homology.
This gives rise to the left hand square of the commuting diagram below, in which all coefficients are the integers.
\[
\begin{tikzcd}
& H_3(T^n) \ar[dr, "\cong"] &\\
H_3(M) \ar[ur, "p_F"] \ar[r, "p_F"] \ar[d] & H_3\Big( (T^n)^{(3)}\Big) \ar[d] \ar[u] \ar[r] & \bigoplus_{i<j <k} H_3(T^3_{ijk}) \ar[d, "\cong"] \\
\bigoplus_{p \in P} H_3(U_p, \partial U_p) \ar[r] & H_3\Big( (T^n)^{(3)}, (T^n)^{(2)} \Big) \ar{r}[swap]{\operatorname{exc} }{\cong}  &
\bigoplus_{i<j <k} H_3(D_{ijk}, \partial D_{ijk})
\end{tikzcd}
\]
From the diagram, deduce that $p_F ([M])$ can be computed
from the map~$\bigcup p_F|_{U_p}$ as follows:
\begin{align*}
H_3(M; \Z) &\to H_3(T^n;\Z) \cong \bigoplus_{i<j<k} H_3(D_{ijk}, \partial D_{ijk}; \Z)\\
[M] &\mapsto \bigoplus_{i<j<k} \sum_{p \in P_{ijk}} p_F|_{U_p}  ([U_p]),
\end{align*}
where $P_{ijk} \subset P$ is the set of triple intersection points between
$F_i$, $F_j$, and $F_k$. By Lemma~\ref{lem:SmallNeighbourhood}~(\ref{item:Degree}),
$p_F|_{U_p} ([U_p]) = \sign p \cdot[D_{ijk}]$, where $\sign p$ is the
sign of the intersection point. Now this implies that
\[ p_F ([M]) = \bigoplus_{i<j<k} t_{ijk}(F) ([D_{ijk}]) \in \bigoplus_{i<j<k} H_3(D_{ijk}, \partial D_{ijk}; \Z) \cong H_3(T^n; \Z).\]
\end{proof}

\section{Sweeping}\label{section:sweeping}

The goal of this section is to prove the implication (\ref{item:main-thm-2})$\implies$(\ref{item:main-thm-1}) from Theorem~\ref{thm:Main-intro}.
First, we show how to replace an arbitrary (relative) bordism
over $B\Z^n$ between
two link exteriors with one that is constructed exclusively from
$2$--handles. This construction was used in~\cite[Proof~of~Theorem~4.2]{Cochran01},
and we include a detailed argument for the convenience of the reader.

For an integer $0\leq k\leq 4$, a $4$--dimensional \emph{elementary $k$--bordism} is a $4$--manifold
\[Y\cong (X\times [0,1]) \cup_{S^{k-1} \times D^{4-k}} D^{k} \times D^{4-k},\]
where $X$ is a 3--manifold and $S^{k-1} \times D^{4-k} \subset X \sm \partial X \times \{1\}$ is an attaching region for
a $k$--handle. By convention, $S^{-1}:=\emptyset$.

\begin{lemma}\label{lem:ElementaryReplacement}
Let $Y$ be an elementary $1$--bordism equipped with a map $F \colon Y\to B\Z^n$. Write~$\partial (Y, F) = - (X_0, f_0) \sqcup (X_{1}, f_{1})$ for the boundary over $B\Z^n$. Suppose that $H_1(f_0) \colon H_1(X_0; \Z) \to H_1(B\Z^n; \Z)$ is an epimorphism.
Then there exists an elementary $2$--bordism~$(Z,h)$ over $B\Z^n$ with the same boundary $\partial (Z,h) = - (X_0, f_0) \sqcup (X_{1}, f_{1})$.
\end{lemma}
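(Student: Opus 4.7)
The strategy is to attach a 2-handle to $Y$ cancelling $h_1$, extend $F$ over it, and then attach a further 2-handle that restores $X_1$ as the upper boundary; reinterpreting the resulting 4-manifold then exhibits it as an elementary 2-bordism on $X_0$. The critical subtlety is that the cancelling 2-handle must be attached along a loop that is null-homotopic in $B\Z^n$ so that the map $F$ extends, and this is exactly what the surjectivity hypothesis on $H_1(f_0)$ enables.

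First I would produce the attaching loop. Let $S \subset X_1$ denote the belt sphere of $h_1$ (the boundary of its cocore), and pick any loop $\gamma_0 \subset X_1$ meeting $S$ transversally in a single point; such a geometric dual exists by pushing an arc across the 1-handle and closing up in $X_0$. In general $v := [F(\gamma_0)] \in \pi_1(B\Z^n) \cong \Z^n$ is nonzero. Using surjectivity of $H_1(f_0)$, choose $\delta \subset X_0$ with $[f_0(\delta)] = v$, and push $\delta$ up through the collar $X_0 \times [0,1] \subset Y$ to a loop $\delta' \subset X_1$ disjoint from $S$. Setting $\gamma := \gamma_0 \cdot (\delta')^{-1}$ yields a loop in $X_1$ that still meets $S$ transversally once and satisfies $[F(\gamma)] = 0$. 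Now attach a 2-handle $h_2$ along $\gamma$ with a framing making $(h_1,h_2)$ a geometrically cancelling pair; the standard handle-cancellation theorem then provides a diffeomorphism $Y \cup h_2 \cong X_0 \times [0,1]$ rel $X_0$, and since $[F(\gamma)] = 0$ the map extends to $F' \colon Y \cup h_2 \to B\Z^n$.

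To recover $X_1$ as the upper boundary, let $D \subset Y \cup h_2$ be the cocore of $h_2$ and $K' := \partial D \subset X_1' := \partial_+(Y \cup h_2)$ the dual knot to $\gamma$ in the surgered manifold $X_1'$. Attaching a further 2-handle $h_2''$ along $K'$ with its natural framing inherited from $D$ surgically undoes the surgery on $\gamma$, so that $Z := Y \cup h_2 \cup h_2''$ has upper boundary canonically identified with $X_1$. Since $K' = \partial D$ is itself null-homotopic in $B\Z^n$ under $F'$, the map extends to $h \colon Z \to B\Z^n$ with $h|_{X_0} = f_0$ and $h|_{X_1} = f_1$. Finally, combining the identification $Y \cup h_2 \cong X_0 \times [0,1]$ with the attachment of $h_2''$ exhibits $Z \cong (X_0 \times [0,1]) \cup h_2''$ as an elementary 2-bordism, completing the construction. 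The main obstacle is the very first step: producing $\gamma$ requires simultaneously geometric duality with the belt sphere of $h_1$ (so that the cancelling 2-handle truly cancels $h_1$) and triviality of $[F(\gamma)]$ (so that $F$ extends over that 2-handle); the surjectivity hypothesis on $H_1(f_0)$ is exactly what permits adjusting any geometric dual by a loop pushed up from $X_0$ to achieve both.
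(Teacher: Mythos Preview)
Your approach shares the paper's core idea---use the surjectivity hypothesis to adjust a geometric dual of the belt sphere so that its image in $B\Z^n$ is trivial, then attach a cancelling $2$--handle---but the organization differs. The paper attaches a single $2$--handle to $X_1\times I$ along $\gamma$, obtaining a bordism $\overline{Z}$ from $X_1$ back to $X_0$, and then verifies that the extension of $f_1$ over $\overline{Z}$ restricts to $f_0$ on the new end by stacking: since $Y\cup_{X_1}\overline{Z}\cong X_0\times I$ (the handles cancel), the glued map furnishes the required homotopy $f_0'\sim f_0$; finally one turns $\overline{Z}$ upside down. Your version instead attaches $h_2$ directly to $Y$, cancels to get $X_0\times I$, and then attaches a second $2$--handle $h_2''$ to restore $X_1$. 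Both routes produce the same elementary $2$--bordism, but yours buys $h|_{X_0}=f_0$ for free at the cost of the extra handle, while the paper needs the stacking argument but only one handle.

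There is, however, a genuine gap in your final step. You assert that because $K'=\partial D$ is null-homotopic under $F'$, the extension $h$ over $h_2''$ satisfies $h|_{X_1}=f_1$. Null-homotopy of $K'$ guarantees that \emph{some} extension over $h_2''$ exists, but not that it restricts to $f_1$ on the recreated solid torus $\nu\gamma\subset X_1$. To close this, observe that on $\partial h_2''\cong S^3$, decomposed as the attaching region $S^1\times D^2$ (where $h=F'$) and the free face $D^2\times S^1$ (where you want $h=f_1|_{\nu\gamma}$), the two prescriptions agree on the common torus since $F'=f_1$ on $X_1\setminus\nu\gamma$. Because $\pi_3(B\Z^n)=0$, this map $S^3\to B\Z^n$ extends over $h_2''$, giving an $h$ with $h|_{X_1}=f_1$ on the nose. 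Alternatively, note that any extension agrees with $f_1$ on $X_1\setminus\nu\gamma$, and since $H_1(X_1\setminus\nu\gamma)\to H_1(X_1)$ is surjective and $B\Z^n$ is a $K(\Z^n,1)$, the two maps are homotopic. Either way, the asphericity of $B\Z^n$ is the missing ingredient, and it is precisely the analogue of the paper's stacking argument for $f_0'\sim f_0$.
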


\begin{proof}
We will find a curve in $X_1$ so that attaching a $2$--handle to $X_1$ along this curve cancels the $1$--handle attachment in the elementary $1$--bordism $Y$. But care must be taken that the map to $B \Z^n$ extends over this $2$--handle attachment.

Using that $X_1 \cong X_0 \# \left( S^1 \times S^2\right)$, consider the image of $S^1\times \pt$ in $H_1(B \Z^n;\Z)$ under $H_1(f_1)$. By assumption, $H_1(f_0) \colon H_1(X_0; \Z) \to H_1(B\Z^n; \Z)$ is surjective,
and so we take a curve $\gamma'\subset X_0$ such that $H_1(f_0)([\gamma'])=H_1(f_1)([S^1\times\pt]) \in H_1(B\Z^n;\Z) \cong \Z^n$. Use the curve $\gamma'$ to modify $S^1\times\pt\subset X_1$, and define a curve $\gamma\subset X_1$ such that $H_1(f_1)([\gamma])=0$.

Attach a $2$--handle along $\gamma$, with any framing. This cancels the~$1$--handle.
The associated elementary $2$--bordism~$\overline{Z}$ goes
from $X_1$ back to $X_0$. As $H_1(f_1)([\gamma]) = 0$,
we can extend the map~$f_1$ over $\overline Z$.
We write~$h \colon \overline Z \to B\Z^n$ for some choice of an extension and write $f'_i$
for its restriction to $X_0$.

We claim that $f'_{0}$ is homotopic to $f_0$. This can be seen by stacking
the bordisms~$Y$ and $Z'$ together along~$X_1$, and observing that
\[ Y \cup \overline Z \cong X_0 \times I,\]
which gives an homotopy from $f_0$ to $f_0'$. Modify the map on $\overline Z$
in a collar of $X_0$, to arrange that $f'_0 = f_0$.
Turn the bordism~$\overline Z$ upside-down to yield the required bordism~$Z$.
\end{proof}

Let $W$ be a bordism rel.\ boundary over $B\Z^n$ from $X_L$ to $X_{L'}$, so that
\[ \partial W = - X_L \cup \Big(\Big(\coprod_n S^1\times S^1\Big)\times[0,1]\Big)\cup X_{L'}.\]
We refer to the collection of thickened
tori~$\left(\coprod_n S^1\times S^1\right)\times[0,1]$ as the \emph{vertical boundary}.
Note that $\partial W$ decomposed in this way is homeomorphic to the usual boundary of the double link exterior $-X_L \cup X_{L'}$.
We will now use standard Morse theory arguments to present $W$ as a series of stacked elementary bordisms, after which we shall proceed to simplify that presentation to comprise a concatenation of elementary~$2$--bordisms.

By throwing away closed components, we can and will assume that $W$ is connected. Pick a Morse function~$g \colon W \to [0,1]$ such that
$g^{-1}(0)=X_L$, $g^{-1}(1)=X_{L'}$
and $g$ is the projection onto $[0,1]$ on the vertical
boundary. This implies that all critical points are in the interior of~$W$.

By cancelling the critical points of index $0$ and $4$, we may assume that $g$ has critical points of index $1$, $2$ and $3$ only~\cite[Theorem~8.1]{Milnor-h-cob}. Write $m$ for the total number of critical points of $g$, and rearrange them into increasing order~\cite[Theorem~4.8]{Milnor-h-cob}. Set $y_0 := 0$ and $y_m := 1$.  There now exist regular values~$y_1, \ldots, y_{m-1}\in[0,1]$ of~$g$ such that each interval $[y_i,y_{i+1}]$, for $i=0,\dots,m-1$, contains exactly one critical point, with index $k_i$, say, where
\[ k_i = \begin{cases}
1 & i=0, \ldots, a\\
2 & i = a + 1, \ldots, b\\
3 & i = b+1, \ldots, m-1,
\end{cases} \]
for some integers $a$ and $b$ with $-1\leq a \leq b \leq m-1$.

For $i=0,\dots,m-1$, define a collection of submanifolds~$W_i = g^{-1}([y_{i}, y_{i+1}])\subset W$, and write $X_i = g^{-1}(y_i)$. Then $\partial W_i = -X_i \sqcup X_{i+1}$, the index of the critical point in $W_i$ is $k_i$, and we have $X_0 = X_L$ and $X_m = X_{L'}$. We have presented $W$ as a series of stacked elementary $k_i$--bordisms. We will now use the map from $W$ to $B\Z^n$ to simplify the presentation using Lemma~\ref{lem:ElementaryReplacement}.

\begin{lemma}\label{lemma-W-only-2-handles}
Let~$W$ be a bordism rel.\ boundary from $X_L$ to $X_{L'}$ over $B\Z^n$.
There exists another bordism~$\wh{W}$ over $B\Z^n$ between these link exteriors obtained by stacking elementary~$2$--bordisms.
\end{lemma}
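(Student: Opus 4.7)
The plan is to simplify the handle decomposition of $W$ by successively invoking Lemma~\ref{lem:ElementaryReplacement} to trade $1$--handles for $2$--handles, in two passes. In the first pass, I would process the original $1$--handles $W_0, \ldots, W_a$ from bottom to top. In the second pass, I would turn the resulting bordism upside-down so that the original $3$--handles now appear as $1$--handles (a $4$--dimensional handle of index $k$ becomes one of index $4-k$ upon reversal, while $2$--handles are preserved), and repeat. The main technical point is to check that the hypothesis of Lemma~\ref{lem:ElementaryReplacement}, namely that $H_1$ of the incoming boundary surjects onto $H_1(B\Z^n) = \Z^n$, holds at each stage where we wish to apply it.

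For the base case, $H_1(F|_{X_L}) \colon H_1(X_L;\Z) \to H_1(B\Z^n;\Z) \cong \Z^n$ is an isomorphism, because the oriented meridians of $L$ freely generate $H_1(X_L;\Z)$ and are sent to the standard basis, and the same holds for $X_{L'}$. For the inductive step on a $1$--handle bordism $W_i$ from $X_i$ to $X_{i+1}$, observe that viewed upside-down $W_i$ is built from $X_{i+1} \times I$ by attaching a single $3$--handle; equivalently, $W_i$ has the homotopy type of $X_{i+1}$ with one $3$--cell attached along the belt $2$--sphere. Consequently the inclusion $X_{i+1} \hookrightarrow W_i$ induces an isomorphism on $H_1$. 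Since $H_1(F|_{X_i})$ and $H_1(F|_{X_{i+1}})$ both factor through $H_1(F|_{W_i}) \colon H_1(W_i) \to H_1(B\Z^n)$, surjectivity of the former forces surjectivity of $H_1(F|_{W_i})$ and therefore, composing with an isomorphism, of the latter.

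The first pass then proceeds iteratively: apply Lemma~\ref{lem:ElementaryReplacement} to $W_0$ to produce an elementary $2$--bordism $Z_0$ over $B\Z^n$ with the same boundary $-(X_L, F|_{X_L}) \sqcup (X_1, F|_{X_1})$, splice $Z_0$ into the stack in place of $W_0$, and continue with $W_1, \ldots, W_a$. Because the $B\Z^n$--boundary of each replacement coincides with that of the original handle, the rest of the stack and all the intermediate maps $F|_{X_j}$ are unaffected, so the surjectivity hypothesis verified inductively above remains valid at each stage. The outcome is a bordism consisting of $Z_0, \ldots, Z_a$ (elementary $2$--bordisms), followed by the original $W_{a+1}, \ldots, W_b$ ($2$--handles), followed by the original $W_{b+1}, \ldots, W_{m-1}$ ($3$--handles).

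For the second pass, turn this bordism upside-down so that $X_{L'}$ is now on the bottom. The original $3$--handles become $1$--handles, the intervening $2$--handles remain $2$--handles, and the elementary $2$--bordisms produced in the first pass remain elementary $2$--bordisms. Since $H_1(F|_{X_{L'}})$ is an isomorphism onto $\Z^n$, the very same inductive argument lets us apply Lemma~\ref{lem:ElementaryReplacement} to each of these new $1$--handles in turn, replacing them with elementary $2$--bordisms. Turning the result right-side up produces the desired bordism $\widehat{W}$, stacked entirely from elementary $2$--bordisms. The only real obstacle is tracking the $H_1$--surjectivity hypothesis through the iteration, and this is dispatched uniformly by the homotopy-equivalence observation that both link exteriors $X_L$, $X_{L'}$ realise $\Z^n$ on $H_1$ and that a $1$--handle bordism induces an isomorphism on $H_1$ from its upper boundary.
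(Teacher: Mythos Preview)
Your proposal is correct and follows essentially the same approach as the paper: both decompose $W$ into ordered elementary bordisms, replace the $1$--handles by $2$--bordisms via Lemma~\ref{lem:ElementaryReplacement}, and then handle the $3$--handles by reversing orientation and repeating. Your treatment is in fact more explicit than the paper's about why the $H_1$--surjectivity hypothesis propagates through the $1$--handle layers (via the observation that $X_{i+1}\hookrightarrow W_i$ is an $H_1$--isomorphism), a point the paper simply asserts.
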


\begin{proof}
As described above, use a Morse function to decompose~$W$ into elementary
$k_i$--bordisms~$W_i$, so that
\[ W = W_0 \cup_{X_1} W_2 \cup_{X_2} \cdots \cup_{X_{m-1}} W_{m-1}. \]
Note that the preferred map~$f_0 = f_L \colon X_L \to B\Z^n$ induces a surjection
on first integral homology. From this we see that all $H_1(f_i;\Z)$ are surjections for $0 \leq i \leq a$.
Construct a new bordism~$W'$ by using
Lemma~\ref{lem:ElementaryReplacement} to replace $W_i$, for $0 \leq i \leq a$,
 by an elementary $2$--bordism~$Z_i$. We therefore have
\[ W' = Z_0 \cup_{X_1} Z_2 \cup_{X_2} \cdots \cup_{X_{a}}Z_{a}\cup_{X_{a+1}} W_{a+1} \cup\cdots \cup_{X_{m-1}} W_{m-1}.\]

Now we perform the same procedure from the other side.
For $b < i \leq m-1$, consider the elementary $3$--bordisms~$W_i$ as reversed elementary~$1$--bordisms~$\overline W_i$.
As above, substitute these with elementary~$2$--bordisms~$Z_i$ using
Lemma~\ref{lem:ElementaryReplacement}.
This results in the bordism~$\wh W$ from $X_L$ to $X_{L'}$, constructed by stacking $m$ elementary~$2$--bordisms
\[ \wh W \cong Z_0 \cup_{X_1} \cdots \cup_{X_{a}}Z_{a}\cup_{X_{a+1}} W_{a+1} \cup \cdots \cup_{X_{b}}W_{b} \cup_{X_{b+1}} \overline Z_{b+1}
\cup \cdots \cup \overline Z_{m-1}, \]
as desired.

\end{proof}

By Lemma~\ref{lemma-W-only-2-handles}, we may now assume that $W$ has only $2$--handles.  Note that none of the attaching circles for these $2$--handles links a component of $L$, for if an attaching circle were to link nontrivially with any component, then the resulting handle addition would kill an element of $H_1(X_L;\Z)$, and, since $H_1(f_L)\colon H_1(X_L;\Z)\to H_1(B\Z^n;\Z)$ is an isomorphism, $f_L$ would not extend over $W$.  Thus we see the following corollary.

\begin{corollary}\label{cor:surgery-along-curves}
Let $L, L'$ be two links with the same linking numbers.
Denote the double exterior by $M$ and suppose there is an $f \in \Xi$
such that $(M,f)=0 \in \Omega_3(B\Z^n)$.
Then $L$ can obtained from $L'$ by surgery on $S^3$ along curves~$\gamma_i$
that do not link~$L$, i.e.\ $\lk(\gamma_i, K_j) = 0$ for all $i$ and $j$.
\end{corollary}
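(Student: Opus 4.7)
The plan is to unpack the vanishing $(M,f)=0 \in \Omega_3(B\Z^n)$ into a $4$--dimensional nullbordism, simplify its handle decomposition using Lemma~\ref{lemma-W-only-2-handles}, and then read off a surgery description from the remaining $2$--handles. The hypothesis that $(M,f) = 0$ produces a $4$--manifold $W$ with $\partial W = M$ and a map $F \colon W \to B\Z^n$ extending~$f$. Splitting $\partial W = -X_L \cup X_{L'}$ presents $W$ as a bordism rel.\ boundary over $B\Z^n$ from $X_L$ to~$X_{L'}$.

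Next I will feed this bordism through Lemma~\ref{lemma-W-only-2-handles} to obtain a new bordism $\widehat{W}$ between the same link exteriors, consisting only of stacked elementary $2$--bordisms. Standard handle calculus then repackages $\widehat{W}$ as the trace of surgery on $X_L$ along a framed link $\gamma = \gamma_1 \cup \cdots \cup \gamma_m \subset X_L$: after isotoping attaching circles of later handles off the cocores of earlier handles, all the $\gamma_i$ can be taken disjoint and sitting inside $X_L$ itself. Surgery on $X_L$ along~$\gamma$ then produces~$X_{L'}$.

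The linking obstruction enters when we ask that $F|_{X_L} = f_L$ actually extend over each~$2$--handle. For an elementary $2$--bordism attached along $\gamma_i$, the extension of the $B\Z^n$--map exists if and only if the class $[\gamma_i] \in H_1(X_L;\Z)$ is sent to zero by $H_1(f_L)$. But $H_1(f_L) \colon H_1(X_L;\Z) \to H_1(B\Z^n;\Z) = \Z^n$ is the isomorphism sending $\mu_j \mapsto e_j$, and $[\gamma_i] = \sum_{j} \lnk(\gamma_i, K_j)\, [\mu_j]$, so the vanishing condition gives precisely $\lnk(\gamma_i, K_j) = 0$ for all $i$ and $j$.

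Finally I will promote the surgery on $X_L$ to a surgery on $S^3$. Since $\gamma \subset X_L = S^3 \setminus \nu L$, it makes sense as a framed link in $S^3$ disjoint from~$L$. Doing the surgery on $S^3$ along $\gamma$ produces a closed $3$--manifold which decomposes as $(X_L \text{ surgered along } \gamma) \cup (\text{the solid tori } \nu L) = X_{L'} \cup \nu L'$, where the tori are glued via the same meridian/longitude framing that makes $X_{L'}$ the exterior of $L' \subset S^3$; this uses that the bordism is rel.\ boundary and hence matches the boundary parametrisations. That glued closed manifold is therefore $S^3$, and the link $L \subset S^3$ is transformed into $L' \subset S^3$, as required. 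The main delicacy I expect is this last step: one has to check carefully that matching the boundary parametrisations rel.\ boundary forces the solid-torus filling to reproduce $S^3$ (and not merely some homology sphere), and that the framings inherited from the $2$--handles give a well-defined surgery in $S^3$ along curves unlinked from~$L$.
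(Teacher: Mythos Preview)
Your proposal is correct and takes essentially the same approach as the paper. The paper's argument is contained in the paragraph immediately preceding the corollary statement (so the corollary carries no separate proof): apply Lemma~\ref{lemma-W-only-2-handles} to get a bordism built only from $2$--handles, then observe that each attaching circle must be null in $H_1(X_L;\Z) \xrightarrow{\cong} H_1(B\Z^n;\Z)$ for $f_L$ to extend, forcing $\lk(\gamma_i,K_j)=0$. You have simply fleshed out the details the paper leaves implicit---pushing later attaching circles off belt spheres to place all $\gamma_i$ in $X_L$, and verifying that the rel.\ boundary condition guarantees the refilled manifold is $S^3$---and your concern about this last step is not a genuine obstacle, since the rel.\ boundary identification is by construction the one matching meridians and zero-framed longitudes.
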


Next we sweep a surface system through such a bordism $W$, in order to relate surface systems via surgery, as in~\cite[Section 3.2]{Davis16}.  The next theorem proves the implication (\ref{item:main-thm-2})$\implies$(\ref{item:main-thm-1}) of Theorem~\ref{thm:Main-intro}.

\begin{theorem}\label{thm:2->1}
Let $L, L'$ be two links with the same linking numbers.
Denote the double exterior by $M$, and suppose there is an $f \in \Xi \subseteq [M,B\Z^n]$
such that $(M,f)=0 \in \Omega_3(B\Z^n)$.
Then $L$ and $L'$ admit homeomorphic surface systems.
\end{theorem}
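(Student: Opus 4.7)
The plan is to use Corollary~\ref{cor:surgery-along-curves} to realise the bordism as Dehn surgery in $S^3$, and then to sweep a surface system through the surgery by tubing off all intersections with the surgery curves.

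First, I would apply Corollary~\ref{cor:surgery-along-curves} (using the symmetry of the bordism relation if necessary) to obtain a framed link $\gamma = \gamma_1 \cup \cdots \cup \gamma_m \subset S^3 \sm L$ with $\lk(\gamma_i, K_j) = 0$ for all $i,j$, and such that Dehn surgery along $\gamma$ transforms $(S^3, L)$ into $(S^3, L')$. Next, I would pick an arbitrary surface system $\Sigma = \Sigma_1 \cup \cdots \cup \Sigma_n$ for $L$ and, after a generic isotopy, arrange that $\Sigma$ meets each $\gamma_i$ transversely in a finite set of signed intersection points.

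The key step is to modify $\Sigma$ so that it becomes disjoint from $\gamma$. Because $\lk(\gamma_i, K_j) = 0$, the algebraic intersection $\Sigma_j \cdot \gamma_i$ vanishes, so the intersection points of $\Sigma_j$ with $\gamma_i$ pair off by sign. For each such pair I would tube $\Sigma_j$: remove small open disks at the two points and glue in an annulus running through a thin tubular neighborhood of a connecting arc $\alpha \subset \gamma_i$. Iterating over all pairs and over all $\gamma_i$, handling one curve at a time and choosing the tubes arbitrarily thin, produces a new surface system $\wt\Sigma$ for $L$ that is disjoint from $\gamma$. By keeping the tubes thin, a tube added along $\gamma_i$ avoids every $\gamma_{i'}$ with $i' \neq i$, while any new intersections between the tube and the other surfaces $\wt\Sigma_{j'}$ may be arranged, after a further small perturbation, to be transverse double and triple intersections only.

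Finally, since $\wt\Sigma$ lies outside a tubular neighborhood of $\gamma$ that is excised during the Dehn surgery, the surface system $\wt\Sigma$ persists unchanged as a subset of the post-surgery copy of $S^3$. Under the identification of this post-surgery manifold with $S^3 \supset L'$ induced by the bordism, the boundary $\partial\wt\Sigma = L$ is carried to $L'$, so $\wt\Sigma$ becomes a surface system $\Sigma'$ for $L'$. The identity on points gives a homeomorphism $\wt\Sigma \to \Sigma'$ which manifestly preserves the orientations of the components and of the pairwise and triple intersection submanifolds, as required by Definition~\ref{defn:homeo-surface-systems}. The main obstacle I anticipate is the transversality verification in the tubing step: one must confirm that the iterated surgeries on the $\Sigma_j$ produce a valid surface system with only double and triple intersections, and that successive tubings along different $\gamma_i$ do not interfere with one another, an argument in the spirit of the sweeping argument in~\cite[Section~3.2]{Davis16}.
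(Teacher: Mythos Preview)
Your proposal is correct and follows essentially the same strategy as the paper: invoke Corollary~\ref{cor:surgery-along-curves} to obtain the surgery description, arrange a surface system for $L$ disjoint from the surgery curves, and then observe that it survives unchanged to give a surface system for $L'$. The paper handles the disjointness step by citing~\cite[proof of Theorem~2~(3)$\Rightarrow$(2)]{Davis16}, whereas you spell out the tubing argument directly; your version is a valid instantiation of that citation, and your caveats about transversality and non-interference of successive tubings are exactly the checks needed.
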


\begin{proof}
By Corollary \ref{cor:surgery-along-curves}, we have that $L'$ may be obtained
from $L$ by surgery along curves that have trivial linking
number with $L$. By~\cite[proof~of~Theorem~2~(3)~implies~(2)]{Davis16}, we can and will pick a surface system for $L$ that is disjoint from the collection of surgery curves.
After the surgery this becomes a surface system for $L'$ that is homeomorphic
to the former surface system for $L$, since we have only changed the ambient space.

\end{proof}
\begin{remark}\label{rem:CSweep}
Instead of a surface system, one can also arrange for
a C-complex to be disjoint from the surgery curves.
The C-complex can then be swept through the~$B\Z^n$--bordism
to produce a C-complex for $L'$. As a consequence,
under the hypothesis of Theorem~\ref{thm:2->1}, the links~$L$
and $L'$ also admit homeomorphic C-complexes.
\end{remark}

\section{Milnor numbers}\label{section:milnor-numbers}

For a link~$L$ with non-vanishing linking numbers, Milnor's triple linking numbers~\cite{Milnor:1957-1} are not well-defined integers. Mellor and Melvin gave a geometric interpretation of the triple Milnor numbers~\cite[Theorem p.~561]{Mellor03} that we will use to overcome the ambiguity.
In this section we refine the triple Milnor numbers in the case that the link components
have non-vanishing linking numbers.

Let us now recall the Mellor-Melvin~\cite{Mellor03} method to compute the triple Milnor numbers from a
surface system.  Let $L$ be a link and fix a surface system~$\Sigma$ for the link~$L$.
Recall that the \emph{triple intersection number}~$t_{ijk}(\Sigma) \in \Z$
is the number of intersection points counted with sign between the components~$\Sigma_i$,$\Sigma_j$,
and $\Sigma_k$. It is skew-symmetric (alternating) under permutations of the indices.

Denote the clasp-word of the component~$K$ by $w_K$.
Additionally, fix a point~$b_K \in K \sm I_K$ away from the intersections for
each component~$K$.
Reading off the cyclic words~$w_K$, starting from $b_K$ and in the positive direction,
we obtain a (linear) word~$\wt w_K$.

\begin{definition}\label{defn:SignedOccur}
Let $S$ be a finite set.
Let $w = s_1^{\varepsilon_1} \cdot \ldots \cdot s_m^{\varepsilon_m}$
be a word in the letters~$s_i^{\varepsilon_i} \in S \times \{\pm 1\}$
and let $r,s \in S$.
An \emph{$rs$--decomposition}~$(i,j)$ of $w$ is a pair
of indices with $i < j$ such that $s_i = r^{\pm 1}$ and $s_j = s^{\pm 1}$.
The \emph{sign} of a decomposition is
$\sign_w (i,j) = \varepsilon_i \cdot \varepsilon_j \in \{ \pm 1\}$.
Denote the set of $rs$--decompositions by
$D_{rs} (w) = \{ (i,j) \text{ is an $rs$--decomposition} \}$.
The \emph{signed occurrence}~$e_{rs}$ of the pair~$r,s$  is the integer
\[ e_{rs}(w) = \sum_{(i,j) \in D_{rs}} \sign_w (i,j). \]
\end{definition}

Let $e_r$ count the signed occurrences of the letter~$r$ in a word.
The following relations are helpful for computations~\cite[p.~559]{Mellor03}:
\begin{equation}\label{eqn:e-relations}
\begin{split}
e_r(u \cdot v) &= e_r(u) + e_r(v)\\
e_{rs}(u \cdot v) &=e_{rs}(u) + e_{rs}(v) +e_r(u) e_s(v) \\
e_{rs}(u) + e_{sr}(u) &= e_r(u) e_s(u),
\end{split}
\end{equation}
where $u,v$ are words, and $r,s \in S$.

\begin{remark}\label{rem:MagnusExp}
A word~$w$ in the letters~$S \times \{ \pm 1 \}$ can be considered as
an element~$g_w$ in the free group~$F$ over the set~$S$. The Magnus
expansion~\cite[Section~5.5]{MKS76} of $g_w$ is an element in the non-commutative power series ring~$\Z[[S]]$
obtained by mapping
\begin{align*}
s^{+1} &\mapsto 1 + X_s\\
s^{-1} &\mapsto 1 - X_s + X_s^2 - X_s^3 + \ldots.
\end{align*}
It is a short computation to see that the coefficient of $X_i X_j$ in the expansion of
$g_w$ is exactly $e_{ij}(w)$.
Also if $g_w \in F_3$ is in the third lower central series quotient of $F$,
then $e_{ij}(w) = 0$, which follows from the relations~\eqref{eqn:e-relations}.
\end{remark}

Given a surface system and a choice of base point~$b_K \in K$ for each component~$K$,
define the integer~$m_{ijk} \in \Z$ to be
\[ m_{ijk} = e_{ij}( \wt w_k) + e_{jk}( \wt w_i) + e_{ki}( \wt w_j), \]
with $1 \leq i,j,k \leq n$ distinct integers.

\begin{example}\label{example:example-link}
We provide a sample computation of the integers $m_{ijk}$.
Let $L = K_1 \cup \cdots \cup K_4$ be the link depicted in Figure~\ref{fig:example-link}.  The figure also shows base points and orientations.  We use a fairly obvious C-complex as the surface system for computing the clasp-words, and obtain:
\begin{align*}
  \wt{w}_1 &:=  2342^{-1}2\\ 
  \wt{w}_2 &:=  3411^{-1}313^{-1}\\ 
  \wt{w}_3 &:=  4122^{-1}2 \\
  \wt{w}_4 &:= 123.
\end{align*}

\begin{figure}
\begin{tikzpicture}[scale=1.2]
\tikzset{->-/.style={decoration={
  markings,
  mark=at position #1 with {\arrow{>}}},postaction={decorate}}}

\tikzset{-<-/.style={decoration={
  markings,
  mark=at position #1 with {\arrow{<}}},postaction={decorate}}}

\tikzset{bull/.style={decoration={
  markings,
  mark=at position #1 with {\draw[draw=black, fill=black] circle (1.5pt);}},postaction={decorate}}}

\begin{scope}[shift={(-5,0)}]

\begin{scope}[shift={(0,0)}, yscale=1,xscale=1]
\begin{knot}[
clip width=5,
]

\strand[thick, black] (0.5,0)
to [out=left, in=right] (0.1, 0)
to [out=left, in=left, looseness=2] (0.1,0.5)
to [out=right, in=left] (0.5, 0.5);

\strand[thick, black] (-0.5, 0)
to [out=left, in=right] (-0.1, 0)
to [out=right, in=right, looseness=2] (-0.1,0.5)
to [out=left, in=right] (-0.5, 0.5);

\flipcrossings{1}
\end{knot}
\end{scope}

\begin{scope}[shift={(0,2)}, yscale=1,xscale=1]
\begin{knot}[
clip width=5,
]

\strand[thick, black] (0.5,0)
to [out=left, in=right] (0.1, 0)
to [out=left, in=left, looseness=2] (0.1,0.5)
to [out=right, in=left] (0.5, 0.5);

\strand[thick, black] (-0.5, 0)
to [out=left, in=right] (-0.1, 0)
to [out=right, in=right, looseness=2] (-0.1,0.5)
to [out=left, in=right] (-0.5, 0.5);

\flipcrossings{2}
\end{knot}
\end{scope}


\draw [thick] (0.5, 0.5)
to [out=right, in=right] (0.5,2);

\draw [thick, -<-=0.6] (0.5, 0)
to [out=right, in=down, looseness=1.3] (1.3, 2)
to [out=up, in=left] (1.5, 2.2);

\draw [thick] (0.5, 2.5)
to [out=left, in=right] (1.5,2.5);

\begin{scope}[shift={(0,0)}, yscale=1,xscale=1]
\begin{knot}[
clip width=5,
]

\strand [thick, black] (-0.5,0.5)
to [out=left, in=left, looseness=1.5] (-0.5,2);

\strand[thick, ->-=0.5] (1.5, -0.5)
to [out=left, in=right] (-0.7, -0.5)
to [out=left, in=right] (-1,0)
to [out=left, in=left, looseness=1.3] (-1,2.5)
to [out=right, in=left] (-0.5,2.5);

\strand [thick, black] (-2.5, 2)
to [out=left, in=right] (-2.1, 2)
to [out=right, in=right, looseness=4] (-2.1, 1.6)
to [out=left, in=right] (-2.4, 1.6);

\strand [thick, black] (-2.4, 1.6)
to [out=left, in=left] (-2.4, 1.4)
to [out=right, in=left] (-1.3, 1.4)
to [out=right, in=right, looseness=3] (-1.3, 1)
to [out=left, in=right] (-2.4, 1)
to [out=left, in=up] (-2.5, 0.9)
to [out=down, in=left, looseness=1.4] (-1.5, -0.7)
to [out=right, in=left] (1.5,-0.7);

\flipcrossings{2,4, 5, 6}
\end{knot}

\draw [thick, black] (-0.5, 0)
to [out=left, in=left] (-0.5, -0.2)
to [out=right, in=left] (1.5, -0.2);

\draw[thick, -<-=0.2] (-2.5,2)
to [out=left, in=up] (-2.8,1.5)
to [out=down, in=left, looseness=1.15] (-1.8, -1)
to [out=right, in=left] (1.5, -1);

\end{scope}


\end{scope}


\begin{scope}[shift={(-0.5,0.25)}]
\begin{scope}[shift={(0.75,-0.75)}, yscale=1,xscale=1]
\begin{knot}[
clip width=5,
]

\strand[thick, black] (0.5,0)
to [out=left, in=right] (0.1, 0)
to [out=left, in=left, looseness=2] (0.1,0.5)
to [out=right, in=left] (0.5, 0.5);

\strand[thick, black] (-0.4, 0)
to [out=left, in=right] (-0.1, 0)
to [out=right, in=right, looseness=2] (-0.1,0.5)
to [out=left, in=right] (-0.5, 0.5);

\flipcrossings{2}
\end{knot}
\end{scope}

\begin{scope}[shift={(-0.75,-0.75)}, yscale=1,xscale=1]
\begin{knot}[
clip width=5,
]

\strand[thick, black] (0.4,0)
to [out=left, in=right] (0.1, 0)
to [out=left, in=left, looseness=2] (0.1,0.5)
to [out=right, in=left] (0.5, 0.5);

\strand[thick, black] (-0.5, 0)
to [out=left, in=right] (-0.1, 0)
to [out=right, in=right, looseness=2] (-0.1,0.5)
to [out=left, in=right] (-0.5, 0.5);

\flipcrossings{2}
\end{knot}
\end{scope}

\draw [thick,black, bull=0.0000001] (-0.35,-0.75)
to [out=right, in=right] (-0.35, -0.95)
to [out=left, in=right] (-3, -0.95);

\draw[thick, ->-=0.1] (0.35, -0.75)
to [out=left, in=right] (-0.35, -1.25)
to [out=left, in=right] (-3, -1.25);

\draw [thick, black, ->-=0.2] (-1.25,-0.75)
to [out=left, in=down] (-1.75, -0.3)
to [out=up, in=right] (-1.85, -0.2)
to [out=left, in=right] (-3, -0.75);

\draw[thick, bull=0.2] (-1.25,1.25)
to [out=left, in=up] (-1.75, 0.3)
to [out=down, in=right] (-1.85, 0.2)
to [out=left, in=right, looseness=0.6] (-3, -0.45);

\draw [thick, black, -<-=0.4, bull=0.7] (1.25,-0.75)
to [out=right, in=right, looseness=1.2] (1.25,1.25);

\begin{knot}[
clip width=5,
clip radius=5pt,
]

\strand [thick, black] (1.25, 0.75)
to [out=right, in=right] (1.25, -0.25);

\strand[thick, black] (-1.25, 0.75)
to [out=left, in=left, looseness=1.5] (-1.25, 0.4)
to [out=right, in=left] (1.25, 0.4)
to [out=right, in=right, looseness=6] (1.25, 0.1)
to [out=left, in=right] (-1.25, 0.1)
to [out=left, in=left, looseness=1.5] (-1.25, -0.25);

\strand [thick, black] (-0.25, -0.25)
to [out=right, in=left, looseness=0.5] (0,1)
to [out=right, in=left, looseness=0.5] (0.25, -0.25);

\strand [thick, black] (-1.25,0.75)
to [out=right, in=left] (-0.85, 0.75)
to [out=right, in=right, looseness=2] (-0.85, 1.25)
to [out=left, in=right] (-1.25, 1.25);

\strand[thick, black] (1.25,0.75)
to [out=left, in=right] (0.85, 0.75)
to [out=left, in=left, looseness=2] (0.85,1.25)
to [out=right, in=left] (1.25, 1.25);

\strand[thick, black] (-0.3,1.25)
to [out=left, in=right] (-0.65, 1.25)
to [out=left, in=left, looseness=2] (-0.65, 0.75)
to [out=left, in=right](0.65, 0.75)
to [out=right, in=right, looseness=2] (0.65,1.25)
to [out=left, in=right] (0.3,1.25);

\flipcrossings{1,3,4,5,6,8,10,12}
\end{knot}

\draw[thick, -<-=0.2, bull=0.05] (0.3, 1.25)
to [out=left, in=right, looseness=0.9] (-0.35, 2.25)
to [out=left, in=right] (-3, 2.25);

\draw[thick] (-0.3, 1.25)
to [out=right, in=right, looseness=0.7] (-0.5, 1.95)
to [out=left, in=right] (-3, 1.95);

\end{scope}

\node at (-0.3,-1) {$1$};
\node at (-2.5,1) {$2$};
\node at (-0.2,1.8) {$3$};
\node at (1.3,1.6) {$4$};

\end{tikzpicture}

\caption{A $4$--component link $L$ with base points and an orientation. Start with the simple $4$--component link on the right, with linking numbers all equal to $1$, and band $3$ of the components into a copy of the Borromean rings.  The left-most component of the Borromean rings is drawn so as to aid with  visualisation of a C-complex.}
\label{fig:example-link}
\end{figure}

\noindent From these clasp-words we can compute the integers $m_{ijk}$.
\begin{align*}
  m_{123} &= e_{12}( \wt w_3) + e_{23}( \wt w_1) + e_{31}( \wt w_2) = 1+1+2 = 4\\
  m_{124} &= e_{12}( \wt w_4) + e_{24}( \wt w_1) + e_{41}( \wt w_2) = 1+1+1=3\\
  m_{134} &= e_{13}( \wt w_4) + e_{34}( \wt w_1) + e_{41}( \wt w_3) = 1+1+1 =3\\
  m_{234} &= e_{23}( \wt w_4) + e_{34}( \wt w_2) + e_{42}( \wt w_3) = 1+1+1=3.
\end{align*}
This completes the sample computation.  We will use these values of the $m_{ijk}$ in Example~\ref{remark:total-quotient-is-a-massive-emphatic-awesome-refinement} below.
\end{example}

\subsection{Indeterminacy from choice of base points}\label{section:basepoints}

The integer~$m_{ijk}$ depends on the choice of linearisation of the clasp-word.
The exact dependency is described in the following lemmata. Note that
moving the base point~$b_k$ of the component~$K_k$ over an
intersection point~$j^{\pm 1} \in I_{K,j}$ changes $\wt w_k= j^{\pm 1} v$ to $v j^{\pm 1}$.

\begin{lemma}\label{lem:LkCount} Suppose $i,j,k$ are distinct.  The difference of the two decomposition sums is
\[ e_{ij}(v j^{\pm 1}) - e_{ij}(j^{\pm 1} v) = \pm \lk(K_i, K_k), \]
where all three signs are the same.
\end{lemma}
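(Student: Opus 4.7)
The plan is to apply the relations in~\eqref{eqn:e-relations} directly. Write $u := j^{\pm 1}$, viewed as a one-letter word, so that $e_i(u) = 0$ (since $i \neq j$), $e_j(u) = \pm 1$, and $e_{ij}(u) = 0$. Then by the second relation,
\[ e_{ij}(u \cdot v) = e_{ij}(u) + e_{ij}(v) + e_i(u) e_j(v) = e_{ij}(v), \]
and
\[ e_{ij}(v \cdot u) = e_{ij}(v) + e_{ij}(u) + e_i(v) e_j(u) = e_{ij}(v) \pm e_i(v). \]
Subtracting gives $e_{ij}(v j^{\pm 1}) - e_{ij}(j^{\pm 1} v) = \pm e_i(v)$, with all three signs matching.

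It remains to identify $e_i(v)$ with $\lk(K_i, K_k)$. Since $v$ is obtained from the linearised clasp-word $\wt w_k$ by deleting a single letter with label $j \neq i$, the signed count of $i$-labelled letters is unchanged: $e_i(v) = e_i(\wt w_k)$. By definition of the clasp-word, $e_i(\wt w_k)$ is the signed intersection count $K_k \cdot \Sigma_i$ in $S^3$, which equals $\lk(K_k, K_i) = \lk(K_i, K_k)$ because $\partial \Sigma_i = K_i$ and the linking number is symmetric.

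There is no main obstacle here; the lemma is a short calculation once the combinatorial identities~\eqref{eqn:e-relations} are in hand, and the only substantive (but standard) ingredient is the identification of $e_i(\wt w_k)$ with a linking number via the fact that $\Sigma_i$ is a Seifert-type surface for $K_i$.
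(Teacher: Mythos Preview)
Your proof is correct and follows essentially the same route as the paper: both apply the second relation of~\eqref{eqn:e-relations} with $u = j^{\pm 1}$ to reduce the difference to $\pm e_i(v)$, and then identify $e_i(v) = e_i(\wt w_k)$ with $\lk(K_i,K_k)$ via the intersection count of $K_k$ with $\Sigma_i$. Your presentation is slightly more explicit in computing the two expansions separately before subtracting, but the argument is the same.
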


\begin{proof}
A consequence~\cite[p.~561]{Mellor03} of the second relation in~(\ref{eqn:e-relations}) above is
\begin{align*}
e_{ij}(v j^{\pm 1}) - e_{ij}( j^{\pm 1} v ) &= e_i(v) e_j(j^{\pm 1}) + e_i(j^{\pm 1})e_j(v) \\
& = \pm e_i(v)
\end{align*}
since $e_i(j^{\pm 1}) = \pm \delta_{ij}$.

The statement then follows from the fact that $e_i(v) = e_i(v j^{\pm 1})$ counts
the intersection arcs between $K_i$ and $K_k$ and so is $\lk(K_i , K_k)$.
\end{proof}

\begin{remark}
We note that the $m_{ijk}$ are not in general skew-symmetric with respect to switching two of the indices. That is, it need not be true that the integers $m_{ijk}$ and  $-m_{ikj}$ are equal.  Post hoc we will know that this skew-symmetry holds \emph{modulo the indeterminacy}.  But at this stage we do not have any skew-symmetry for the $m_{ijk}$.
\end{remark}

We will collect the integers~$m_{ijk}$ into a single vector~$m$.
Note that $m_{ijk}$ is invariant under cyclic
permutations of the indices.
Consider the alternating module~$W = \bigwedge^3  \Z\langle X^k \mid 1 \leq k \leq n \rangle$
on the free $\Z$--module generated by the $X^k$.
We abbreviate~$X^i \wedge X^j \wedge X^k$ to~$X^{[ijk]}$.
Consider the following elements
\[ m = \sum_{i<j<k} m_{ijk} X^{[ijk]} \in W, \]
Analogously, define the total triple intersection number by
\[ t = \sum_{i<j<k} t_{ijk} X^{[ijk]} \in W. \]

\begin{lemma}\label{lem:IndetermineVectors}
Let $L$ be a link with surface system~$\Sigma$.
Let $b_K$ and $b_K'$ be two collections of base points for every component of
the link.
Denote the associated elements in $W$ by $m$ and~$m'$.
Then
\[ m' - m \in \vspan  \big\{v_{s,r} \mid s \neq r \big \},\]
where
\[ v_{s,r} = \sum_{i=1}^n \lk(K_i, K_r) X^{[isr]}.\]
\end{lemma}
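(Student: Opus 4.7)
The plan is to reduce to elementary base point moves. Any change of base points is a concatenation of moves where we slide a single base point $b_k$ past a single intersection point $x \in I_{K_k, r}$, changing $\wt w_k = r^{\varepsilon} v$ to $v\, r^{\varepsilon}$ (where $\varepsilon = \varepsilon_x$ and $r \neq k$). By additivity of the claim in such moves, it suffices to show that each elementary move changes $m$ by $\pm v_{r, k}$.

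For such an elementary move, only the word $\wt w_k$ changes. So in $m = \sum_{i<j<k} m_{ijk} X^{[ijk]}$, the only affected triples are those containing the index $k$. For a triple $\{a, b, k\}$ with $a, b \neq k$ and $a < b$, cyclic invariance of $m_{ijk}$ lets me identify the contribution of $\wt w_k$ to $m_{\text{triple}}$: it is always $e_{ab}(\wt w_k)$ when $k$ sits in the odd-index position of the cyclic order within the ascending sort, and $e_{ba}(\wt w_k)$ otherwise. I would check directly that both contributions, after being weighted by the corresponding basis vector $X^{[\cdots k \cdots]}$ under the alternating convention, contribute the same signed summand $\Delta e_{ab}(\wt w_k) \cdot X^{[abk]}$. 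Hence
\[
\Delta m \;=\; \sum_{\substack{a<b \\ a,b \neq k}} \Delta e_{ab}(\wt w_k)\, X^{[abk]}.
\]

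Next, I would compute $\Delta e_{ab}(\wt w_k)$ using the Mellor–Melvin relations~\eqref{eqn:e-relations}: a direct calculation, following the idea of Lemma~\ref{lem:LkCount}, gives
\[
\Delta e_{ab}(\wt w_k) \;=\; \varepsilon\bigl(e_a(v)\,\delta_{br} - \delta_{ar}\,e_b(v)\bigr).
\]
Both nonzero cases contribute only when one of $a, b$ equals $r$; and whenever the other index $i$ differs from $r$, one has $e_i(v) = e_i(\wt w_k) = \lk(K_i, K_k)$, since the signed count of letters $i$ in $\wt w_k$ is precisely the count of intersections of $K_k$ with $\Sigma_i$. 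Combining the two cases (and using the alternating symmetry $X^{[irk]} = -X^{[rik]}$ to merge them) collapses the double sum to the single expression
\[
\Delta m \;=\; \varepsilon \sum_{i=1}^n \lk(K_i, K_k)\, X^{[irk]} \;=\; \varepsilon\, v_{r,k},
\]
where terms with $i \in \{r, k\}$ vanish automatically in the alternating module $W$.

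The main obstacle I expect is the bookkeeping around cyclic versus alternating symmetry: $m_{ijk}$ is defined only with cyclic symmetry, while $W$ is fully alternating, so care is needed to confirm that the two possible contributions $e_{ab}$ and $e_{ba}$ assemble consistently into $X^{[abk]}$. Once that is verified, the rest is a mechanical application of the relations~\eqref{eqn:e-relations} and Lemma~\ref{lem:LkCount}, together with an appropriate relabeling to match the pattern $v_{s,r} = \sum_i \lk(K_i,K_r) X^{[isr]}$ with $(s,r) = (r_{\text{old}},k)$.
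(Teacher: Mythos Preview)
Your proposal is correct and follows essentially the same route as the paper. Both arguments reduce to a single elementary base point move, and the crux in both is the identity $\Delta e_{ba}(\wt w_k) = -\Delta e_{ab}(\wt w_k)$, which follows from the third relation in~\eqref{eqn:e-relations} together with the invariance of $e_a,e_b$ under cyclic permutation of letters; the paper carries this out by splitting into the two index-sets $Z_{s,r}$ and $Z_{r,s}$, while you package the same step as ``verifying that the $e_{ab}$ and $e_{ba}$ contributions assemble consistently into $X^{[abk]}$''. Your notation swaps the roles of the paper's $(s,r)$ for your $(r,k)$, which you correctly note at the end.
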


\begin{proof}
We may assume that for all but one component~$K = K_r$, the base points agree.
Furthermore, we can assume that $\wt w_r = s^{\pm 1} v$
and $\wt w_r' = v s^{\pm 1}$ for a suitable $s$, i.e.\ the chosen base points
are separated only by a single intersection point of $K$ with~$\Sigma_s$.
We claim that $m' - m$ is a multiple of $v_{s,r}$, whose verification
is the remainder of the proof.
Recall that $m_{ijk}$ is invariant under cyclic permutation of the indices.
In the upcoming calculation, $r$ and $s$ are fixed distinct integers $1 \leq r,s \leq n$.  Write
\[Z_{s,r} := \{i \in \{1,\dots,n\} \mid (isr) \text{ can be ordered by an even permutation}\}\]
Here we go:

\begin{align*}
m' - m &= \sum_{i<j<k} \Big( m_{ijk}' - m_{ijk} \Big) X^{[ijk]}\\
&= \sum_{i\in Z_{s,r}} \Big( m_{isr}' - m_{isr} \Big) X^{[isr]}
 +  \sum_{i \in Z_{r,s}} \Big( m_{sir}' - m_{sir} \Big) X^{[sir]}\\
&=\sum_{i\in Z_{s,r}} \Big( e_{is}( \wt w_r') - e_{is}( \wt w_r) \Big) X^{[isr]}
	+ \sum_{i\in Z_{r,s}} \Big( e_{si}( \wt w_r') - e_{si}( \wt w_r) \Big) X^{[sir]}\\
&=\sum_{i\in Z_{s,r}} \Big( e_{is}( \wt w_r') - e_{is}( \wt w_r) \Big) X^{[isr]}
	- \sum_{i\in Z_{r,s}} \Big( e_{is}( \wt w_r') - e_{is}( \wt w_r) \Big) X^{[sir]} \\
&=\sum_{i=1}^n \Big( e_{is}( \wt w_r') - e_{is}( \wt w_r) \Big) X^{[isr]}.
\end{align*}
The penultimate equality follows from the third relation of (\ref{eqn:e-relations}), since
\begin{align*}
e_{si}( \wt w_r') - e_{si}( \wt w_r) &= -e_{is}( \wt w_r') + e_i( \wt w_r')e_s( \wt w_r') + e_{is}( \wt w_r) - e_i(\wt w_r)e_s(\wt w_r) \\
 &=  -(e_{is}( \wt w_r') - e_{is}( \wt w_r))
\end{align*}
since $e_{\ell}( \wt w_r) =  e_{\ell}(\wt w_r')$ for all $\ell$.  To see the final equality, note that $X^{[isr]} = -X^{[sir]}$ and $X^{[isr]}=0$ if $i=r$ or $i=s$.
Now apply Lemma~\ref{lem:LkCount} to obtain:
\begin{align*}
m' - m = \sum_{i=1}^n \Big( \pm \lk(K_i, K_r) \Big) X^{[isr]} = \pm v_{s,r}
\end{align*}
as desired.
\end{proof}

\begin{definition}\label{defn:total-Milnor-invariant}
The \emph{total Milnor quotient}~$\Pow$ is the $\Z$--module obtained
as the quotient
\[ \Pow := W / \vspan\{ v_{s,r} \}, \]
where the elements~$v_{s,r}$ are defined in Lemma~\ref{lem:IndetermineVectors}.
For a link~$L$ with surface system~$\Sigma$, we call
the element \[\mu(L):=m(\Sigma)-t(\Sigma) \in \Pow\] constructed above the \emph{total Milnor invariant of $L$}.
\end{definition}

\begin{remark}
By Lemma~\ref{lem:IndetermineVectors}, the total Milnor invariant of $(L,\Sigma)$ is independent
of the choice of base points~$b_K$ for each component~$K$.
We show in Theorem~\ref{thm:MilnorWellDef} below that it is also independent of
the choice of surface system~$\Sigma$.  This justifies the absence of the surface system $\Sigma$ from the notation $\mu(L)$ and from the nomenclature.
\end{remark}

Denote the greatest common divisor of the linking numbers involving components~$i$,~$j$ and~$k$ by
\[ \Delta_{ijk} := \gcd \lk(K_i, K_j), \lk(K_j, K_k), \lk(K_k, K_i).\]
The triple Milnor invariants~$\ol{\mu}_L(ijk) \in \Z / \Delta_{ijk}$
are recovered as the coefficient of~$X^{[ijk]}$, as was shown in~\cite[p.~561]{Mellor03}.
By considering all the Milnor invariants simultaneously, the total Milnor invariant~$\mu(L)$
refines the collection of Milnor invariants~$\ol{\mu}_L(ijk)$, considered with their individual indeterminacy. This refinement is non-trivial, as
can be seen from the following example, which proves Proposition~\ref{prop:improved} from the introduction.

\begin{example}\label{remark:total-quotient-is-a-massive-emphatic-awesome-refinement}
Consider $4$--component links $L=K_1 \cup \cdots \cup K_4$ with $\lk(K_i, K_j)=1$ for all $1 \leq i,j \leq 4$, $i \neq j$.  Then $\Delta_L(ijk)=1$ for all multi-indices $ijk$, so all the Milnor invariants $\ol{\mu}_L(ijk)$ lie in the trivial group, with their classical indeterminacy.
By computing the $v_{s,r}$ we see that $\Pow$ is the cokernel of the linear map $\Z^6 \to \Z^4$ represented by the matrix
\[A = \begin{pmatrix}
        1 & -1 & 0 & 1 & 0 & 0 \\
        0 & 1 & -1 & 0 & 0 & 1 \\
        1 & 0 & -1 & 0 & 1 & 0 \\
        0 & 0 & 0 & 1 & -1 & 1
      \end{pmatrix}.\]
Simplifying this matrix with row operations, we compute that $\Pow \cong \Z^4/A\Z^6 \cong \Z$, and indeed writing
\begin{align*}
  x_1 &:= m_{123} -t_{123} \\
  x_2 &:= m_{124} -t_{124} \\
  x_3 &:= m_{134} -t_{134} \\
  x_4 &:= m_{234} -t_{234},
\end{align*}
the map
\[\begin{array}{rcl}
\Z^4/A\Z^6 & \to & \Z \\
(x_1,x_2,x_3,x_4) &\mapsto & x_1-x_2 +x_3 -x_4
\end{array}\]
is an isomorphism.

Consider the link $L$ from Example~\ref{example:example-link}.  This link was constructed by taking a simple $4$--component link with all linking numbers equal to $1$, and banding three of the components into a Borromean rings.  We computed the $m_{ijk}$ for this link in that example.  We used a C-complex for the computation, so all $t_{ijk}=0$.  Therefore we obtain \[x_1 = m_{123}=4,\, x_2=m_{124} =3,\, x_3=m_{134}=3 \text{ and } x_4 = m_{234}=3.\]  It follows that \[x_1-x_2+x_3-x_4 = 4-3+3-3 = 1,\] so this link has nontrivial total Milnor
invariant in $\Pow$.

Consider the link $L'$ obtained by replacing the Borromean rings on the left of Figure~\ref{fig:example-link} with an unlink before banding i.e.\ not banding at all.  Then for the link~$L'$ we have
\[\wt{w}_1 = 234,\, \wt{w}_2 = 341,\, \wt{w}_3 = 412 \text{ and }\wt{w}_4 = 123,\] from which it is straightforward to compute that $x_1=x_2=x_3=x_4 =3$.  Thus $x_1-x_2+x_3-x_4 =0$ and so the links~$L$ and~$L'$ determine distinct elements in~$\Pow$. Taking Theorem~\ref{thm:Main-intro} as given for a moment, we see that $L$ and $L'$ do not admit homeomorphic surface systems.

We can also construct a link $L_m$ by taking the Borromean rings, replacing the component labelled $1$ with its $(m,1)$ cable, and then performing the banding as in the construction of $L$.  We assert that this results in a link for which $x_1-x_2+x_3-x_4 = m$, so that all of $\Pow$ can be realised.  To see this, note that the only changes in the clasp-words from Example~\ref{example:example-link} are that~$\wt{w}_2$ becomes~$3411^{-m}31^m 3^{-1}$ and~$\wt{w}_1$ becomes $2342^{-m}2^m$. The change in $\wt{w_1}$ has no effect. We still have $e_{41}(\wt{w}_2) =1$ and $e_{34}(\wt{w}_2) =1$, but now $e_{31}(\wt{w}_2) = 1+m$, so $m_{123}=3+m$ and we still have $m_{124}=m_{134}=m_{234}=3$.  Then $x_1-x_2+x_3-x_4 =m$ as claimed.
\end{example}

\begin{remark}\label{remark:rank-of-M}
The number of relations in $\Pow$ is $n(n-1)$, while the rank of $\Z^n \wedge \Z^n \wedge \Z^n$ is $n(n-1)(n-2)/6$. Thus for $n\geq 9$, we have a presentation of $\Pow$ having more generators than relations, so $\Pow$ has nonzero rank.
In the case that every triple of indices contains a pair whose associated components have non-vanishing linking number, the rank of the classical Milnor quotient is zero, since it is a product of finite cyclic groups.  Moreover, by banding into copies of the Borromean rings, using the surface system for the Borromean rings with empty clasp-words and one triple point depicted in Figure~\ref{fig:Borromean-standard-surfaces}, we can replace the realisation construction in the example above and realise any element of $\Pow$.  We preferred to use the construction above in order to provide a nontrivial example of clasp-word computation.
It follows that $\Pow$ is always a nontrivial refinement whenever $n \geq 9$ and every triple of indices contains a pair whose associated components have non-vanishing linking number.

\begin{figure}
\begin{tikzpicture}[scale=0.7]


\tikzset{
    partial ellipse/.style args={#1:#2:#3}{
        insert path={+ (#1:#3) arc (#1:#2:#3)}
    }
}
 \draw[thin] (0,0) [x={(1,0)},y={(0,1)}, partial ellipse=-5:152:2.2 and 4.2];
  \draw[thin, opacity=0.3] (0,0) [x={(1,0)},y={(0,1)}, partial ellipse=166:176.5:2.2 and 4.2];
  \draw[thin, opacity=0.3] (0,0) [x={(1,0)},y={(0,1)}, partial ellipse=176:194.5:2.2 and 4.2];
  \draw[thin, opacity=0.3] (0,0) [x={(1,0)},y={(0,1)}, partial ellipse=202:217.5:2.2 and 4.2];
  \draw[thin] (0,0) [x={(1,0)},y={(0,1)}, partial ellipse=230:331.5:2.2 and 4.2];
    \draw[thin, opacity=0.3] (0,0) [x={(1,0)},y={(0,1)}, partial ellipse=343:360.5:2.2 and 4.2];
  \draw[thick, blue] (0,0) [x={(1,0)},y={(0,1)}, partial ellipse=155.5:-4:2.4 and 4.4]
   to [out=down, in=130] (2.45, -0.49);
   \draw[thick, blue, dashed] (0,0) [x={(1,0)},y={(0,1)}, partial ellipse=169:176:2.4 and 4.4]
   to [out=down, in=right] (-2.53, 0.15);
   \draw[thick, blue] (0,0) [x={(1,0)},y={(0,1)}, partial ellipse=147.5:-3:2 and 4]
    to [out=down, in=right] (1.86, -0.39);
      \draw[thick, blue, dashed] (0,0) [x={(1,0)},y={(0,1)}, partial ellipse=162:177.5:2 and 4]
      to [out=down, in=left] (-1.8, 0.05);

  \draw[thick, blue] (0,2.6) [x={(0.08,0)},y={(0,0.07)}, partial ellipse=-90:90:2.4 and 4.4];
  \draw[thick, blue] (0,-2.6) [x={(0.08,0)},y={(0,0.07)}, partial ellipse=-90:90:2.4 and 4.4];

 \draw[thin] (0,0) [x={(1,-0.1)},y={(0,1)}, partial ellipse=-118:53:4 and 1.7];
 \draw[thin, opacity=0.3] (0,0) [x={(1,-0.1)},y={(0,1)}, partial ellipse=64:107:4 and 1.7];
 \draw[thin] (0,0) [x={(1,-0.1)},y={(0,1)}, partial ellipse=128:194:4 and 1.7];
 \draw[thin, opacity=0.3] (0,0) [x={(1,-0.1)},y={(0,1)}, partial ellipse=213:243:4 and 1.7];
  \draw[thick, blue] (0,0) [x={(1,-0.1)},y={(0,1)}, partial ellipse=0:-115:4.2 and 1.88]
  to [out=150, in=-10] (-2.1,-1.68);
  \draw[thick, blue] (0,0) [x={(1,-0.1)},y={(0,1)}, partial ellipse=0:50:4.2 and 1.88]
  to [out=150, in=200] (2.45,1.75);
    \draw[thick, blue] (0,0) [x={(1,-0.1)},y={(0,1)}, partial ellipse=50:-112:3.8 and 1.52]
      to [out=150, in=200] (-1.55,-1.25);
        \draw[thick, blue, dashed] (0,0) [x={(1,-0.1)},y={(0,1)}, partial ellipse=62:63:3.8 and 1.52]
        to [out=150, in=20] (1.48,1.25);

 \draw[thick, blue, dashed] (-2.2,0.1) [x={(1,-0.1)},y={(0,1)}, partial ellipse=170:350:0.3 and 0.2];
  \draw[thick, blue] (2.15,-0.37) [x={(1,-0.1)},y={(0,1)}, partial ellipse=187:340:0.3 and 0.25];

 \draw[thin] (0,0) [x={(1.5,0.5)},y={(0,1)}, partial ellipse=90:270:2.6 and 2.6];
  \draw[thin, opacity=0.3] (0,0) [x={(1.5,0.5)},y={(0,1)}, partial ellipse=270:288:2.6 and 2.6];
    \draw[thin, opacity=0.3] (0,0) [x={(1.5,0.5)},y={(0,1)}, partial ellipse=308:334:2.6 and 2.6];
      \draw[thin] (0,0) [x={(1.5,0.5)},y={(0,1)}, partial ellipse=-14:60:2.6 and 2.6];
        \draw[thin, opacity=0.3] (0,0) [x={(1.5,0.5)},y={(0,1)}, partial ellipse=72:90:2.6 and 2.6];
  \draw[thick, blue] (0,0) [x={(1.5,0.5)},y={(0,1)}, partial ellipse=180:93:2.45 and 2.4]
  to [out=20, in=140] (0,2.3);
    \draw[thick, blue] (0,0) [x={(1.5,0.5)},y={(0,1)}, partial ellipse=180:267:2.45 and 2.4]
    to [out=20, in=220] (0,-2.3);
    \draw[thick, blue] (0,0) [x={(1.5,0.5)},y={(0,1)}, partial ellipse=180:93:2.75 and 2.8]
  to [out=20, in=220] (0,2.9);
    \draw[thick, blue] (0,0) [x={(1.5,0.5)},y={(0,1)}, partial ellipse=180:267:2.75 and 2.8]
    to [out=20, in=140] (0,-2.9);

 \draw[thick, blue] (-1.9,-1.3) [x={(1.5,0.5)},y={(0,1)}, partial ellipse=0:265:0.24 and 0.24];
  \draw[thick,blue] (1.9,1.3) [x={(1.5,0.5)},y={(0,1)}, partial ellipse=120:180:0.22 and 0.22];

\draw[thick, red, opacity=0.7] (0,-2.6) -- (0,-2);
\draw[thick, red, opacity=0.7, dashed] (0,-1.4) -- (0,-0.13);
\draw[thick, red, opacity=0.7] (0,-0.13) -- (0,2.6);

\draw[thick, red, opacity=0.7] (-1.9,-1.3) -- (0,-0.13);
\draw[thick, red, opacity=0.7, dashed] (0,-0.13) -- (1.8,1);

\draw[thick, red, opacity=0.7, dashed] (-2.2,0.1) -- (0,-0.13);
\draw[thick, red, opacity=0.7] (0,-0.13) -- (2.2,-0.35);

\draw[red, fill=red] (0,-0.13) [x={(1,0)},y={(0,1)}, partial ellipse=0:360:0.07 and 0.07];

\end{tikzpicture}
\caption{The Borromean rings together with a surface system
consisting of three genus one surfaces in the exterior of the link, that have no clasps and exactly one triple point.}
\label{fig:Borromean-standard-surfaces}
\end{figure}

\end{remark}

\subsection{Indeterminacy from choice of surface system}\label{section:indet-surface-systems}

Before answering the question on the dependency of the total Milnor invariant
on the choice of surface system, we consider a construction to modify a given
surface system. This gives further motivation for the quotient~$\Pow$.

\begin{const}[Torus sum]\label{construction:boundary sum}
Let $\Sigma$ be a surface system for a link~$L$. Let $T_K \subset X_L$
be a push-off of the boundary torus~$K \times S^1 \subset \partial X_L$, with the same orientation as $K \times S^1$.
Let $J \neq K$ be another component of $L$. As a first step of the construction,
we make $T_K$ disjoint from $\Sigma_J$. Note that $T_K \cap \Sigma_J$ consists
of push-offs of meridional circles of $K$. These intersections can be resolved by cut-and-pasting
annuli, as illustrated in Figure~\ref{fig:ResolveIntersection}.

\begin{figure}
\begin{tikzpicture}[scale=0.4]


\begin{scope}[shift={(0,0)}]

\draw[thick, blue, dashed] (-4,0)
to [out=down, in=80, looseness=0.8] (-4.3,-3);
\draw[thick, blue, looseness=0.9] (-4.3,-3)
to [out=-100, in=70] (-5,-6);

\draw[thick, blue, dashed] (4,0)
to [out=down, in=80, looseness=0.8] (3.7,-3);
\draw[thick, blue, looseness=0.9] (3.7,-3)
to [out=-100, in=70] (3,-6);

\draw[thick, draw=black, draw opacity=0, fill=red, fill opacity=0.1] (6,4)
to [out=left, in=right, looseness=0.8] (-7,4)
to [out=down, in=up] (-7,-3)
to [out=right, in=left] (-4.3,-3)
to [out=right, in=left, looseness=0.3] (3.7,-3)
to [out=right, in=left, looseness=0.7] (6, -3);

\draw[thick, black] (6,4)
to [out=left, in=right, looseness=0.8] (-7,4);
\draw[thick, black] (-7,-3)
to [out=right, in=left, looseness=0.8] (6,-3);

\draw[thick, blue, fill=white, fill opacity=0.8] (-6,7)
to [out=-60, in=up] (-4,0)
to [out=down, in=down, looseness=0.6] (4,0)
to [out=up, in=-60] (2,7);

\draw[thick, blue, dashed] (4,0)
to [out=up, in=up, looseness=0.6] (-4,0);

\draw[thick, black] (-2.2,7)
to [out=-60, in=up] (-0.2,0)
to [out=down, in =down] (0.2,0)
to [out=up, in=-60] (-1.8,7);

\draw[thick, black] (-0.2, 0)
to [out=up, in = up] (0.2,0);

\draw[thick, black] (-1.2,-6)
to [out=70, in=-100] (-0.4,-3);

\draw[thick, black, dashed] (-0.4,-3)
to [out=80, in =down] (-0.2,0);

\draw[thick, black] (-0.8,-6)
to [out=70, in=-100] (0,-3);

\draw[thick, black, dashed] (0,-3)
to [out=80, in =down] (0.2,0);

\node at (-2,7.7) {$K\times S^1$};
\node at (-6,5.3) {$T_K$};
\node at (5,3) {$\Sigma_J$};

\end{scope}

\begin{scope}[shift={(17,0)}]

\draw[thick, draw=red, draw opacity=0, fill=red, fill opacity=0.2, dashed] (3,-6)
to [out=70, in=-100] (3.7,-3)
to [out=80, in=-30, looseness=0.8] (3,0)
to [out=150, in=30, looseness=1] (-3,0)
to [out=-150, in=80, looseness=0.8] (-4.3,-3)
to [out=-100, in=70] (-5,-6);

\draw[thick, red, dashed] (3.7,-3)
to [out=80, in=-30, looseness=0.8] (3,0)
to [out=150, in=30, looseness=1] (-3,0)
to [out=-150, in=80, looseness=0.8] (-4.3,-3);

\draw[thick, red, looseness=0.9] (-4.3,-3)
to [out=-100, in=70] (-5,-6);

\draw[thick, red, looseness=0.9] (3.7,-3)
to [out=-100, in=70] (3,-6);

\draw[thick, draw=black, draw opacity=0, fill=red, fill opacity=0.1] (6,4)
to [out=left, in=right, looseness=0.8] (-7,4)
to [out=down, in=up] (-7,-3)
to [out=right, in=left] (-4.3,-3)
to [out=right, in=left, looseness=0.3] (3.7,-3)
to [out=right, in=left, looseness=0.7] (6, -3);

\draw[thick, black] (6,4)
to [out=left, in=right, looseness=0.8] (-7,4);
\draw[thick, black] (-7,-3)
to [out=right, in=left, looseness=0.8] (6,-3);

\draw[thick, draw=black, draw opacity=0, fill=red, fill opacity=0.1] (-6,7)
to [out=-60, in=up](-4,0.5)
to [out=down, in=right] (-5,-0.5)
to [out=down, in=down, looseness= 0.5] (5,-0.5)
to [out=left, in=down] (4,0.5)
to [out=up, in=-60] (2,7);

\draw[thick, draw=black, draw opacity=0, fill=red, fill opacity=0.1] (-6,7)
to [out=-60, in=up](-4,0)
to [out=up, in=up, looseness= 0.7](4,0)
to [out=up, in=-60] (2,7);

\draw[thick, draw=red, draw opacity=1, fill=red, fill opacity=0] (-6,7)
to [out=-60, in=up] (-4,0.5)
to [out=down, in=right] (-5,-0.5);
\draw[thick, draw=red, draw opacity=1, fill=red, fill opacity=0] (5,-0.5)
to [out=left, in=down] (4,0.5)
to [out=up, in=-60] (2,7);

\draw[thick, black] (-2.2,7)
to [out=-60, in=up] (-0.2,0)
to [out=down, in =down] (0.2,0)
to [out=up, in=-60] (-1.8,7);

\draw[thick, black] (-0.2, 0)
to [out=up, in = up] (0.2,0);

\draw[thick, black] (-1.2,-6)
to [out=70, in=-100] (-0.4,-3);

\draw[thick, black, dashed] (-0.4,-3)
to [out=80, in =down] (-0.2,0);

\draw[thick, black] (-0.8,-6)
to [out=70, in=-100] (0,-3);

\draw[thick, black, dashed] (0,-3)
to [out=80, in =down] (0.2,0);

\draw[thick, red, dashed] (-3,0)
to [out=-70, in=-110, looseness=0.7] (3,0);

\node at (-2,7.7) {$K\times S^1$};
\node at (6,3) {$\Sigma_J\# T_K$};

\end{scope}

\begin{scope}[shift={(2.5,-8.5)}]

\draw[thick, blue] (0,-1)
to [out=up, in=down] (0,1);

\draw[thick, red] (-1, 0)
to [out=right, in =left] (1,0);

\node at (2.3,0) {$\times \,S^1$};

\draw [->] [thick, decorate, decoration={snake}] (4,0) -- (7,0);

\draw [->] [thick, decorate, decoration={snake}] (4,8.5) -- (7,8.5);

\end{scope}

\begin{scope}[shift={(11.5,-8.5)}]

\draw[thick, red] (0,-1)
to [out=up, in=left] (1,0);

\draw[thick, red] (-1, 0)
to [out=right, in =down] (0,1);

\node at (2.3,0) {$\times \,S^1$};

\end{scope}

\end{tikzpicture}
\caption{Resolving an intersection of $T_K$ and $\Sigma_J$ to create a connected surface.}
\label{fig:ResolveIntersection}
\end{figure}
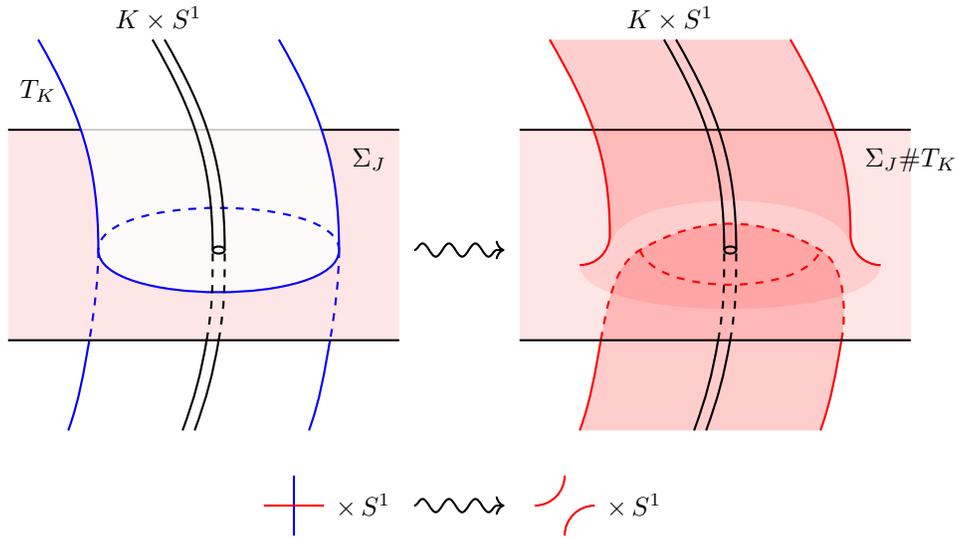

We write~$\Sigma_J' = \Sigma_J \# T_K$ for the result of this operation.
Note that if $K \cap \Sigma_J = \emptyset$, then this is just the disjoint
union~$\Sigma_J \sqcup T_K$.
We call the new surface system where~$\Sigma_J$ is substituted by $\Sigma_J'$
a \emph{torus sum}~$\Sigma \#_J T_K$.
Note the construction also works with $-T_K$, which carries the opposite
of the boundary orientation. Changing the orientation of $T_K$ changes the direction of the smoothings in Figure~\ref{fig:ResolveIntersection}.
\end{const}

Note that a torus sum does not change the clasp-words, because the added surface $T_K$ is disjoint from all boundary components.
The triple intersection numbers change in the following
determined way.

\begin{lemma}\label{lem:BoundarySum}
Let $L$ be a link with surface system~$\Sigma$. Let $\Sigma \#_{s} T_r$ be
a torus sum. Then
\begin{align*}
m(\Sigma \#_s T_r) &= m(\Sigma)\\
t(\Sigma \#_s T_r) - t(\Sigma ) &=  - v_{s,r},
\end{align*}
where the vectors~$v_{s,r}$ are defined in Lemma~\ref{lem:IndetermineVectors}.
\end{lemma}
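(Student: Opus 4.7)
My plan is to establish the two equalities separately: the first will follow from the locality of the torus sum with respect to the boundary of $X_L$, and the second will combine a homological identity with a local intersection computation in an adapted collar.

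For the first equality, I will argue that the torus sum happens entirely in the interior of $X_L$. The torus $T_r$ is a push-off of $\partial \nu K_r$ into the interior, and the cut-and-paste annuli used to produce $\Sigma_s' = \Sigma_s \# T_r$ can be chosen to be disjoint from a fixed adapted collar of $\partial X_L$. In particular $\Sigma_s'$ and $\Sigma_s$ will coincide near the boundary, so the intersection sets $I_{K,j}\subset K$ and hence the clasp-words $\widetilde w_K$ will be unchanged. From the definition of $m_{ijk}$ as a sum of signed occurrences $e_{\bullet\bullet}(\widetilde w_\bullet)$, this will immediately give $m(\Sigma \#_s T_r) = m(\Sigma)$.

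For the second equality, I will use that cut-and-paste is additive in relative homology, so in $H_2(X_L, \partial X_L; \Z)$ the identity $[\Sigma_s'] = [\Sigma_s] + [T_r]$ holds. This will give, for each triple $\{a,b,s\}$ of distinct indices,
\[
  t_{abs}(\Sigma') - t_{abs}(\Sigma) = [\Sigma_a]\cdot[\Sigma_b]\cdot[T_r],
\]
the signed count of triple intersection points of these three surfaces, while triples not containing $s$ are unaffected. It will then remain to evaluate this count for each pair $\{a,b\}$.

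To do so, I will take $T_r = \{-\delta\}\times K_r\times S^1$ inside an adapted collar $(-\varepsilon,0]\times K_r\times S^1$. The adapted-collar conditions force each $\Sigma_j$ with $j\neq r$ to meet the collar in a disjoint union of parallel meridional disks $(-\varepsilon,0]\times\{\theta\}\times S^1$ indexed by $I_{K_r,j}\subset K_r$, while $\Sigma_r$ meets it in the longitudinal annulus $(-\varepsilon,0]\times K_r\times\{1\}$. When both $a,b\neq r$, the sets $I_{K_r,a}$ and $I_{K_r,b}$ are disjoint in $K_r$, so $\Sigma_a\cap\Sigma_b$ avoids the collar entirely and the triple product with $T_r$ vanishes. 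When $b=r$ and $a\neq r,s$, the intersection $\Sigma_a\cap\Sigma_r$ contributes one arc per element of $I_{K_r,a}$, each of which crosses $T_r$ transversely in a single point. A local orientation computation in the frame $(\partial_t,\partial_\theta,\partial_\phi)$ will then show that each such triple intersection contributes $-\varepsilon_\theta$, where $\varepsilon_\theta\in\{\pm 1\}$ is the sign of $\theta\in K_r\cap\Sigma_a$; summing yields $-\lk(K_a,K_r)$ for each triple $\{a,r,s\}$, and collecting these contributions reassembles $-v_{s,r}$. The main obstacle will be the sign book-keeping in this last step, once the conventions of the adapted collar and the boundary-inherited orientation on $T_r$ are fixed; the geometric content of the triple count itself is immediate from the adapted-collar description.
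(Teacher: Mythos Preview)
Your approach is essentially the paper's: the clasp-words are unchanged because the construction is interior, and the new triple points all lie on $T_r$ in the adapted collar, where one counts them against the intersection arcs $\Sigma_a\cap\Sigma_r$ to recover a linking number. The paper's own proof is terser---it simply asserts $\Sigma_r\cdot T_r\cdot\Sigma_i=\lk(K_r,K_i)$ from the fact that each such triple point sits on a unique intersection arc---while you supply more of the collar model and the case split $a,b\neq r$ versus $b=r$.

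One caution: your stated intermediate sign $[\Sigma_a]\cdot[\Sigma_r]\cdot[T_r]=-\lk(K_a,K_r)$ is inconsistent with your final claim. With that sign the contribution to $t(\Sigma')-t(\Sigma)$ is $-\sum_a\lk(K_a,K_r)X^{[ars]}=+\sum_a\lk(K_a,K_r)X^{[asr]}=+v_{s,r}$, not $-v_{s,r}$. The paper's orientation convention gives $\Sigma_r\cdot T_r\cdot\Sigma_a=+\lk(K_r,K_a)$, hence $t_{isr}'-t_{isr}=-\lk(K_i,K_r)$, which does assemble to $-v_{s,r}$. You correctly flag the sign book-keeping as the delicate point; just be aware that your tentative sign is off by one transposition. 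Also, the homological identity $[\Sigma_s']=[\Sigma_s]+[T_r]$ is only a heuristic here---triple intersection numbers of surfaces with boundary are not a priori homology invariants---so the actual justification is the geometric one you give in the collar, namely that the cut-and-paste region can be taken disjoint from $\Sigma_a\cap\Sigma_b$.
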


\begin{proof}
The additional triple intersection points come from intersections with $T_r$.
Consequently, the difference~$t_{rsi} (\Sigma \#_s T_r) - t_{rsi}(\Sigma )$ is given by
$\Sigma_r \cdot T_r \cdot \Sigma_i$. Each such point is contained
in exactly one intersection arc from $\Sigma_r$ to $\Sigma_i$.
From this we obtain
\[ t_{rsi} (\Sigma \#_r T_s) - t_{rsi}(\Sigma) = \lk(K_r, K_i).\]
By the skew-symmetry of triple intersection numbers, we deduce that also
\[ t_{isr} (\Sigma \#_r T_s) - t_{isr}(\Sigma ) = -\lk(K_r, K_i) = - \lk(K_i,K_r). \]
In formal sum form, this is
$ t(\Sigma \#_s T_r) - t(\Sigma ) = - v_{s,r} $.
\end{proof}

\begin{lemma}[Ordered form]\label{lem:MakeBoundaryAlike}
Let $L$ be link and a surface system~$\Sigma$.
By modifying the surface systems
in an arbitrarily small neighbourhood of
each component, and without changing $m(\Sigma)-t(\Sigma)$, we can arrange
each clasp-word to be
\[ \wt w_k = 1^{\lk(K_1, K_k)}  \cdot \ldots \cdot
n^{\lk(K_n, K_k)}.\]
\end{lemma}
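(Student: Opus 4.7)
The plan is to work on each clasp-word $\wt w_k$ independently, using two local moves on the surface system, each supported in an arbitrarily small neighbourhood of $K_k$: a \emph{clasp swap} that exchanges two consecutive letters $i^\varepsilon j^\delta$ of $\wt w_k$ (with $i \neq j$) into $j^\delta i^\varepsilon$, and a \emph{tubing} that deletes a consecutive pair $i^{+1} i^{-1}$ or $i^{-1} i^{+1}$. Once each move is shown to preserve $m(\Sigma) - t(\Sigma)$, the conclusion will follow by running a bubble-sort algorithm on each $\wt w_k$.

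For the clasp swap, the two consecutive intersections of $K_k$ with $\Sigma_i$ and $\Sigma_j$ correspond to arcs of $\Sigma_i \cap \Sigma_k$ and $\Sigma_j \cap \Sigma_k$ emanating from $K_k$ into $\Sigma_k$. I would perform a finger move inside a small ball near the relevant segment of $K_k$ that slides one arc-endpoint past the other along $K_k$; this swaps the two clasps and creates exactly one transverse intersection of the two arcs inside $\Sigma_k$, which is a new triple point of $\Sigma_i \cap \Sigma_j \cap \Sigma_k$. A direct application of the second relation in \eqref{eqn:e-relations} gives $\Delta e_{ij}(\wt w_k) = -\varepsilon\delta$ and $\Delta e_{ji}(\wt w_k) = +\varepsilon\delta$, with no change to any other $e_{rs}(\wt w_p)$. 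Thus only the entry of $m$ indexed by $\{i,j,k\}$ is affected in the sum $m = \sum_{a<b<c} m_{abc} X^{[abc]}$, changing by $\pm \varepsilon\delta$. The principal obstacle I expect is an orientation check showing that the sign of the newly created triple point matches this change, so that $\Delta t_{ijk} = \Delta m_{ijk}$; this reduces to comparing the ordered frame of surface normals to $(\Sigma_i, \Sigma_j, \Sigma_k)$ at the new triple point with the ambient orientation of $S^3$ in the standard local model of the finger move.

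For the tubing, the hypothesis that $i^{+1}$ and $i^{-1}$ are consecutive guarantees that the arc $\alpha \subset K_k$ between the two oppositely-signed intersection points of $K_k$ with $\Sigma_i$ meets no other $\Sigma_\ell$. In a small neighbourhood of $\alpha$ disjoint from the rest of the surface system, I would surger $\Sigma_i$ along an annulus parallel to $\alpha$. This removes both intersection points, creates no new pairwise or triple intersections, and preserves every other clasp-word. Deleting the pair $i^{+1} i^{-1}$ from $\wt w_k$ leaves $e_r(\wt w_k)$ invariant for every $r$, and a short application of \eqref{eqn:e-relations} shows $e_{rs}(\wt w_k)$ is unchanged whenever $r \neq s$, so both $m$ and $t$ are unchanged.

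To finish, I would iterate the clasp swap on each $\wt w_k$ in bubble-sort fashion, pushing all letters of underlying index $1$ to the front, then those of index $2$, and so on, skipping index $k$ (which does not appear in $\wt w_k$). Whenever an adjacent pair $i^{+1} i^{-1}$ or $i^{-1} i^{+1}$ appears within an index-block, I would cancel it by tubing. Since the signed count $e_i(\wt w_k) = K_k \cdot \Sigma_i = \lk(K_i, K_k)$ is preserved by both moves, after sorting and cancellation each $i$-block reduces to $|\lk(K_i, K_k)|$ letters all of sign $\sign(\lk(K_i, K_k))$, producing the required ordered form.
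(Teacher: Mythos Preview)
Your proposal is correct and follows essentially the same approach as the paper: the paper's proof uses exactly your two local moves---the finger move of \cite[Figure~6]{Mellor03} (your ``clasp swap'') to transpose adjacent letters at the cost of one compensating triple point, and a tubing of $\Sigma_i$ around $K_k$ to cancel an adjacent $i^{+1}i^{-1}$ pair---and then iterates to reach the ordered form. One small imprecision: the neighbourhood in which you tube cannot be disjoint from $\Sigma_k$, since $\alpha\subset K_k=\partial\Sigma_k$; the tube does meet $\Sigma_k$ and converts the two clasps into a ribbon intersection, but as the paper observes this creates no triple points and leaves all other clasp-words unchanged, so your conclusion stands.
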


\begin{proof}
Note that near $K_k$, we can pick a tubular neighbourhood~$K_k \times D^2$ of $K_k$
and assume that the other surfaces intersect the neighbourhood
in discs~$\{x\} \times D^2$ with $x \in I_{K_k}$.
In such a neighbourhood we can use the finger move~\cite[Figure~6]{Mellor03}
also depicted in our Figure~\ref{fig:finger-move}, to change the order of two adjacent intersection points.
This creates another triple intersection point that exactly
equals the change in the~$m_{ijk}$, and so $m - t$ is unaltered; cf.\ \cite[Lemma 2]{Mellor03}.

\begin{figure}
\begin{tikzpicture}


\begin{scope}[scale=0.4, shift={(-3, 13)}]

\draw[thick, draw=black, draw opacity=0, fill=red, fill opacity=0.1] (-5, 0)
to [out=right, in=left] (-2.75,0)
to [out=-135, in=45] (-4, -1.25)
to [out=left, in=right] (-6.25,-1.25)
to [out=45, in=-135] (-5,0);

\begin{knot}[
clip width=5,
clip radius=2pt,
]
\strand [thick, black] (-5, 0)
to [out=right, in=left] (-2.75,0);

\strand [thick, blue] (-1.5, 4)
to [out=-135, in=45] (-4,1.5)
to [out=down, in=up] (-4,-4)
to [out=45, in=-135] (-1.5, -1.5);

\flipcrossings{1}
\end{knot}

\draw [thick, draw=blue, draw opacity=1, fill=white, fill opacity=0.8] (-1.5, 4)
to [out=-135, in=45] (-4,1.5)
to [out=down, in=up] (-4,-4)
to [out=45, in=-135] (-1.5, -1.5)
to [out=up, in=down] (-1.5, 4);

\draw [thick, red] (-2.75, 0)
to [out=-135, in=45] (-4, -1.25);

\begin{knot}[
clip width=5,
clip radius=5pt,
]

\strand[thick, blue] (-1.5, -1.5)
to [out=up, in=down] (-1.5, 4);

\strand [thick, black] (-2.75, 0)
to [out=right, in=left] (2.25, 0);

\strand [thick, cyan] (3.5, 4)
to [out=-135, in=45] (1,1.5)
to [out=down, in=up] (1,-4)
to [out=45, in=-135] (3.5, -1.5)
to [out=up, in=down] (3.5, 4);

\flipcrossings{1,2}
\end{knot}

\draw[thick, draw=black, draw opacity=0, fill=red, fill opacity=0.1] (-2.75, 0)
to [out=right, in=left] (2.25,0)
to [out=-135, in=45] (1, -1.25)
to [out=left, in=right] (-4,-1.25)
to [out=45, in=-135] (-2.75,0);

\draw [thick, cyan, draw opacity=1, fill=white, fill opacity=0.8] (3.5, 4)
to [out=-135, in=45] (1,1.5)
to [out=down, in=up] (1,-4)
to [out=45, in=-135] (3.5, -1.5);

\draw [thick, red] (2.25, 0)
to [out=-135, in=45] (1, -1.25);

\begin{knot}[
clip width=5,
clip radius=5pt,
]
\strand[thick, cyan] (3.5, -1.5)
to [out=up, in=down] (3.5, 4);

\strand [thick, black] (2.25, 0)
to [out=right, in=left] (5, 0);

\flipcrossings{1}
\end{knot}

\draw[thick, draw=black, draw opacity=0, fill=red, fill opacity=0.1] (2.25, 0)
to [out=right, in=left] (5,0)
to [out=-135, in=45] (3.75, -1.25)
to [out=left, in=right] (1,-1.25)
to [out=45, in=-135] (2.25,0);

\draw [thick, blue] (-4, 1)
to [out=down, in=up] (-4, -4);

\draw [thick, cyan] (1, 1)
to [out=down, in=up] (1, -4);

\draw [black, fill=black] (2.25,0) circle (2pt);
\draw [black, fill=black] (-2.75,0) circle (2pt);

\end{scope}


\begin{knot}[
clip width=5,
clip radius=3pt,
]

\strand [thick, black] (-5, 0)
to [out=right, in=left] (-2, 0);

\strand [thick, blue] (-2.75, -0.75)
to [out=-135, in=down] (-3, -0.25)
to [out=up, in=-135] (-2.75, 0.75);

\strand [thick, blue] (-4, 1.5)
to [out=down, in=up] (-4, -4);

\flipcrossings{1,2}
\end{knot}

\draw[thick, draw=black, draw opacity=0, fill=red, fill opacity=0.1] (-5, 0)
to [out=right, in=left] (-2.75,0)
to [out=-135, in=45] (-4, -1.25)
to [out=left, in=right] (-6.25,-1.25)
to [out=45, in=-135] (-5,0);

\draw [thick, blue, fill=white, fill opacity=0.8] (-1.5, 4)
to [out=-135, in=45] (-4,1.5)
to [out=down, in=up] (-4,-4)
to [out=45, in=-135] (-1.5, -1.5);

\draw[thick, blue, dashed] (-2.75, 0.75)
to [out=45, in=up] (-2.5,0.25)
to [out=down, in=45] (-2.75,-0.75);

\begin{knot}[
clip width=5,
clip radius=3pt,
]

\strand [thick, black] (-2, 0)
to [out=right, in=left] (0, 0);

\strand [thick, blue] (-1.5, -1.5)
to [out=up, in= down] (-1.5, 4);

\strand [thick, blue, fill=white, fill opacity=0] (-1.5, 4)
to [out=-135, in=45] (-4,1.5)
to [out=down, in=up] (-4,-4)
to [out=45, in=-135] (-1.5, -1.5);

\strand [thick, blue] (-2.715, 0.785)
to [out=right, in=left] (0, 0.785);
\strand [thick, blue] (0,-0.785)
to [out=left, in=right] (-2.8,-0.785);

\strand [thick, red] (-3, -0.25)
to [out=right, in=left] (0.5, -0.25);

\flipcrossings{2,3, 4}
\end{knot}

\draw [thick, blue] (-2.75, -0.75)
to [out=-135, in=down] (-3, -0.25)
to [out=up, in=-135] (-2.75, 0.75);

\draw [thick, red] (-4, -1.25)
to [out=45, in=-135] (-3, -0.25);

\draw[thick, black] (-2.75,0)
to [out=right, in=left] (0,0);

\draw [thick, red] (-3, -0.25)
to [out=right, in=left] (0.5, -0.25);

\draw [thick, blue] (-4, -4)
to [out=up, in= down] (-4, 1.5);


\begin{scope}[shift={(5,0)}]
\begin{knot}[
clip width=5,
clip radius=3pt,
]

\strand [thick, black] (-5, 0)
to [out=right, in=left] (-2, 0);

\strand [thick, cyan] (-2.75, -0.75)
to [out=-135, in=down] (-3, -0.25)
to [out=up, in=-135] (-2.75, 0.75);

\strand [thick, cyan] (-4, 1.5)
to [out=down, in=up] (-4, -4);

\strand [thick, red] (-4.5, -0.25)
to [out=right, in=left] (-3, -0.25);

\strand [thick, blue] (-5, 0.785)
to [out=right, in=left] (-2.715, 0.785);
\strand [thick, blue] (-5,-0.785)
to [out=left, in=right] (-2.8,-0.785);

\flipcrossings{1,2}
\end{knot}

\draw[thick, draw=black, draw opacity=0, fill=red, fill opacity=0.1] (-7.75, 0)
to [out=right, in=left] (-2.75,0)
to [out=-135, in=45] (-4, -1.25)
to [out=left, in=right] (-9,-1.25)
to [out=45, in=-135] (-7.75,0);

\draw [thick, cyan, fill=white, fill opacity=0.8] (-1.5, 4)
to [out=-135, in=45] (-4,1.5)
to [out=down, in=up] (-4,-4)
to [out=45, in=-135] (-1.5, -1.5);

\draw[thick, cyan, dashed] (-2.75, 0.75)
to [out=45, in=up] (-2.5,0.25)
to [out=down, in=45] (-2.75,-0.75);

\begin{knot}[
clip width=5,
clip radius=3pt,
]

\strand [thick, black] (-2, 0)
to [out=right, in=left] (0, 0);

\strand [thick, cyan] (-1.5, -1.5)
to [out=up, in= down] (-1.5, 4);

\strand [thick, cyan, fill=white, fill opacity=0] (-1.5, 4)
to [out=-135, in=45] (-4,1.5)
to [out=down, in=up] (-4,-4)
to [out=45, in=-135] (-1.5, -1.5);

\strand [thick, blue] (-2.715, 0.785)
to [out=right, in=left] (-1, 0.785);
\strand [thick, blue] (-1,-0.785)
to [out=left, in=right] (-2.8,-0.785);

\strand [thick, red] (-3, -0.25)
to [out=right, in=left] (-1, -0.25);

\flipcrossings{2,3, 4}
\end{knot}

\draw[thick, draw=black, draw opacity=0, fill=red, fill opacity=0.1] (-2.75, 0)
to [out=right, in=left] (0.5,0)
to [out=-135, in=45] (-0.75, -1.25)
to [out=left, in=right] (-4,-1.25)
to [out=45, in=-135] (-2.75,0);

\draw [thick, cyan] (-2.75, -0.75)
to [out=-135, in=down] (-3, -0.25)
to [out=up, in=-135] (-2.75, 0.75);

\draw [thick, red] (-4, -1.25)
to [out=45, in=-135] (-2.75, 0);

\draw[thick, black] (-2.75,0)
to [out=right, in=left] (0.5,0);

\draw [thick, red] (-3, -0.25)
to [out=right, in=left] (-1, -0.25);

\draw [thick, cyan] (-4, -4)
to [out=up, in= down] (-4, 1.5);

\draw[thick, red] (-1,-0.25)
to [out=right, in=-150, looseness=0.7] (-0.5, 0);

\end{scope}

\draw [thick, blue] (4,0.785)
to [out=right, in=right, looseness=1.1] (4, -0.785);

\draw [black, fill=black] (4.5,0) circle (1.3pt);
\draw [black, fill=black] (2.25,0) circle (1.3pt);

\node at (-2.5,1.5) {$\Sigma_i$};
\node at (2.5,1.5) {$\Sigma_j$};
\node at (-5, -0.8) {$\Sigma_k$};
\node at (6, 0) {$K_k$};

\draw [->] [thick, black, ] (-4, 4)
to [out=-150, in=135] (-4.5, 2);

\end{tikzpicture}

\caption{The finger move switches the position of two intersection points in the clasp-word
and introduces a new triple point.}
\label{fig:finger-move}
\end{figure}
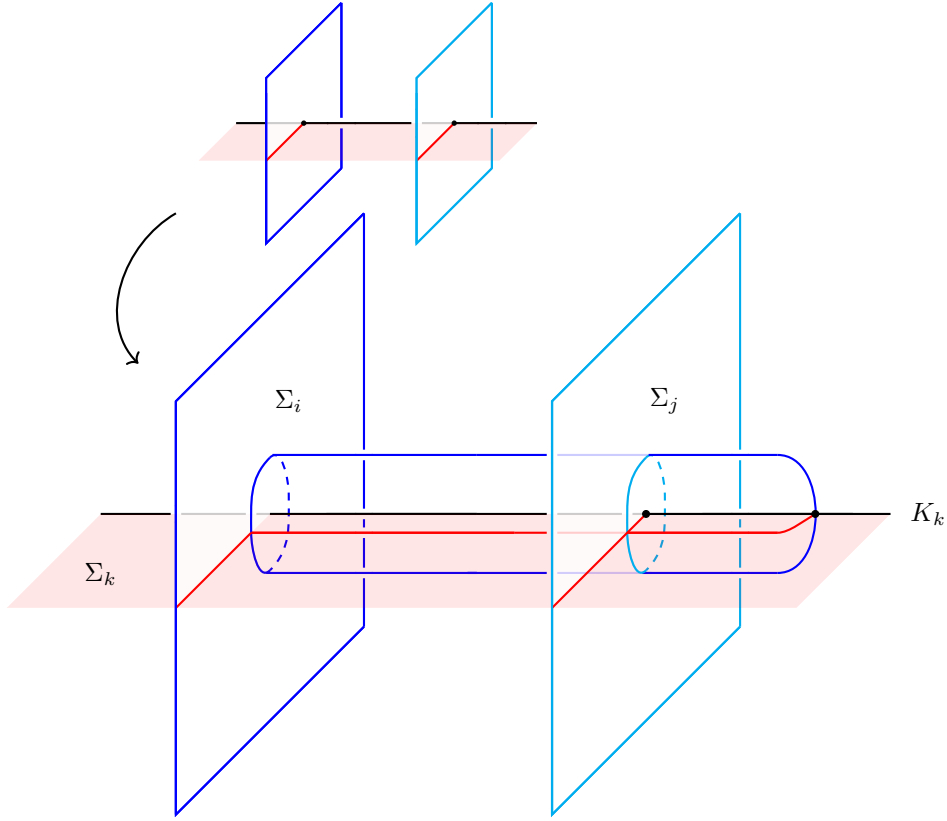

It remains to show that adjacent letters of opposite sign can be cancelled.
Suppose $x,y \in I_{K_k}$ are the corresponding intersection points of $\Sigma_j$ with $K_k$.
We can remove the intersection points by
tubing: replace the two discs~$\{x\} \times D^2$ and
$\{y\} \times D^2$ in $\Sigma_j$ with a tube around $K_k$, as shown in Figure~\ref{fig:tubing}.

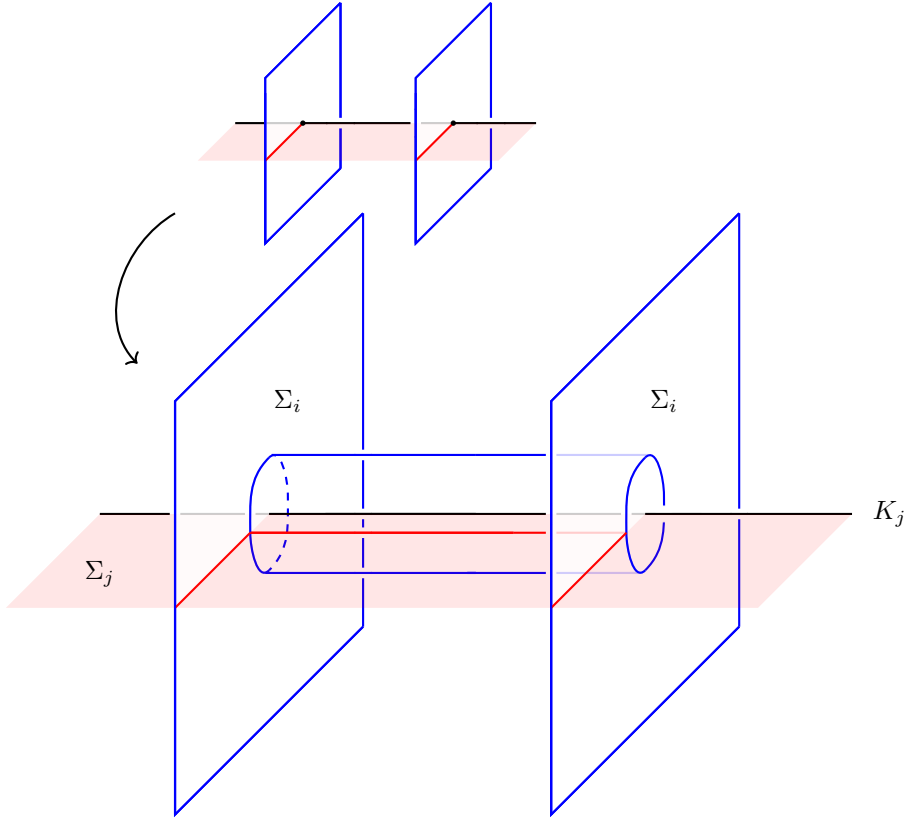
\begin{figure}
\begin{tikzpicture}

\begin{scope}[scale=0.4, shift={(-3, 13)}]

\draw[thick, draw=black, draw opacity=0, fill=red, fill opacity=0.1] (-5, 0)
to [out=right, in=left] (-2.75,0)
to [out=-135, in=45] (-4, -1.25)
to [out=left, in=right] (-6.25,-1.25)
to [out=45, in=-135] (-5,0);

\begin{knot}[
clip width=5,
clip radius=2pt,
]
\strand [thick, black] (-5, 0)
to [out=right, in=left] (-2.75,0);

\strand [thick, blue] (-1.5, 4)
to [out=-135, in=45] (-4,1.5)
to [out=down, in=up] (-4,-4)
to [out=45, in=-135] (-1.5, -1.5);

\flipcrossings{1}
\end{knot}

\draw [thick, draw=blue, draw opacity=1, fill=white, fill opacity=0.8] (-1.5, 4)
to [out=-135, in=45] (-4,1.5)
to [out=down, in=up] (-4,-4)
to [out=45, in=-135] (-1.5, -1.5)
to [out=up, in=down] (-1.5, 4);

\draw [thick, red] (-2.75, 0)
to [out=-135, in=45] (-4, -1.25);

\begin{knot}[
clip width=5,
clip radius=5pt,
]

\strand[thick, blue] (-1.5, -1.5)
to [out=up, in=down] (-1.5, 4);

\strand [thick, black] (-2.75, 0)
to [out=right, in=left] (2.25, 0);

\strand [thick, blue] (3.5, 4)
to [out=-135, in=45] (1,1.5)
to [out=down, in=up] (1,-4)
to [out=45, in=-135] (3.5, -1.5)
to [out=up, in=down] (3.5, 4);

\flipcrossings{1,2}
\end{knot}

\draw[thick, draw=black, draw opacity=0, fill=red, fill opacity=0.1] (-2.75, 0)
to [out=right, in=left] (2.25,0)
to [out=-135, in=45] (1, -1.25)
to [out=left, in=right] (-4,-1.25)
to [out=45, in=-135] (-2.75,0);

\draw [thick, blue, draw opacity=1, fill=white, fill opacity=0.8] (3.5, 4)
to [out=-135, in=45] (1,1.5)
to [out=down, in=up] (1,-4)
to [out=45, in=-135] (3.5, -1.5);

\draw [thick, red] (2.25, 0)
to [out=-135, in=45] (1, -1.25);

\begin{knot}[
clip width=5,
clip radius=5pt,
]
\strand[thick, blue] (3.5, -1.5)
to [out=up, in=down] (3.5, 4);

\strand [thick, black] (2.25, 0)
to [out=right, in=left] (5, 0);

\flipcrossings{1}
\end{knot}

\draw[thick, draw=black, draw opacity=0, fill=red, fill opacity=0.1] (2.25, 0)
to [out=right, in=left] (5,0)
to [out=-135, in=45] (3.75, -1.25)
to [out=left, in=right] (1,-1.25)
to [out=45, in=-135] (2.25,0);

\draw [thick, blue] (-4, 1)
to [out=down, in=up] (-4, -4);

\draw [thick, blue] (1, 1)
to [out=down, in=up] (1, -4);

\draw [black, fill=black] (2.25,0) circle (2pt);
\draw [black, fill=black] (-2.75,0) circle (2pt);

\end{scope}

\begin{knot}[
clip width=5,
clip radius=3pt,
]

\strand [thick, black] (-5, 0)
to [out=right, in=left] (-2, 0);

\strand [thick, blue] (-2.75, -0.75)
to [out=-135, in=down] (-3, -0.25)
to [out=up, in=-135] (-2.75, 0.75);

\strand [thick, blue] (-4, 1.5)
to [out=down, in=up] (-4, -4);

\flipcrossings{1,2}
\end{knot}

\draw[thick, draw=black, draw opacity=0, fill=red, fill opacity=0.1] (-5, 0)
to [out=right, in=left] (-2.75,0)
to [out=-135, in=45] (-4, -1.25)
to [out=left, in=right] (-6.25,-1.25)
to [out=45, in=-135] (-5,0);

\draw [thick, blue, fill=white, fill opacity=0.8] (-1.5, 4)
to [out=-135, in=45] (-4,1.5)
to [out=down, in=up] (-4,-4)
to [out=45, in=-135] (-1.5, -1.5);

\draw[thick, blue, dashed] (-2.75, 0.75)
to [out=45, in=up] (-2.5,0.25)
to [out=down, in=45] (-2.75,-0.75);

\begin{knot}[
clip width=5,
clip radius=3pt,
]

\strand [thick, black] (-2, 0)
to [out=right, in=left] (0, 0);

\strand [thick, blue] (-1.5, -1.5)
to [out=up, in= down] (-1.5, 4);

\strand [thick, blue, fill=white, fill opacity=0] (-1.5, 4)
to [out=-135, in=45] (-4,1.5)
to [out=down, in=up] (-4,-4)
to [out=45, in=-135] (-1.5, -1.5);

\strand [thick, blue] (-2.715, 0.785)
to [out=right, in=left] (0, 0.785);
\strand [thick, blue] (0,-0.785)
to [out=left, in=right] (-2.8,-0.785);

\strand [thick, red] (-3, -0.25)
to [out=right, in=left] (0.5, -0.25);

\flipcrossings{2,3, 4}
\end{knot}

\draw [thick, blue] (-2.75, -0.75)
to [out=-135, in=down] (-3, -0.25)
to [out=up, in=-135] (-2.75, 0.75);

\draw [thick, red] (-4, -1.25)
to [out=45, in=-135] (-3, -0.25);

\draw[thick, black] (-2.75,0)
to [out=right, in=left] (0,0);

\draw [thick, red] (-3, -0.25)
to [out=right, in=left] (0.5, -0.25);

\draw [thick, blue] (-4, -4)
to [out=up, in= down] (-4, 1.5);


\begin{scope}[shift={(5,0)}]
\begin{knot}[
clip width=5,
clip radius=3pt,
]

\strand [thick, black] (-5, 0)
to [out=right, in=left] (-2, 0);

\strand [thick, blue] (-2.75, -0.75)
to [out=-135, in=down] (-3, -0.25)
to [out=up, in=-135] (-2.75, 0.75);

\strand [thick, blue] (-4, 1.5)
to [out=down, in=up] (-4, -4);

\strand [thick, red] (-4.5, -0.25)
to [out=right, in=left] (-3, -0.25);

\strand [thick, blue] (-5, 0.785)
to [out=right, in=left] (-2.715, 0.785);

\strand [thick, blue] (-5,-0.785)
to [out=left, in=right] (-2.95,-0.785);

\flipcrossings{1,2}
\end{knot}

\draw[thick, draw=black, draw opacity=0, fill=red, fill opacity=0.1] (-7.75, 0)
to [out=right, in=left] (-2.75,0)
to [out=-135, in=45] (-4, -1.25)
to [out=left, in=right] (-9,-1.25)
to [out=45, in=-135] (-7.75,0);

\draw [thick, blue, fill=white, fill opacity=0.8] (-1.5, 4)
to [out=-135, in=45] (-4,1.5)
to [out=down, in=up] (-4,-4)
to [out=45, in=-135] (-1.5, -1.5);

\draw[thick, blue] (-2.75, 0.75)
to [out=45, in=up] (-2.5,0.25)
to [out=down, in=45] (-2.75,-0.75);

\draw [white, fill=white] (-2.5,0) circle (3pt);

\begin{knot}[
clip width=5,
clip radius=3pt,
]

\strand [thick, black] (-2.75, 0)
to [out=right, in=left] (0, 0);

\strand [thick, blue] (-1.5, -1.5)
to [out=up, in= down] (-1.5, 4);

\strand [thick, blue, fill=white, fill opacity=0] (-1.5, 4)
to [out=-135, in=45] (-4,1.5)
to [out=down, in=up] (-4,-4)
to [out=45, in=-135] (-1.5, -1.5);


\flipcrossings{2,3, 4}
\end{knot}

\draw[thick, draw=black, draw opacity=0, fill=red, fill opacity=0.1] (-2.75, 0)
to [out=right, in=left] (0,0)
to [out=-135, in=45] (-1.25, -1.25)
to [out=left, in=right] (-4,-1.25)
to [out=45, in=-135] (-2.75,0);

\draw [thick, blue] (-2.75, -0.75)
to [out=-135, in=down] (-3, -0.25)
to [out=up, in=-135] (-2.75, 0.75);

\draw [thick, red] (-4, -1.25)
to [out=45, in=-135] (-3, -0.25);

\draw [thick, blue] (-4, -4)
to [out=up, in= down] (-4, 1.5);

\end{scope}

\node at (-2.5,1.5) {$\Sigma_i$};
\node at (2.5,1.5) {$\Sigma_i$};
\node at (-5, -0.8) {$\Sigma_j$};
\node at (5.5, 0) {$K_j$};

\draw [->] [thick, black, ] (-4, 4)
to [out=-150, in=135] (-4.5, 2);

\end{tikzpicture}\caption{Tubing together adjacent intersections of $K_j$ with the same surface~$\Sigma_i$, but
with opposite signs. The corresponding letters in the clasp-words are cancelled.}
\label{fig:tubing}
\end{figure}

 The operation of tubing does not change $m_{ijk}$, and we
see in the local model that no additional triple intersection points
are created. Note that tubing converts two clasps into a ribbon intersection, but the contribution of the endpoints of this ribbon to the clasp-word of $L_j$ is the same as the contributions of the endpoints of the original clasps.  If one of the intersection points was already the end point of a ribbon intersection, the outcome is again a single ribbon.  If the two intersection points are the end points of a single ribbon intersection, then the outcome of tubing is a circle intersection.  In either case, the contributions to other clasp-words is unaltered.
\end{proof}

The following theorem subsumes the corresponding theorem
for $\mu_{ijk}$~\cite[p.~561]{Mellor03}. We give a new proof using
bordisms instead of disentangled surfaces.

\begin{theorem}\label{thm:MilnorWellDef}
Let $L$ be a link with two surface systems~$\Sigma$ and~$\Sigma'$.
Then the total Milnor invariants
\[ m(\Sigma) - t(\Sigma) = m(\Sigma') - t(\Sigma') \]
coincide as elements of~$\mathcal{M}$.
\end{theorem}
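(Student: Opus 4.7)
The plan is to reduce the statement to an application of Theorem~\ref{thm:FillableIntersection} by forming the double exterior $M = -X_L \cup X_L$ and exploiting that $M$ bounds the product $X_L \times [0,1]$ over $B\Z^n$.

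First I apply Lemma~\ref{lem:MakeBoundaryAlike} to $\Sigma$ and $\Sigma'$ independently, modifying each surface system in an arbitrarily small neighborhood of $L$ so as to put both clasp-words into the standard ordered form $\wt w_k = 1^{\lk(K_1,K_k)} \cdots n^{\lk(K_n,K_k)}$. That procedure preserves $m-t$ for each surface system individually, so it suffices to prove the theorem after normalization. Since the clasp-words now agree exactly, we automatically have $m(\Sigma) = m(\Sigma')$, and the statement reduces to showing that $t(\Sigma) - t(\Sigma') \in \vspan\{v_{s,r}\}$.

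Next I use Construction~\ref{const:DoubleExt} with the identity alignment (available since clasp-words agree) to form the double exterior $M$ with double surface system $F = -\Sigma \cup \Sigma'$. This produces an element $p_F \in \Xi$ with $t_{ijk}(F) = t_{ijk}(\Sigma') - t_{ijk}(\Sigma)$. The key observation is that $M$ is diffeomorphic to $\partial (X_L \times [0,1])$, and the map $f_L \circ \operatorname{pr} \colon X_L \times [0,1] \to B\Z^n$ restricts to a distinguished element $p^* \in \Xi$; the product furnishes a null-bordism over $B\Z^n$, so $(M, p^*) = 0 \in \Omega_3(B\Z^n)$.

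To conclude, I modify $\Sigma'$ by torus sums (Construction~\ref{construction:boundary sum}). By Lemma~\ref{lem:BoundarySum}, each torus sum preserves $m(\Sigma')$ and changes $t(\Sigma')$ by $-v_{s,r}$. The central sub-claim is that some sequence of torus sums on $\Sigma'$ produces a $\widetilde\Sigma'$ for which $p_{\widetilde F} = p^*$ in $\Xi$, where $\widetilde F = -\Sigma \cup \widetilde\Sigma'$. Granting this, Theorem~\ref{thm:FillableIntersection} yields $t_{ijk}(\widetilde F) = 0$, hence $t(\widetilde\Sigma') = t(\Sigma)$. Since torus sums only alter $t(\Sigma')$ by elements of $\vspan\{v_{s,r}\}$, this forces $t(\Sigma) - t(\Sigma') \in \vspan\{v_{s,r}\}$ as required, completing the proof modulo the sub-claim.

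The main obstacle is the sub-claim in the last paragraph, namely that torus sums realize the full affine action of $\wt H^0(L \times S^1; \Z^n)$ on $\Xi$. This reduces to a local cohomological computation near each boundary torus $K_r \times S^1$: I must compute the change in the Pontryagin--Thom collapse map $p_F$ induced by a single torus sum $\Sigma' \#_s T_{K_r}$, with intersections against $\Sigma_s$ resolved as in Figure~\ref{fig:ResolveIntersection}, and then verify that the resulting $n(n-1)$ classes span the rank-$n(n-1)$ module $\wt H^0(L \times S^1; \Z^n)$ described in Remark~\ref{rem:GeometricDescription}.
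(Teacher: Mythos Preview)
Your proposal is correct and follows the same architecture as the paper's proof: normalize clasp-words via Lemma~\ref{lem:MakeBoundaryAlike}, form the double exterior with a vanishing element of~$\Xi$ coming from the product bordism, use torus sums to interpolate, and apply Theorem~\ref{thm:FillableIntersection}.

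The one place where you leave a genuine loose end is your ``main obstacle,'' and the paper's resolution is simpler than the local computation you sketch. The point is that the homotopy class of~$p_F$ is determined entirely by the homology classes~$[F_i] \in H_2(M;\Z)$: under the identifications
\[
[M,B\Z^n] \cong H^1(M;\Z^n) \cong \Z^n \otimes_{\Z} H_2(M;\Z),
\]
the map~$p_F$ corresponds to $\sum_i e_i \otimes [F_i]$. A single torus sum $\#_s T_{K_r}$ changes $[F_s]$ by $\pm[T_{K_r}]$ and leaves the other $[F_i]$ fixed, so it shifts~$p_F$ by $\pm e_s \otimes [T_{K_r}]$. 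This is exactly the action described in Remark~\ref{rem:GeometricDescription}. The only wrinkle is that when the required shift involves a term $e_r \otimes [T_{K_r}]$ (which cannot be realized by a torus sum since the construction requires $s \neq r$), one uses the relation $\sum_\ell [T_{K_\ell}] = 0$ in $H_2(X_L;\Z)$ to rewrite it as $-\sum_{\ell \neq r} e_r \otimes [T_{K_\ell}]$. No local Pontryagin--Thom analysis is needed.
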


\begin{proof}
By Lemma~\ref{lem:MakeBoundaryAlike} above, we may
assume that the clasp-words are in ordered form.
Therefore the clasp-words agree and the base points
give a preferred alignment between the clasp-words.
Consequently, $m(\Sigma) = m(\Sigma')$.

We consider the double surface system~$F = - \Sigma \cup \Sigma'$
in the double exterior~$M = -X_L \cup X_L$.
Although $(M,  p_F) \in \Omega_3(\Z^n)$ might not be zero on the nose,
there is a $g\in \Xi \subseteq [M,B\Z^n]$ such that $(M, g)$ vanishes, because we take two copies of the exterior of
the same link~$L$.
By Lemma~\ref{lem:AffineSpace}, there exists an element $\eta \in \wt{H}^0(L \times S^1; \Z^n)$
such that $g = \eta \cdot  p_F$.

By Remark~\ref{rem:GeometricDescription}, we can translate this
into a torus sum.  That is, identifying
\[[M,B\Z^n] \cong H^1(M;\Z^n) \cong \Z^n \otimes_{\Z} H^1(M;\Z) \cong \Z^n \otimes_{\Z} H_2(M;\Z),\]
we can consider $p_F$ as an element of this latter group.

Then we can take a sequence $\{T_{k}\}_{k=1}^m$,
for $1 \leq k \leq m$, where $T_{k}$ is the boundary of a closed regular neighbourhood of some component~$K_{j_k}$, with either orientation permitted, such that
\[ g = \sum_k e_{i_k} \otimes [T_{k}]  + p_F, \]
where $e_{i_k}$ denotes the $i_k$-th standard basis element of $\Z^n$. Since the sum of all boundary
tori of $X_L$ is zero in $H_2(X_L;\Z)$, we may choose $i_k \neq j_k$ for all $k$. That is, replace $e_{i_k} \otimes [T_{k}]$ with
\[e_{i_k} \otimes [T_{k}] - \sum_{\ell=1}^n e_{i_k} \otimes [T_\ell] = \sum_{\ell\neq k} e_{i_k} \otimes [-T_\ell]\] if necessary.
We consider the associated torus
sum~$\Sigma_\mathcal{T} = \Sigma \#_{i_1} T_{1} \# \cdots \#_{i_m} T_{m}$.

By construction~$f_{\Sigma_\mathcal{T}} = g$, so $(M,g) =0 \in \Omega_3(B\Z^n)$.  By Theorem~\ref{thm:FillableIntersection}, we have
$t( \Sigma_\mathcal{T} ) - t( \Sigma' ) =0$.
  By Lemma~\ref{lem:BoundarySum}, a torus sum does
not change the total Milnor invariant.
We therefore deduce that
\[ 0 = t( \Sigma_\mathcal{T} ) - t( \Sigma' ) = t(\Sigma) - t(\Sigma') \in \mathcal{M}. \]
\end{proof}

We can now prove our second implication of Theorem~\ref{thm:Main-intro}.

\begin{proof}[Proof of (\ref{item:main-thm-1})$\implies$(\ref{item:main-thm-3})]
Suppose that $L$ and $L'$ admit homeomorphic surface systems~$\Sigma$ and~$\Sigma'$.  The linking number $\lk(K_i, K_j)$ can be computed by counting intersections between $K_i$ and $\Sigma_j$, which are preserved by the homeomorphism $\Sigma \cong \Sigma'$.  Thus, $\lk(K_i,K_j) = \lk(K_i',K_j')$.
Furthermore, given a choice of base point for $K_i$, a homeomorphism between $\Sigma$ and $\Sigma'$ produces a choice of base point for $K_i'$ with the property that the clasp-words $\wt \omega_i$ and $\wt \omega'_i$ are identical.  Since $m(\Sigma)$ depends only on the words $\wt \omega_i$,
we see that  $m(\Sigma) = m(\Sigma')$.  Moreover a homeomorphism of surface systems preserves the triple points and their signs, so $t(\Sigma)=t(\Sigma')$.
Thus, $m(\Sigma)-t(\Sigma)$
agrees with $m(\Sigma')-t(\Sigma')$. In light of Theorem~\ref{thm:MilnorWellDef}, the total Milnor invariants do not depend on the choices of surface systems, so the total Milnor invariants $\mu(L)$ and $\mu(L')$ may be computed using $\Sigma$ and $\Sigma'$ respectively, and therefore coincide.
\end{proof}

The next theorem completes the proof of the final implication of Theorem~\ref{thm:Main-intro}, namely (\ref{item:main-thm-3})$\implies$(\ref{item:main-thm-2}), thereby completing the proof of the main theorem.

\begin{theorem}\label{thm:3->2}
Let $L$ and $L'$ be two links with the same linking numbers
and agreeing total Milnor invariants. Then
there exists an element~$f \in \Xi \subseteq [M,B\Z^n]$ such that
the double exterior~$(M, f) \in \Omega_3(B\Z^n)$ bounds.
\end{theorem}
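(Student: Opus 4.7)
The plan is to choose surface systems $\Sigma$ for $L$ and $\Sigma'$ for $L'$ whose clasp-words are identical and whose triple intersection vectors satisfy $t(\Sigma)=t(\Sigma')$ in $\Z^{\binom{n}{3}}$; the map $f:=p_F\in\Xi$ arising from the double surface system $F=-\Sigma\cup\Sigma'$ will then be shown to bound by directly applying Theorem~\ref{thm:FillableIntersection}.

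First I would pick any surface systems $\Sigma$ for $L$ and $\Sigma'$ for $L'$, and apply Lemma~\ref{lem:MakeBoundaryAlike} to each. This modifies the surface systems, without altering $m(\Sigma)-t(\Sigma)$ or $m(\Sigma')-t(\Sigma')$, to arrange that each clasp-word is in the ordered form
\[\wt w_k \;=\; 1^{\lk(K_1,K_k)}\cdot \ldots \cdot n^{\lk(K_n,K_k)},\]
and similarly for $\wt w'_k$. Since $L$ and $L'$ have the same pairwise linking numbers by hypothesis, the clasp-words of $\Sigma$ and $\Sigma'$ literally agree, so the identity map supplies a preferred alignment, and $m(\Sigma)=m(\Sigma')$ in $W$.

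Next I would use the equality of total Milnor invariants to match the triple intersection vectors. By hypothesis $\mu(L)=\mu(L')$ in $\Pow$, and combined with $m(\Sigma)=m(\Sigma')$ this yields $t(\Sigma)-t(\Sigma')\in\vspan\{v_{s,r}\mid s\neq r\}\subseteq W$. Write
\[t(\Sigma)-t(\Sigma')\;=\;\sum_k \varepsilon_k\, v_{s_k, r_k}\]
as an integer combination of indeterminacy vectors, with $\varepsilon_k\in\Z$. By Lemma~\ref{lem:BoundarySum}, a torus sum $\Sigma'\#_{s}T_r$ preserves $m(\Sigma')$ and changes $t(\Sigma')$ by $-v_{s,r}$; using the $-T_r$ variant flips the sign. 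Performing the appropriate signed sequence of torus sums on $\Sigma'$, I obtain a new surface system, still denoted $\Sigma'$, whose clasp-words are unchanged (torus sums are disjoint from the boundary) and for which $t(\Sigma)=t(\Sigma')$ in $\Z^{\binom{n}{3}}$.

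With these matched surface systems in hand, form the double exterior $M=-X_L\cup X_{L'}$ and the double surface system $F=-\Sigma\cup\Sigma'$; by the proposition preceding Theorem~\ref{thm:FillableIntersection}, the Pontryagin--Thom map $f:=p_F\colon M\to B\Z^n$ lies in $\Xi$. The triple intersection numbers satisfy
\[t_{ijk}(F)\;=\;t_{ijk}(\Sigma')-t_{ijk}(\Sigma)\;=\;0\]
for all $i<j<k$ by construction, so Theorem~\ref{thm:FillableIntersection} gives $(M,f)=0\in\Omega_3(B\Z^n)$, as required. The main technical point is the bookkeeping in the second step: confirming that the ordered-form reduction genuinely makes the clasp-words coincide (not merely cyclically equivalent) with a common basepoint choice, so that the resulting element of $\Xi$ really is detected by the vanishing of $t(F)$ via Theorem~\ref{thm:FillableIntersection}; everything else is an assembly of the preceding lemmata.
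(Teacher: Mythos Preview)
Your proposal is correct and follows essentially the same route as the paper: put both surface systems in ordered form via Lemma~\ref{lem:MakeBoundaryAlike} so the clasp-words and hence $m$ agree, deduce that $t(\Sigma)-t(\Sigma')$ lies in the span of the $v_{s,r}$, realise this difference by torus sums using Lemma~\ref{lem:BoundarySum}, and then invoke Theorem~\ref{thm:FillableIntersection}. The only cosmetic difference is that the paper applies the torus sums to $\Sigma$ rather than $\Sigma'$.
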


\begin{proof}
Let $\Sigma$ and $\Sigma'$ be two surface systems for $L$ and $L'$ respectively.
We are free to pick~$\Sigma$ and $\Sigma'$ to have the same clasp-words, as in Lemma~\ref{lem:MakeBoundaryAlike}. We have
\begin{align*}
m(\Sigma) &= m(\Sigma')\\
m(\Sigma) - t(\Sigma) &= m(\Sigma') - t(\Sigma').
\end{align*}
Consequently, $t(\Sigma) = t( \Sigma') \in \Pow$.
By Lemma~\ref{lem:BoundarySum},
we can take a suitable torus sum~$\Sigma_\mathcal{T}$ of $\Sigma$
such that $t_{ijk}(\Sigma_\mathcal{T}) = t_{ijk}( \Sigma') \in \Z$ agree for all $i,j,k$.
Recall from Theorem~\ref{thm:FillableIntersection} that the
associated double surface system~$F = -\Sigma_{\mathcal{T}} \cup \Sigma'$ gives rise, via Construction~\ref{const:CollapseMap}, to a map~$p_F \colon M\to B\Z^n \in \Xi$ such that the double exterior~$(M,p_F)$ is fillable.
\end{proof}

\section{Lower central series quotients}\label{section:lower-central-series-quotients}

Let $L = K_1 \cup \cdots \cup K_n$ be an $n$--component oriented, ordered link, with $n \geq 3$, and with a base point $\pt$ in the exterior $X_L$. In this section we write~$\pi(L) := \pi_1(X_L, \operatorname{pt})$ for the link group.  When the link is obvious from the context, we will sometimes just write~$\pi$ for the link group.
Recall once again that the lower central subgroups of a group $G$ are defined iteratively by $G_1 := G$ and $G_{k} := [G,G_{k-1}]$ for $k \geq 2$.

The abelianisation~$\pi(L)/\pi(L)_2$ is isomorphic to $\Z^n$, and the image of the zero-framed longitudes of $L$ determine the linking numbers.  Thus the linking numbers of two links $L$ and $L'$ are the same if and only if the lower central series quotients~$\pi(L)/\pi(L)_2$ and $\pi(L')/\pi(L')_2$ are isomorphic via an isomorphism that sends meridians to meridians and longitudes to longitudes.  It follows from the latter statement, via a well-known argument using Theorem~\ref{thm:milnor-thm-4} below, that the lower central series quotients~$\pi(L)/\pi(L)_3$ and $\pi(L')/\pi(L')_3$ are isomorphic, via an isomorphism that preserves a choice of oriented ordered meridians.  Milnor's triple linking numbers~$\ol{\mu}_L(ijk)$ can be computed from the image of the longitudes in $\pi(L)/\pi(L)_3$.  The next natural step should be:
 \begin{enumerate}[(i)]
   \item that the triple linking numbers of two links $L$ and $L'$ with the same linking numbers coincide if and only if the quotients $\pi(L)/\pi(L)_3$ and $\pi(L')/\pi(L')_3$ are isomorphic via an isomorphism that sends meridians to meridians and longitudes to longitudes;
   \item  that this implies that the quotients $\pi(L)/\pi(L)_4$ and $\pi(L')/\pi(L')_4$ are isomorphic via an isomorphism that sends meridians to meridians.
 \end{enumerate}
 In this section we prove that these indeed hold if one uses the refined triple linking numbers.  More precisely, we will prove the implications (\ref{item:main-thm-1})$\implies$(\ref{item:main-thm-4}) $\implies$(\ref{item:main-thm-3}) of Theorem~\ref{thm:Main-intro}, and Theorem~\ref{theorem:lower-central-series-intro}.

\medbreak

We begin by recalling Milnor's presentation for lower central series quotients of a link group.
Pick a \emph{basing} of the link~$L$, that is a choice of base point~$b_i \in T_i = \partial \nu K_i$
and a choice of path~$\beta_i$ in $X_L = S^3 \sm \bigcup_i \nu K_i$ from
$\operatorname{pt}$ to $b_i$.
This defines meridians~$\mu_i \in \pi$ and
zero-framed longitudes~$\lambda_i \in \pi$, based at~$\operatorname{pt}$.
We write~$F = F\langle \mu_1, \ldots, \mu_n\rangle$ for the free group on the generators~$\mu_1, \ldots, \mu_n$, which is equipped with a
map~$F \to \pi$. Since $F/ [F,F] \to \pi/ [\pi, \pi]$ is surjective, one can verify
algebraically that the composition~$F \to \pi \to \pi/\pi_k$ is an epimorphism;
see~e.g.~\cite[Rewriting Proposition~4.1]{Cochran90}.
As a consequence, write
$\lambda_i = \ell_i (\mu_1, \ldots, \mu_n)$ as a product of the chosen meridians~$\mu_i$ in the group~$\pi/ \pi_k$.
Independently of the choice of the words~$\ell_i$ made,
Milnor showed that the $k$-th lower central series quotient admits the following presentation~\cite[proof of Theorem 4]{Milnor:1957-1}.

\begin{theorem}[Milnor]\label{thm:milnor-thm-4}
Let $L$ be a link with a basing. Denote the associated meridians by~$\mu_i$,
and the zero-framed longitudes by~$\lambda_i$.
Let $\ell_i \in F$ be any word that is sent to the class of $\lambda_i$ in $\pi/ \pi_k$.
Then the lower central series
quotient~$\pi / \pi_k$ admits the following presentation
\[ \pi / \pi_k \xleftarrow{\cong} \Big \langle \mu_1, \ldots, \mu_n  \Bigm\vert [\mu_i, \ell_i],
	F_k\Big \rangle, \]
where the group~$F_k$ is the $k$-th lower central series subgroup
of the free group~$F$ on~$ \mu_1, \ldots, \mu_n$.
\end{theorem}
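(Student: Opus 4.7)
The plan is to construct a natural surjection from the proposed presentation onto $\pi/\pi_k$ and then argue its injectivity by induction on $k$. First, I would verify that the map $\Phi\colon\bigl\langle\mu_1,\ldots,\mu_n \bigm\vert [\mu_i,\ell_i], F_k\bigr\rangle \to \pi/\pi_k$ sending each generator $\mu_i$ to the class of its meridian is well-defined. Clearly $F_k$ maps into $\pi_k$. For the relator $[\mu_i,\ell_i]$, writing the image of $\ell_i$ in $\pi$ as $\lambda_i z$ with $z \in \pi_k$, the commutator identity $[\mu_i, \lambda_i z] = [\mu_i, \lambda_i]\cdot \lambda_i[\mu_i,z]\lambda_i^{-1}$ reduces well-definedness to two facts: $[\mu_i,\lambda_i]=1$ in $\pi$, because the meridian and zero-framed longitude commute on the peripheral torus; and $[\mu_i,z]\in [\pi,\pi_k]=\pi_{k+1}\subseteq \pi_k$, since $z\in\pi_k$.

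Next I would establish surjectivity of $\Phi$, which reduces to surjectivity of $F\to \pi/\pi_k$. I would prove this by induction on $k$. The case $k=2$ is the classical statement that $H_1(X_L;\Z)\cong \Z^n$ is freely generated by the meridian classes. For the inductive step, given $g\in \pi$, the hypothesis yields $f\in F$ with $g \equiv f \pmod{\pi_{k-1}}$; then $gf^{-1}\in \pi_{k-1}$, and since elements of $\pi_{k-1}/\pi_k$ are generated by commutators $[a,b]$ with $a\in\pi$, $b\in\pi_{k-2}$, lifting $a$ and $b$ to $F$ using the inductive hypothesis and observing that changing the lifts only alters $[a,b]$ modulo $\pi_k$ realises every element of $\pi/\pi_k$ as a word in the $\mu_i$.

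The main step, and I expect the principal obstacle, will be the proof of injectivity of $\Phi$. I would argue by induction on $k$, comparing the short exact sequence
\[
1\to\pi_{k-1}/\pi_k \to \pi/\pi_k\to \pi/\pi_{k-1}\to 1
\]
with its analogue for the presentation group, where the rightmost vertical map is an isomorphism by the inductive hypothesis. The five lemma then reduces injectivity of $\Phi$ to showing that the induced map on the left terms is an isomorphism. The technical heart is that the image of $F_{k-1}$ in the presentation group surjects onto $\pi_{k-1}/\pi_k$ with kernel normally generated by the $[\mu_i,\ell_i]$ modulo $F_k$. Geometrically, this expresses the fact that the Wirtinger relations, rewritten as words in the chosen meridians, reduce modulo $F_k$ to consequences of the single commutators $[\mu_i,\ell_i]$, which in turn encode the bounding disc in the peripheral torus for $\mu_i\lambda_i\mu_i^{-1}\lambda_i^{-1}$. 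Verifying this rewriting in detail, while carefully tracking the lower central filtration, will be the main technical challenge.
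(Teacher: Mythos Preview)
Your well-definedness and surjectivity arguments are correct and standard. For injectivity, however, the five-lemma reduction does not actually simplify the problem: showing that the induced map on the leftmost terms is injective unwinds to the statement that any $f\in F_{k-1}$ mapping into $\pi_k$ already lies in the normal closure of the $[\mu_i,\ell_i]$ together with $F_k$, which is just the graded piece of the original injectivity claim. Your remark that the Wirtinger relations ``reduce modulo $F_k$ to consequences of the single commutators $[\mu_i,\ell_i]$'' is the right geometric idea, and it is in fact how Milnor's original argument proceeds---but that rewriting is the entire content of his proof, not a detail to be filled in afterwards. As written, the proposal identifies the target but supplies no mechanism to hit it; the inductive scaffolding trades one unproved injectivity statement for another of equal strength.

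The paper takes a different route that avoids the rewriting entirely. It cuts the link open along a disc meeting each component once, obtaining a string link in $D^2\times I$. The string link exterior $R$ is a homology cobordism from an $n$-punctured disc to itself, so Stallings' theorem gives $F/F_k\xrightarrow{\cong}\pi_1(R)/\pi_1(R)_k$ directly; this single application of Stallings is the replacement for the missing injectivity input. The link exterior is then recovered by regluing, which on $\pi_1$ is an HNN extension followed by killing the stable letter, and the claimed presentation falls out of a short van Kampen calculation. If you prefer to stay with your algebraic outline, you would need to carry out Milnor's explicit rewriting of the Wirtinger presentation; the five-lemma framework by itself will not close the induction.
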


The statement above is slightly stronger than \cite[Theorem 4]{Milnor:1957-1}, and can be extracted from Milnor's original proof,
but is not readily obtained from the statement of Milnor's original theorem.
For the convenience of the reader, we sketch a different proof that is well-known to experts.

\begin{proof}
Isotope the paths~$\beta_i$ from the basing to be disjoint and embedded.
Pick a~$2$--disc~$D$ that contains the paths~$\beta_i$ and intersects
each link component in a single intersection point~$p_i$ with positive orientation.
The complement~$D \sm \bigcup \{ p_i\}$ has fundamental group
the free group
\[ \pi_1 \Big( D \sm \bigcup_i \{p_i \}, \operatorname{pt} \Big) \xleftarrow{\cong} F\langle \mu_1, \ldots, \mu_n\rangle \]
on the given meridians~$\mu_i$.
Now remove an open tubular neighbourhood $\nu\partial D$ of the boundary~$\partial D$ from $S^3$, with $\nu\partial D$ chosen small enough that it remains disjoint from~$L$.
The result is a framed solid torus~$V$ containing the link.
Cut along~$D' := D \cap V$, that is delete $D'$ and compactify the two ends, each with a copy of~$D'$. We obtain a solid cylinder~$D^2 \times I$
containing a collection of $n$ strands~$\{ \gamma_i\}$, such that the two endpoints of $\gamma_i$ are at $p_i\times\{0\}$ and $p_i \times \{1\}$.
This is a string link associated to $L$; see e.g.~\cite{LeDimet:1988-1}, \cite[Section~2]{HL90}.
Both the top disc and the bottom disc come with
an identification~$D^2 \times \{i\} \cong D'$.
The solid cylinder $D^2 \times I$ comes with a map to $S^3$ given by identifying $D^2 \times \{0\}$ with $D^2 \times \{1\}$ to recover $V$, and then including $V \subset S^3$.

The exterior~$R = D^2 \sm \bigcup \nu \gamma_i$
of these $n$ strands in the cylinder
is a relative bordism from $D' \sm \bigcup_i \nu p_i$ to itself.
A Mayer-Vietoris sequence argument shows this relative bordism is
a homology bordism.
Note that $R$ is equipped with two base points~$\operatorname{pt}^\pm$
from the two inclusions~$D'\subset D^2 \times \{i\}$.
Now deduce from Stallings' Theorem~\cite{Stallings65} that the inclusion
induced map
\[ F / F_k \xrightarrow{\cong}
	\pi_1(R, \operatorname{pt}^+ ) / \pi_1(R, \operatorname{pt}^+)_k \]
is an isomorphism.

Pick a path~$\tau$ on the boundary~$\partial \big( D^2 \times I \big)$,
connecting~$\operatorname{pt}^+$ with $\operatorname{pt}^-$, that maps to a meridian of $\partial D$ in $S^3 \sm \nu \partial D$ under the map $D^2 \times I \to S^3$.
Note that the longitudes~$\lambda_i$ lift to paths in $R$ from $\operatorname{pt}^-$ to $\operatorname{pt}^+$.
We turn these paths into loops based at $\operatorname{pt}^+$
by defining~$\tau_i := \tau*\lambda_i$.
Next we glue to recover the link exterior.
The link exterior in the solid torus~$V$ has fundamental group
\[ \pi_1(V \sm \nu L, \operatorname{pt})
	= \Big\langle \pi_1(R,\operatorname{pt}^+), t \Bigm\vert
		t \mu_i t^{-1} = \tau_i \mu_i \tau_i^{-1} \Big\rangle. \]
Once we fill $\nu \partial D$ back in, we get
\[ \pi_1(X_L, \operatorname{pt})
	= \Big\langle \pi_1(R,\operatorname{pt}^+) \Bigm\vert
		\mu_i = \tau_i \mu_i \tau_i^{-1}
	\Big\rangle. \]
Now calculate the lower central series quotients.
\begin{align*}
& \pi_1(X_L, \operatorname{pt}) / \pi_1(X_L, \operatorname{pt})_k\\
	\xleftarrow{\cong} & \Big\langle \pi_1(R,\operatorname{pt}^+) \Bigm\vert
		\mu_i = \tau_i \mu_i \tau_i^{-1}, i=1,\dots,n,
	 	\pi_1(R,\operatorname{pt}^+)_k\Big\rangle\\
	=& \Big\langle \pi_1(R,\operatorname{pt}^+) \Bigm\vert
		\mu_i = \tau_i \mu_i \tau_i^{-1}, \tau_i = \ell_i(\mu_1, \ldots, \mu_n), i=1,\dots,n,
	 \pi_1(R,\operatorname{pt}^+)_k\Big\rangle\\
	=& \Big\langle \pi_1(R,\operatorname{pt}^+) \Bigm\vert
		[\mu_i,\ell_i(\mu_1, \ldots, \mu_n)], i=1,\dots,n,
	 \pi_1(R,\operatorname{pt}^+)_k\Big\rangle\\
	 \xleftarrow{\cong}& \Big\langle \mu_1, \ldots, \mu_n \, \Bigm\vert [\mu_i, \ell_i], i=1,\dots,n, F_k\Big\rangle
\end{align*}
The composition of these maps sends both $\mu_i$ to the~$i$-th meridian, and $\ell_i$
to the $i$-th longitude.
\end{proof}

When do two presentations of the above form give rise
to the same group? We see that the group only depends
on the words~$\ell_i \in F$.
In fact, something stronger is true:
already the cosets~$\ell_i \in F / F_3$ in the lower central series quotient
determine the group.  This follows from Lemma~\ref{lem:CommutatorCalculus}~(\ref{comm-calc-item-3}) below, with $k=3$, $a=\mu_i$ and $b = \ell_i$.
These elements $\ell_i \in F / F_3$ can be manipulated using commutator calculus, which
offers the following relations, recorded here for later use.

\begin{lemma}\label{lem:CommutatorCalculus}
For arbitrary elements~$a,b \in F$,
the following relations hold:
\begin{enumerate}
\item\label{comm-calc-item-2} $a^g:=g^{-1}a g = a [a^{-1}, g] = a \in F/F_{k+1}$ for all~$g \in F_k$.
\item\label{comm-calc-item-3} $[a, gb] = [a,g] [a,b]^g = [a,b] \in F/F_{k+1}$ for all~$g \in F_k$.
\end{enumerate}
\end{lemma}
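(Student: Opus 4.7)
The plan is to prove both identities by first establishing the asserted equalities as genuine identities in the free group $F$ (before passing to any lower central series quotient), and then exhibiting the resulting ``correction terms'' as commutators that lie in $F_{k+1}$. The only nontrivial input is the classical filtration property $[F_i, F_j] \subseteq F_{i+j}$, valid for all $i, j \geq 1$, which I would take as a standard and well-known fact about the lower central series.

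For part~(\ref{comm-calc-item-2}), I would first verify $a^g = a \cdot [a^{-1}, g]$ as a literal identity in $F$, by expanding both sides according to the chosen conventions for conjugation and the bracket. Once this is in hand, the congruence is immediate: since $g \in F_k$ and $a^{-1} \in F = F_1$, the commutator $[a^{-1}, g]$ lies in $[F_1, F_k] \subseteq F_{k+1}$, so $a^g \equiv a$ modulo $F_{k+1}$.

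For part~(\ref{comm-calc-item-3}), I would similarly verify $[a, gb] = [a, g] \cdot [a, b]^g$ as a free-group identity; this is the standard distributivity of the commutator over products in the second slot, and falls out of a direct expansion. To conclude the congruence modulo $F_{k+1}$, the argument proceeds in two steps. First, $[a, g] \in [F, F_k] \subseteq F_{k+1}$ because $g \in F_k$. Second, applying part~(\ref{comm-calc-item-2}) with $a$ replaced by the element $[a, b] \in F$ and with the same $g \in F_k$, we obtain $[a, b]^g \equiv [a, b]$ modulo $F_{k+1}$. Combining these two observations yields $[a, gb] \equiv [a, b]$ modulo $F_{k+1}$, as required.

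There is no real obstacle here: the argument is pure commutator calculus and the heavy lifting is done by the classical filtration property $[F_i, F_j] \subseteq F_{i+j}$. The only point requiring genuine care in practice is keeping the conventions for the bracket $[\,\cdot\,,\,\cdot\,]$ and for conjugation $x^g$ consistent when carrying out the free-group verifications; once those conventions are pinned down, both parts reduce to a one-line formal calculation, and this is why I would prefer to treat the lemma as a packaging of standard identities for convenient later use, rather than as a result whose proof carries independent content.
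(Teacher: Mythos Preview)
Your proposal is correct and follows essentially the same approach as the paper's proof: expand the commutator identities formally in $F$, then observe that the correction terms lie in $F_{k+1}$ and apply part~(\ref{comm-calc-item-2}) to finish part~(\ref{comm-calc-item-3}). The only minor difference is that the paper appeals directly to the definition $F_{k+1} = [F, F_k]$ rather than the more general filtration property $[F_i, F_j] \subseteq F_{i+j}$, but since you only use the case $i=1$ this amounts to the same thing.
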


\begin{proof}
By definition, we have~$[a, g]\in F_{k+1} $ for all $g \in F_k$.
Equality~(\ref{comm-calc-item-2}) is obtained by expansion of the commutators,
and Equality~(\ref{comm-calc-item-3}) follows by expanding commutators once again, and then applying~(\ref{comm-calc-item-2}).
\end{proof}

Our next main goal is to compute the
words~$\ell_i \in F/F_3$ solely from
the combinatorics of a C-complex, so let us bring a C-complex
into the picture -- in this section we will not consider arbitrary surface systems, only C-complexes.
Let $L$ be an oriented ordered link together with a C-complex~$\{ \Sigma_i \}$. The orientation on each~$K_i$ and the orientation of $S^3$ induces  orientations of the double point arcs of the~C-complex.
Denote the surfaces~$\Sigma_i \cap X_L$ by $C_i$.
Each~$C_i$ has exactly one boundary component~$\lambda_{C_i}$
in~$T_i = \partial \nu K_i \subset X_L$, which is a
zero-framed push-off of~$K_i$.
The boundary component~$\lambda_{C_i}$ is called the \emph{longitudinal}
boundary, and the other boundary components are called \emph{meridional}.

Now we temporarily fix a link component~$K$ of $L$.
Pick a base point~$b_K$ on $\lambda_{C_i}$
that is disjoint from the set~$\Sigma_j$ for all $j \neq i$.
Connect~$b_K$ to the base point~$\operatorname{pt} \in X_L$ of the exterior
via a path~$\beta_K$ that is disjoint from each surface~$C_i$, and approaches~$b_K$ from the negative side.
This defines classes for the meridian~$\mu_i \in \pi$ and
a longitude~$\lambda_i = \big( \beta_i\big)_\# \lambda_{C_i} \in \pi$, where the $\big( \beta_i\big)_\#$ is the change of base point map on based loops that conjugates with the path $\beta_i$ to change the base point from $\beta_i(1)$ to~$\beta_i(0)$.
The classes $\mu_i$ and $\lambda_i$ are respectively the meridians and the longitudes associated to the basing~$\{ \beta_i \}$.

\begin{definition}\label{defn:subordination}
Let $L$ be a link with a C-complex~$\{ \Sigma_i\}$.
A basing~$\{ \beta_i \}$ of the link~$L$ as described above is said to be
\emph{subordinate} to the C-complex.
\end{definition}

We proceed by introducing further notation, that will help us with the calculation
of~$\ell_i \in F$, a word in the $\mu_i$ such that~$\ell_i(\mu_1, \ldots, \mu_n) = \lambda_i$ modulo length three commutators.
Order the intersection points
\[ I_K = \{ a_{K,1}, \ldots, a_{K, m_K} \} = \big\{ x \in \lambda_{C_K} \bigm\vert  x \in \Sigma_j
\text{ for some } K_j \neq K \big\},\]
starting from base point~$b_K$ and traversing~$\lambda_{C_K}$ in the positive direction.
Let~$r \in \{1,\dots,m_K\}$.  Denote the path from~$b_K$ to $a_{K,r}$ following $\lambda_{C_i}$ in the positive direction
by~$\alpha_{K, r}$.

Consider an intersection arc in $C_K$ corresponding to a clasp. This arc connects
the longitudinal boundary with a meridional boundary component.
Follow the intersection arc that emanates from a point~$a_{K, r} \in I_K$
and terminates at a point~$a_{J_r, s} \in I_{J(r)}$
of another link component~$J_r = K_j$, to define a path~$\iota_{K,r}$
from~$a_{K,r}$ to~$a_{J_r,s}$, where this equation defines $s$.
Let $\sigma_K \colon \{1, \ldots, m_K\} \to \{1, \ldots, n\}$ be the map
that associates~$r \mapsto s$ for each $r \in \{1,\dots,m_K\}$.

We will introduce another path~$\gamma_{K, r} \in \pi_1(C_K,b_K)$.
First note that traversing the meridional boundary
starting at $a_{J_r,s}$ defines a loop~$\mu_{K,r} \in \pi_1(C_K, a_{J_r,s})$,
Observe that $\mu_{K,r}$ is freely homotopic to a meridian
of the knot component~$J_r$.
We base~$\mu_{K,r}$ at $b_K$ by defining
\[ \gamma_{K,r} = \big( \alpha_{K,r} * \iota_{K,r} \big)_\#  \mu_{K,r} \in \pi_1(C_K,b_K). \]
We direct the reader to Figure~\ref{fig:Lollipop} for illustrations of the defined paths.

\begin{figure}
\begin{tikzpicture}

\tikzset{
    partial ellipse/.style args={#1:#2:#3}{
        insert path={+ (#1:#3) arc (#1:#2:#3)}
    }
}


\draw[thin, black] (0.5,1.05)
to [out=55, in=255] (0.65, 1.35);

\draw[thin, black] (-1.35,-0.95)
to [out=45, in=225] (-1.05, -0.6);

\draw[thin, black] (1,-0.65)
to [out=-45, in=135] (1.3, -1);

\draw[thin, black] (0,0) [partial ellipse=70:120:1.8 and 1.45];
\draw[thick, black, loosely dotted] (0,0) [partial ellipse=120:150:1.8 and 1.45];
\draw[thin, black] (0,0) [partial ellipse=150:270:1.8 and 1.45];

\draw[thin, black, dashed] (0,0.3) [partial ellipse=0:180:0.3 and 0.15];
\draw[thin, black] (0,0.3) [partial ellipse=180:360:0.3 and 0.15];
\draw[thin, black] (0.3,0.3)
to [out=down, in=150] (0.4, -0.1);
\draw[thin, black] (-0.3,0.3)
to [out=down, in=30] (-0.4, -0.1);

\draw[very thick, red] (0.4,0.9) [partial ellipse=0:360:0.2 and 0.17];

\draw[thin, black] (0.9,-0.5) [partial ellipse=0:360:0.2 and 0.17];

\draw[thin, black] (-0.9,-0.5) [partial ellipse=0:360:0.2 and 0.17];

\draw[very thick, cyan] (0,0) [partial ellipse=-90:70:1.8 and 1.45];

\begin{scope}[shift={(0, -1.45)}]
\draw[very thick, cyan] (-0.1, 0.1) -- (0.1, -0.1);
\draw[very thick, cyan] (-0.1, -0.1) -- (0.1, 0.1);
\end{scope}

\begin{scope}[shift={(1.8,0)}]
\draw[very thick, cyan] (-0.1, -0.1) -- (0, 0);
\draw[very thick, cyan] (0.1, -0.1) -- (0, 0);
\end{scope}

\begin{scope}[shift={(0.38,0.73)}]
\draw[very thick, red] (0.1, 0.12) -- (0, 0);
\draw[very thick, red] (0.1, -0.08) -- (0, 0);
\end{scope}

\begin{scope}[shift={(0.5,1.05)}, rotate=60]
\draw[very thick, red] (-0.1, 0.08) -- (0.1, -0.08);
\draw[very thick, red] (-0.1, -0.08) -- (0.1, 0.08);
\end{scope}

\node at (0, -1.8) {$b_K$};
\node at (1.6, -1.3) {$a_{K,1}$};
\node at (1.1, 1.6) {$a_{K,2}$};
\node at (-1.6, -1.3) {$a_{K,m_K}$};
\node at (-0.2, 0.9) {\small{$\mu_{K,2}$}};
\node at (2.3, 0) {$\alpha_{K,2}$};

\node at (-0.5,1.7) {$\iota_{K,2}$};
\draw[->] (-0.1,1.7)
to [out=-10, in=140] (0.52, 1.22);

\begin{scope}[shift={(6, 0)}]
\draw[very thick, blue] (0,0) [partial ellipse=-90:70:1.8 and 1.45];
\draw[thin, black] (0,0) [partial ellipse=70:120:1.8 and 1.45];
\draw[thick, black, loosely dotted] (0,0) [partial ellipse=120:150:1.8 and 1.45];
\draw[thin, black] (0,0) [partial ellipse=150:270:1.8 and 1.45];

\draw[thin, black, dashed] (0,0.3) [partial ellipse=0:180:0.3 and 0.15];
\draw[thin, black] (0,0.3) [partial ellipse=180:360:0.3 and 0.15];
\draw[thin, black] (0.3,0.3)
to [out=down, in=150] (0.4, -0.1);
\draw[thin, black] (-0.3,0.3)
to [out=down, in=30] (-0.4, -0.1);

\draw[very thick, red] (0,-0.05) [partial ellipse=125:415:0.7 and 0.25];

\draw[very thick, blue] (0.4,0.9) [partial ellipse=0:360:0.2 and 0.17];
\draw[very thick, blue] (0.5,1.05)
to [out=55, in=255] (0.65, 1.35);

\draw[very thick, red] (0, -1.47)
to [out=up, in=down] (0,-0.30);

\begin{scope}[shift={(0.41,0.73)}, rotate=-20]
\draw[very thick, blue] (-0.1, -0.1) -- (0, 0);
\draw[very thick, blue] (-0.1, 0.1) -- (0, 0);
\end{scope}

\begin{scope}[shift={(0.3,-0.27)}, rotate=10]
\draw[very thick, red] (-0.1, -0.1) -- (0, 0);
\draw[very thick, red] (-0.1, 0.1) -- (0, 0);
\end{scope}

\begin{scope}[shift={(0, -1.45)}]
\draw[very thick] (-0.1, 0.1) -- (0.1, -0.1);
\draw[very thick] (-0.1, -0.1) -- (0.1, 0.1);
\end{scope}

\node at (0, -1.8) {$b_K$};
\node at (1.05, 0) {$\delta_K$};
\node at (1.6, -1.3) {$a_{K,1}$};
\node at (1.1, 1.6) {$a_{K,2}$};
\node at (2.3, 0) {$\gamma_{K,2}$};

\end{scope}

\end{tikzpicture}
\caption{Paths in $C_K\subset \Sigma_K$.}
\label{fig:Lollipop}
\end{figure}

Pick a collar of the longitudinal boundary of the surface~$\Sigma_K$
that contains all intersection arcs and all loops $\mu_{K,r}$.
Note that the inside boundary of that collar is a separating
curve that cuts~$\Sigma_K$ into two components:
one containing all the genus, and
an annulus containing the intersection arcs and loops $\mu_{K,r}$.
Connect the inside boundary to $b_K$ by a path in the complement of the intersection arcs.
This defines a loop~$\delta_K \in \pi_1(C_K, b_K)$.  This is also illustrated on the right of Figure~\ref{fig:Lollipop}.
From now on we consider all loops as living in the fundamental group of the link exterior via the appropriate inclusion induced maps,
changing the notation neither for the loop nor its base point.

\begin{lemma}\label{lem:GenusTripleCommutator}
The loop~$\delta_K \in \pi_1(X_L,b_K)$ is a length $3$ commutator.
\end{lemma}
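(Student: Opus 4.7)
The plan is to identify $\delta_K$ with the boundary of a genus-$g$ subsurface $\Sigma_K^g \subseteq \Sigma_K$, where $g := g(\Sigma_K)$, to write $\delta_K$ as a product of $g$ commutators, and to show that each such commutator lies in $\pi_3$, where we abbreviate $\pi := \pi_1(X_L, b_K)$ throughout this proof.

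By the construction preceding the lemma, the collar decomposes $\Sigma_K$ into the collar itself and a complementary subsurface $\Sigma_K^g$ of genus $g$ with a single boundary component, namely the inside boundary curve that defines $\delta_K$. I would fix standard generators $a_1, b_1, \ldots, a_g, b_g$ of $\pi_1(\Sigma_K^g)$ whose product of commutators represents this boundary, and transport them to $b_K$ via the chosen connecting path to obtain based loops $A_j, B_j \in \pi$ for $j=1, \ldots, g$, giving
\[ \delta_K = \prod_{j=1}^g [A_j, B_j] \in \pi. \]

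The key observation is that, because $\{\Sigma_i\}$ is a C-complex, every pairwise intersection of $\Sigma_K$ with one of the other surfaces $\Sigma_i$, $i \neq K$, is a clasp arc, and these arcs together with the meridional loops $\mu_{K,r}$ all lie in the collar by construction. Hence each $A_j$ and $B_j$ can be realised by a loop whose non-basing portion lies in $\Sigma_K^g$, so that $A_j$ and $B_j$ are disjoint from every $\Sigma_i$ with $i \neq K$. Combining this with the intersection formula $\lk(A_j, K_i) = A_j \cdot \Sigma_i$ from Section~\ref{section:fillings-of-link-complements} yields $\lk(A_j, K_i) = 0$ for all $i \neq K$, and consequently $[A_j] = n_j [\mu_K]$ and $[B_j] = m_j [\mu_K]$ in $H_1(X_L;\Z) = \bigoplus_i \Z\mu_i$ for some integers $n_j, m_j$.

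Writing $A_j = \mu_K^{n_j} \alpha_j$ and $B_j = \mu_K^{m_j} \beta_j$ with $\alpha_j, \beta_j \in \pi_2$, two applications of Lemma~\ref{lem:CommutatorCalculus} with $k=2$ give
\[ [A_j, B_j] \equiv [\mu_K^{n_j}, \mu_K^{m_j}] = 1 \pmod{\pi_3}, \]
so $[A_j, B_j] \in \pi_3$ for each $j$, and hence $\delta_K = \prod_{j=1}^g [A_j, B_j] \in \pi_3$, as required. The main point requiring care is that the collar must be taken large enough that $\Sigma_K^g$ is genuinely disjoint from every clasp arc and every meridional loop, since this is what validates the linking number computation; in the C-complex setting this is easily arranged because all pairwise intersections are clasp arcs.
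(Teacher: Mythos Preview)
Your argument is correct and follows essentially the same route as the paper: identify $\delta_K$ as the boundary of the genus part $S$ of $C_K$, observe that loops in $S$ avoid every $\Sigma_i$ with $i\neq K$, deduce they lie in $\pi_2$, and conclude $\delta_K\in\pi_3$. The only cosmetic difference is that you allow $[A_j]=n_j[\mu_K]$ rather than arguing directly that $A_j$ is nullhomologous (which also holds, since a loop on $\Sigma_K$ can be pushed off and so has vanishing intersection with $\Sigma_K$ as well); your weaker claim still suffices because $[\mu_K^{n_j},\mu_K^{m_j}]=1$.
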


\begin{proof}
The loop~$\delta_K$ bounds the surface~$S$ in $C_K$ given by
the complement of a collar of the longitudinal boundary.  Since that collar contains
all of the intersection arcs, we see that $S \cap C_j = \emptyset$ for all~$j$.
This implies that all loops in $S$ are length $2$ commutators, since they are zero in
$H_1(X_L; \Z) \cong \Z^n$. Consequently, the loop~$\delta_K$,
as the boundary of $S$, is a length $3$ commutator.
\end{proof}

Consider an intersection arc $\iota_{K,r}$ in $C_K$ connecting~$a_{K,r}$
with $a_{J_r,s}$.
Write $g_{K,r}$ for the
loop~$\beta_{K} * \alpha_{K,r} * \iota_{K,r} * \alpha^{-1}_{J_r,s} * \beta_{J_r}^{-1} \in \pi = \pi_1(X_L,\pt)$.

\begin{lemma}\label{lem:MeridianRewrite}
For each $r \in \{1,\dots,m_K\}$, the loop~$\big( \beta_K \big)_\# \gamma_{K,r} \in \pi$ is a conjugate
of the meridian~$\mu_{J_r} \in \pi$, namely~$\big( \beta_K \big)_\# \gamma_{K,r} = {\big(\mu_{\sigma_K(r)} \big)}^{g_{K,r}} = {\big(\mu_{J_r} \big)}^{g_{K,r}} \in \pi$.
\end{lemma}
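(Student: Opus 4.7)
My plan is a direct computation via change of basepoint, reducing everything to the fact that $\pi_1(T_{J_r})$ is abelian. First I would unfold the definition of $\gamma_{K,r}$ to obtain
\[
(\beta_K)_\# \gamma_{K,r} = \beta_K * \alpha_{K,r} * \iota_{K,r} * \mu_{K,r} * \iota_{K,r}^{-1} * \alpha_{K,r}^{-1} * \beta_K^{-1}
\]
in $\pi = \pi_1(X_L, \pt)$. Abbreviating $p_1 := \beta_K * \alpha_{K,r} * \iota_{K,r}$ and $p_2 := \beta_{J_r} * \alpha_{J_r, s}$, both paths from $\pt$ to $a_{J_r, s}$, this reads $(\beta_K)_\# \gamma_{K,r} = p_1 * \mu_{K,r} * p_1^{-1}$; and by definition $g_{K,r} = p_1 * p_2^{-1}$.

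Next I would rewrite $\mu_{K,r}$ in terms of $\mu_{J_r}$. The loop $\mu_{K,r}$ is a meridional boundary component of $C_K$ based at $a_{J_r,s}$, and in particular it lies entirely on the torus $T_{J_r}$, where it represents the meridian class of $K_{J_r}$. The path $\alpha_{J_r, s}$ from $b_{J_r}$ to $a_{J_r, s}$ also lies on $T_{J_r}$, along $\lambda_{C_{J_r}}$. Because $\pi_1(T_{J_r}) \cong \Z^2$ is abelian, the loop $\alpha_{J_r, s} * \mu_{K,r} * \alpha_{J_r, s}^{-1}$ represents, in $\pi_1(T_{J_r}, b_{J_r})$, a meridian of $K_{J_r}$ based at $b_{J_r}$. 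Including this identity into $\pi_1(X_L, b_{J_r})$ and then conjugating by $\beta_{J_r}$ yields $\mu_{J_r} = p_2 * \mu_{K,r} * p_2^{-1}$, so $\mu_{K,r} = p_2^{-1} * \mu_{J_r} * p_2$ in $\pi$.

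Substituting this back into the expression above gives
\[
(\beta_K)_\# \gamma_{K,r} = p_1 * p_2^{-1} * \mu_{J_r} * p_2 * p_1^{-1} = g_{K,r} * \mu_{J_r} * g_{K,r}^{-1} = (\mu_{J_r})^{g_{K,r}},
\]
as desired. The only real subtlety, and the main obstacle, is pinning down orientations: one has to verify that the orientation convention for $\mu_{K,r}$ (obtained by traversing the meridional boundary starting at $a_{J_r,s}$, with direction inherited from the orientations on $\Sigma_K$ and on $K_{J_r}$) agrees with that of the chosen meridian $\mu_{J_r}$, so that the meridian appears to the first power rather than as its inverse. This is a bookkeeping exercise that follows from the clasp orientation conventions recorded in Remark~\ref{remark:clasp-orientations}, together with the assumption that $\beta_K$ approaches $b_K$ from the negative side of~$C_K$ as stipulated in the set-up preceding Definition~\ref{defn:subordination}.
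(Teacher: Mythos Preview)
Your argument is correct and follows essentially the same route as the paper's proof: both unfold the definitions to express $(\beta_K)_\#\gamma_{K,r}$ as $p_1*\mu_{K,r}*p_1^{-1}$ and then identify $p_2*\mu_{K,r}*p_2^{-1}$ with $\mu_{J_r}$. The only difference is in how that last identification is justified: the paper phrases it as ``sliding the meridian along $J_r$'', while you use that both $\mu_{K,r}$ and $\alpha_{J_r,s}$ lie on the boundary torus $T_{J_r}$, whose fundamental group is abelian. These are two descriptions of the same homotopy, so the approaches coincide.
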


\begin{proof}
A meridian of $J_r$ at~$a_{J_r,s}$, which is based to $\operatorname{pt}$
via the whisker~$\beta_{J_r} * \alpha_{J_r,s}$, is homotopic to $\mu_{J_r}$.  This is via a homotopy sliding the meridian $\big( \beta_K \big)_\# \gamma_{K,r}$ along~$J_r$ to the chosen meridian of $J_r$. Recording the new basing path created during the slide tells us what we need to conjugate by.  It might help to inspect Figure~\ref{fig:MeridianExpression}.  More precisely, we have:
\begin{align*}
  {\big(\mu_{J_r} \big)}^{g_{K,r}}
  =& \beta_K * \alpha_{K,r} * \iota_{K,r} * \alpha_{J_r,s}^{-1} * \beta_{J_r}^{-1} * \mu_{J_r} * \beta_{J_r} * \alpha_{J_r,s} * \iota_{K,r}^{-1} *  \alpha_{K,r}^{-1} * \beta_K^{-1} \\
  =& \beta_K * \alpha_{K,r} * \iota_{K,r} * \mu_{K,r} * \iota_{K,r}^{-1} *  \alpha_{K,r}^{-1} * \beta_K^{-1} \\
   =& \beta_K * \gamma_{K,r} * \beta_K^{-1}.
  \end{align*}

\begin{figure}
\begin{tikzpicture}[scale=0.9]


\tikzset{
    partial ellipse/.style args={#1:#2:#3}{
        insert path={+ (#1:#3) arc (#1:#2:#3)}
    }
}


\draw[very thick, black, dashed, opacity=0.4] (-2.5,0) [partial ellipse=0:35:3.2 and 1.7];
\draw[thin, black, dashed, opacity=0.4] (-2.5,0) [partial ellipse=0:34:3.5 and 2];
\draw[very thick, black, dashed, opacity=0.4] (-2.5,0) [partial ellipse=0:33:3.8 and 2.3];

\draw[very thick, black] (-2.5,0) [partial ellipse=58:90:3.2 and 1.7];
\draw[thin, black] (-2.5,0) [partial ellipse=55:90:3.5 and 2];
\draw[very thick, black] (-2.5,0) [partial ellipse=52:90:3.8 and 2.3];

\draw[very thick, red, dashed, opacity=0.4] (-1,0) [partial ellipse=0:180:0.3 and 0.2];


\draw[very thick, black, dashed, opacity=0.4] (2.5,0) [partial ellipse=180:215:3.2 and 1.7];
\draw[thin, black, dashed, opacity=0.4] (2.5,0) [partial ellipse=180:214:3.5 and 2];
\draw[very thick, black, dashed, opacity=0.4] (2.5,0) [partial ellipse=180:213:3.8 and 2.3];

\draw[very thick, black] (2.5,0) [partial ellipse=238:270:3.2 and 1.7];
\draw[thin, black] (2.5,0) [partial ellipse=235:270:3.5 and 2];
\draw[very thick, black] (2.5,0) [partial ellipse=232:270:3.8 and 2.3];

\draw[very thick, black, dashed, opacity=0.4] (1,0) [partial ellipse=180:360:0.3 and 0.2];


\begin{scope}[shift={(2.5, 1.7)}]
\draw[very thick] (0.1, 0.1) -- (-0.1,-0.1);
\draw[very thick](0.1, -0.1) -- (-0.1,0.1);
\end{scope}


\draw[thin, black] (-2.5,0) [partial ellipse=-90:0:3.5 and 2];
\draw[very thick, black] (-2.5,0) [partial ellipse=-90:0:3.8 and 2.3];
\draw[very thick, red] (-2.5,0)
to [partial ellipse=-90:0:3.2 and 1.7] (-0.7,0);
\draw[very thick, red] (-1,0) [partial ellipse=180:360:0.3 and 0.2];


\draw[thin, black] (2.5,0) [partial ellipse=90:103:3.5 and 2];
\draw[very thick, black] (2.5,0) [partial ellipse=90:103:3.8 and 2.3];
\draw[very thick, cyan] (2.5,0) [partial ellipse=90:103:3.2 and 1.7];

\draw[thin, black, loosely dotted] (2.5,0) [partial ellipse=103:115:3.5 and 2];
\draw[very thick, black, loosely dotted] (2.5,0) [partial ellipse=103:115:3.8 and 2.3];
\draw[very thick, cyan, loosely dotted] (2.5,0) [partial ellipse=103:115:3.2 and 1.7];

\draw[thin, black] (2.5,0) [partial ellipse=115:180:3.5 and 2];
\draw[very thick, black] (2.5,0) [partial ellipse=115:180:3.8 and 2.3];
\draw[very thick, cyan] (2.5,0) [partial ellipse=115:180:3.2 and 1.7];

\draw[very thick, black] (1,0) [partial ellipse=0:180:0.3 and 0.2];

\draw[thin] (2.5, 2) -- (3,2);
\draw[thin] (2.5, -2) -- (3,-2);

\draw[very thick] (2.5, 2.3) -- (3,2.3);
\draw[very thick] (2.5, -2.3) -- (3,-2.3);

\draw[very thick] (2.5, 1.7) -- (3,1.7);
\draw[very thick] (2.5, -1.7) -- (3,-1.7);

\draw[very thick, blue] (2.5,2) [partial ellipse=-90:90:0.2 and 0.3];
\draw[very thick, blue, dashed, opacity=0.4] (2.5,2) [partial ellipse=90:270:0.2 and 0.3];

\draw[thin] (-2.5, 2) -- (-3,2);
\draw[thin] (-2.5, -2) -- (-3,-2);

\draw[very thick] (-2.5, 2.3) -- (-3,2.3);
\draw[very thick] (-2.5, -2.3) -- (-3,-2.3);

\draw[very thick] (-2.5, 1.7) -- (-3,1.7);
\draw[very thick, red] (-2.5, -1.7) -- (-3,-1.7);


\begin{scope}[shift={(-2.5, 2)}]
\draw[thin] (0.1, 0.1) -- (0,0);
\draw[thin] (0.1, -0.1) -- (0,0);
\end{scope}

\begin{scope}[shift={(2.5, -2)}]
\draw[thin] (-0.1, 0.1) -- (0,0);
\draw[thin] (-0.1, -0.1) -- (0,0);
\end{scope}

\begin{scope}[shift={(-1, -0.2)}, rotate=10]
\draw[very thick, red] (0.1, 0.1) -- (0,0);
\draw[very thick, red](0.1, -0.1) -- (0,0);
\end{scope}

\begin{scope}[shift={(-0.52, 0.55)}, rotate=60]
\draw[very thick, cyan] (0.1, 0.1) -- (0,0);
\draw[very thick, cyan](0.1, -0.1) -- (0,0);
\end{scope}


\node at (0.1,-0.4) {$\gamma_{K,r}$};
\node at (-3.3,2) {$K$};
\node at (3.3,-2) {$J_r$};
\node at (2.7,1.3) {$b_{J_r}$};
\node at (1.6,1.2) {$\alpha_{J_r,s}$};
\node at (0.5,2.8) {$ \beta_{J_r}^{-1} * \mu_{J_r} * \beta_{J_r}$};
\draw[->] (1.8,2.8)
to [out=right, in=100] (2.5, 2.4);

\end{tikzpicture}

\caption{$\gamma_{K,r}$ as a meridian~$\mu_{J_r}$.}
\label{fig:MeridianExpression}
\end{figure}

\end{proof}

\begin{lemma}\label{lem:LongitudeMeridian}
The longitude~$\lambda_K \in \pi$ agrees with
\[ \lambda_K =  \big( \beta_K \big)_\# \gamma_{K, m_K} * \cdots *
\big( \beta_K \big)_\# \gamma_{K,1} * \big(\beta_K\big)_\# \delta_K.\]
\end{lemma}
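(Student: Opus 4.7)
The plan is to establish the identity first as a relation inside $\pi_1(C_K, b_K)$, then push it forward to $\pi_1(X_L, \pt)$ via the inclusion $C_K \hookrightarrow X_L$ and the change-of-basepoint isomorphism induced by $\beta_K$. Since $\lambda_K = (\beta_K)_\# \lambda_{C_K}$ by definition, applying $(\beta_K)_\#$ to both sides of an equation in $\pi_1(X_L, b_K)$ translates it to the desired identity in $\pi(L) = \pi_1(X_L, \pt)$.

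The geometric input is the separating curve $\delta_K$ appearing in Lemma~\ref{lem:GenusTripleCommutator}. By construction, $\delta_K$ cuts $C_K$ into two subsurfaces: a ``genus piece'' $G$ (containing all the genus of $C_K$ and no intersection arcs, with $\delta_K$ as its only boundary) and a complementary planar piece $P$. The planar piece $P$ contains all the intersection arcs $\iota_{K,r}$ and all the meridional boundary circles, so its boundary consists of the outer curve $\lambda_{C_K}$, the inner curve $\delta_K$, and the $m_K$ meridional circles supporting the $\mu_{K,r}$.

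The key step is to apply the standard planar-surface relation inside $\pi_1(P, b_K)$, which is free: the outer boundary of $P$ is expressible as a product of the other boundary components, each conjugated to $b_K$ by a suitable path, in the cyclic order in which those components are encountered as $\lambda_{C_K}$ is traversed positively from $b_K$. For the $r$-th meridional circle (based at $a_{J_r,s}$ and traversed as $\mu_{K,r}$), the whisker $\alpha_{K,r} * \iota_{K,r}$ used to reach $b_K$ is built into the very definition of $\gamma_{K,r}$; and the inner curve $\delta_K$ is already based at $b_K$. Ordering the intersection points $a_{K,1}, \ldots, a_{K,m_K}$ as we traverse $\lambda_{C_K}$ from $b_K$, the planar-surface relation then reads
\[
\lambda_{C_K} = \gamma_{K,m_K} * \gamma_{K,m_K-1} * \cdots * \gamma_{K,1} * \delta_K \in \pi_1(C_K,b_K),
\]
whence the lemma follows upon including into $X_L$ and applying $(\beta_K)_\#$.

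The main technical obstacle is a bookkeeping one: confirming that the planar-surface relation yields the stated order $\gamma_{K,m_K} * \cdots * \gamma_{K,1}$ rather than its reverse, and with no spurious conjugations. This amounts to pinning down the orientation conventions — the positive direction on $\lambda_{C_K}$, the induced orientations of the meridional circles, the sides from which the arcs $\iota_{K,r}$ approach $a_{J_r,s}$, and the concatenation convention for $*$ — so that the encounter order of the $a_{K,r}$ matches the order in which the $\gamma_{K,r}$ appear. Once these conventions are fixed consistently with those used throughout Section~\ref{section:lower-central-series-quotients}, everything aligns and the identity follows directly from the standard surface relation, with no further input needed beyond the definitions.
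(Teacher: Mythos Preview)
Your proposal is correct and follows essentially the same approach as the paper: both arguments first establish the identity $\lambda_{C_K} = \gamma_{K,m_K} * \cdots * \gamma_{K,1} * \delta_K$ in $\pi_1(C_K, b_K)$ and then apply $(\beta_K)_\#$. The paper simply appeals to a figure depicting the planar homotopy, whereas you phrase the same content as the standard planar-surface relation for the outer boundary of the annular piece~$P$; these are the same argument.
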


\begin{proof}
The longitude~$\lambda_{C_K} \in \pi_1(C_K, b_K)$
is homotopic to
\[ \lambda_{C_K} = \gamma_{K, m_K} * \cdots * \gamma_{K, 1} * \delta_K \in \pi_1(C_K, b_K) \]
as depicted in Figure~\ref{fig:DiskHomotopy}. Whisker both sides with~$(\beta_K)_\#$ to obtain the statement.

\begin{figure}
\begin{tikzpicture}


\tikzset{
    partial ellipse/.style args={#1:#2:#3}{
        insert path={+ (#1:#3) arc (#1:#2:#3)}
    }
}


\draw[thin, black] (0.5,1.05)
to [out=55, in=255] (0.65, 1.35);

\draw[thin, black] (-1.35,-0.95)
to [out=45, in=225] (-1.05, -0.6);

\draw[thin, black] (1,-0.65)
to [out=-45, in=135] (1.3, -1);

\draw[very thick, black] (0,0) [partial ellipse=70:120:1.8 and 1.45];
\draw[very thick, black, loosely dotted] (0,0) [partial ellipse=120:150:1.8 and 1.45];
\draw[very thick, black] (0,0) [partial ellipse=150:270:1.8 and 1.45];

\draw[thin, black, dashed] (0,0.3) [partial ellipse=0:360:0.3 and 0.15];
\draw[thin, black] (0.3,0.3)
to [out=down, in=150] (0.4, -0.1);
\draw[thin, black] (-0.3,0.3)
to [out=down, in=30] (-0.4, -0.1);

\draw[thin, black] (0.4,0.9) [partial ellipse=0:360:0.2 and 0.17];

\draw[thin, black] (0.9,-0.5) [partial ellipse=0:360:0.2 and 0.17];

\draw[thin, black] (-0.9,-0.5) [partial ellipse=0:360:0.2 and 0.17];

\draw[very thick, black] (0,0) [partial ellipse=-90:70:1.8 and 1.45];

\node at (0, -1.8) {$b_K$};
\node at (-0.3, 1.7) {$\lambda_{C_K}$};

\begin{scope}[shift={(4, 0)}]
\draw[thin, black] (0.5,1.05)
to [out=55, in=255] (0.65, 1.35);

\draw[thin, black] (-1.35,-0.95)
to [out=45, in=225] (-1.05, -0.6);

\draw[thin, black] (1,-0.65)
to [out=-45, in=135] (1.3, -1);

\draw[very thick, black] (0,0) [partial ellipse=70:120:1.8 and 1.45];
\draw[very thick, black, loosely dotted] (0,0) [partial ellipse=120:150:1.8 and 1.45];
\draw[very thick, black] (0,0) [partial ellipse=150:240:1.8 and 1.45];

\draw[thin, black, dashed] (0,0.3) [partial ellipse=0:360:0.3 and 0.15];
\draw[thin, black] (0.3,0.3)
to [out=down, in=150] (0.4, -0.1);
\draw[thin, black] (-0.3,0.3)
to [out=down, in=30] (-0.4, -0.1);

\draw[thin, black] (0.4,0.9) [partial ellipse=0:360:0.2 and 0.17];

\draw[thin, black] (0.9,-0.5) [partial ellipse=0:360:0.2 and 0.17];

\draw[thin, black] (-0.9,-0.5) [partial ellipse=0:360:0.2 and 0.17];

\draw[very thick, black] (0,0) [partial ellipse=-100:70:1.8 and 1.45];

\draw[thin, black] (0,0) [partial ellipse=-120:-100:1.8 and 1.45];

\draw[very thick, black] (-1,-1.2)
to [out=-40, in=-60] (-0.65,-0.3)
to [out=120, in=-120] (-0.85, 0.1)
to [out=30, in=up] (-0.4, -0.8)
to [out=down, in=170] (-0.3, -1.43);

\node at (0, -1.8) {$b_K$};
\node at (-0.3, 1.7) {$\lambda_{C_K}$};

\end{scope}

\begin{scope}[shift={(8, 0)}]
\draw[thin, black] (0.5,1.05)
to [out=55, in=255] (0.65, 1.35);

\draw[thin, black] (-1.35,-0.95)
to [out=45, in=225] (-1.05, -0.6);

\draw[thin, black] (1,-0.65)
to [out=-45, in=135] (1.3, -1);

\draw[very thick, black] (0,0) [partial ellipse=70:120:1.8 and 1.45];
\draw[very thick, black] (0,0) [partial ellipse=75:210:1.65 and 1.30];
\draw[very thick, black] (0,0) [partial ellipse=-35:60:1.65 and 1.30];
\draw[very thick, black, loosely dotted] (0,0) [partial ellipse=120:150:1.8 and 1.45];
\draw[very thick, black] (0,0) [partial ellipse=150:227:1.8 and 1.45];

\draw[thin, black, dashed] (0,0.3) [partial ellipse=0:360:0.3 and 0.15];
\draw[thin, black] (0.3,0.3)
to [out=down, in=150] (0.4, -0.1);
\draw[thin, black] (-0.3,0.3)
to [out=down, in=30] (-0.4, -0.1);

\draw[thin, black] (0.4,0.9) [partial ellipse=0:360:0.2 and 0.17];
\draw[very thick, black] (0.4,0.9) [partial ellipse=90:390:0.35 and 0.32];

\draw[thin, black] (0.9,-0.5) [partial ellipse=0:360:0.2 and 0.17];
\draw[very thick, black] (0.9,-0.5) [partial ellipse=-20:270:0.35 and 0.32];

\draw[thin, black] (-0.9,-0.5) [partial ellipse=0:360:0.2 and 0.17];
\draw[very thick, black] (-0.9,-0.5) [partial ellipse=-100:190:0.35 and 0.32];

\draw[very thick, black] (0,0) [partial ellipse=-90:70:1.8 and 1.45];

\draw[thin, black] (0,0) [partial ellipse=227:270:1.8 and 1.45];

\draw[very thick, black] (-1.25,-1.04)
to [out=-45, in=left, looseness=0.4] (-0.95, -0.82);

\draw[very thick, black] (0.9, -0.82)
to [out=right, in=30] (1, -1)
to [out=-150, in=right] (0.2,-1.2)
to [out=left, in=left, looseness=0.4] (0.2,-0.3)
to [out=right, in=down] (0.6, 0)
to [out=up, in=right] (0.38, 0.2);

\draw[very thick, black] (0, -1.45)
to [out=left, in=right, looseness=0.3] (-0.2, -0.3)
to [out=left, in=down] (-0.6, 0)
to [out=up, in=left] (-0.38, 0.2);

\draw[very thick, black] (-1.25, -0.55)
to [out=-80, in=-50] (-1.43, -0.64);

\draw[very thick, black] (0.42, 1.26)
to [out=-20, in=-20] (0.39, 1.22);

\draw[very thick, black] (0.70, 1.05)
to [out=120, in=160] (0.83, 1.12);

\draw[very thick, black] (1.23, -0.60)
to [out=-100, in=-135] (1.36, -0.74);

\node at (0, -1.8) {$b_K$};
\node at (-0.3, 1.7) {$\lambda_{C_K}$};

\end{scope}

\begin{scope}[shift={(4, -1.45)}]
\draw[very thick, black] (-0.1, 0.1) -- (0.1, -0.1);
\draw[very thick, black] (-0.1, -0.1) -- (0.1, 0.1);
\end{scope}
\begin{scope}[shift={(8, -1.45)}]
\draw[very thick, black] (-0.1, 0.1) -- (0.1, -0.1);
\draw[very thick, black] (-0.1, -0.1) -- (0.1, 0.1);
\end{scope}
\begin{scope}[shift={(0, -1.45)}]
\draw[very thick, black] (-0.1, 0.1) -- (0.1, -0.1);
\draw[very thick, black] (-0.1, -0.1) -- (0.1, 0.1);
\end{scope}

\begin{scope}[shift={(1.8,0)}]
\draw[very thick, black] (-0.1, -0.1) -- (0, 0);
\draw[very thick, black] (0.1, -0.1) -- (0, 0);
\end{scope}
\begin{scope}[shift={(-1.8,0)}]
\draw[very thick, black] (0.1, 0.1) -- (0, 0);
\draw[very thick, black] (-0.1, 0.1) -- (0, 0);
\end{scope}

\begin{scope}[shift={(5.8,0)}]
\draw[very thick, black] (-0.1, -0.1) -- (0, 0);
\draw[very thick, black] (0.1, -0.1) -- (0, 0);
\end{scope}
\begin{scope}[shift={(2.2,0)}]
\draw[very thick, black] (0.1, 0.1) -- (0, 0);
\draw[very thick, black] (-0.1, 0.1) -- (0, 0);
\end{scope}

\end{tikzpicture}
\caption{Homotopy from the path~$\lambda_{C_K}$ to
$\gamma_{K, m_i} * \cdots * \gamma_{K, 1} * \delta_K$.}
\label{fig:DiskHomotopy}
\end{figure}

\end{proof}

Write $\varepsilon_{K,r} \in \{\pm 1\}$ for the sign of the intersection
at $a_{K,r}$ between $K$ and $\Sigma_{J_r}$. Note that the map $\sigma_K$, and each of the $\varepsilon_{K,r}$, can be read off from the abstract~C-complex. We now proceed to show that~$g_{K, r} \in \pi/\pi_2$
is also determined by the abstract~C-complex.

\begin{lemma}\label{lem:Conjugates}
The loop~$g_{K,r} = \beta_{K} * \alpha_{K,r} * \iota_{K,r} * \alpha^{-1}_{J,s} * \beta_J^{-1}$ has abelianisation
\[ [ g_{K,r} ] = \sum_{q = 1}^{r-1}\varepsilon_{K, q} [\mu_{\sigma_K(q)}]
 - \sum_{q = 1}^{s-1}\varepsilon_{J, q} [\mu_{\sigma_J(q)}]
 + \begin{cases}
 0 & \text{$r$-th clasp is positive} \\
 \mu_K - \mu_J & \text{$r$-th clasp is negative}
 \end{cases}\]
 in~$\pi/\pi_2$.
\end{lemma}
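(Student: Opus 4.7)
My plan is to identify $[g_{K,r}] \in \pi/\pi_2 \cong H_1(X_L;\Z) \cong \Z^n$ by computing, for each $i$, the signed intersection number $g_{K,r} \cdot \Sigma_i$ with the $C$--complex surface $\Sigma_i$; this equals the coefficient of $[\mu_i]$ in the sought expansion. I will analyse the contribution of each of the five sub-paths of the decomposition $g_{K,r} = \beta_K * \alpha_{K,r} * \iota_{K,r} * \alpha_{J,s}^{-1} * \beta_J^{-1}$ in turn.

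First, the basing paths $\beta_K$ and $\beta_J^{-1}$ are chosen disjoint from every $\Sigma_j$ and contribute nothing. Second, the sub-arc $\alpha_{K,r}$ traces the zero-framed push-off $\lambda_{C_K}$ of $K$, which is disjoint from $\Sigma_K$; for $i \neq K$ it meets $\Sigma_i$ transversely at exactly the points $a_{K,q}$ with $q<r$ and $\sigma_K(q)=i$, each contributing $\varepsilon_{K,q}$. This yields the first summand $\sum_{q=1}^{r-1}\varepsilon_{K,q}[\mu_{\sigma_K(q)}]$, and the symmetric analysis of $\alpha_{J,s}^{-1}$ produces $-\sum_{q=1}^{s-1}\varepsilon_{J,q}[\mu_{\sigma_J(q)}]$. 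Third, the clasp arc $\iota_{K,r} \subset \Sigma_K \cap \Sigma_J$ is automatically disjoint from $\Sigma_i$ for $i \notin \{K, J\}$, because a $C$--complex has only pairwise surface intersections, so it contributes zero to those coefficients.

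The remaining and most delicate step is to compute the contribution of $\iota_{K,r}$ to the coefficients of $\mu_K$ and $\mu_J$. For this I will set up a local model near the clasp in which $\Sigma_K = \{y=0\}$ and $\Sigma_J = \{z=0\}$ meet transversely along the $x$--axis, with $\iota_{K,r}$ parametrising the segment between its two endpoints $a_{K,r} \in T_K$ and $a_{J,s} \in T_J$; the convention of Remark~\ref{remark:clasp-orientations} determines in this model whether the normal orientations of the two coordinate planes are the standard pair or one of them is reversed. I will then perturb $\iota_{K,r}$ generically off the intersection line into transverse position with respect to both surfaces, coordinating the perturbation with small shifts of the common endpoints of $\alpha_{K,r}$ and $\alpha_{J,s}^{-1}$ so that the loop remains continuous. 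Counting the resulting signed crossings separately in the two cases will yield the stated correction term: in the positive clasp case the endpoint crossings cancel in pairs and give $0$, whereas in the negative clasp case the reversal of one of the normals produces a non-cancelling net contribution $\mu_K - \mu_J$.

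The main obstacle I anticipate is the careful orientation bookkeeping inside this local model. Near each of the two endpoints of $\iota_{K,r}$, the perturbation can create crossings with both $\Sigma_K$ and $\Sigma_J$ whose signs depend on the clasp orientation and on the direction chosen for pushing off each surface, and these must be attributed consistently so that the net count reproduces the case-split dictated by the clasp sign. Once that local analysis is pinned down, assembling the contributions of the five sub-paths of $g_{K,r}$ proves the formula.
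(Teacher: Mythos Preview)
Your proposal is correct and follows essentially the same approach as the paper: compute the coefficient of each $[\mu_q]$ as the algebraic intersection $g_{K,r}\cdot \Sigma_q$, observe that the basing paths contribute nothing, that the two $\alpha$ arcs give the two sums, and that only a local analysis of the deformed $\iota_{K,r}$ near the clasp is needed for the remaining correction term. The paper resolves your anticipated orientation obstacle by making a specific choice rather than a generic perturbation---it pushes $\alpha_{K,r}$ and $\alpha_{J,s}^{-1}$ to the \emph{negative} side of $C_K$ and $C_J$ (compatible with the convention that the basing arcs $\beta_K,\beta_J$ approach from the negative side), lets $\iota_{K,r}$ deform accordingly, and then reads off the two cases directly from an explicit picture of the positive and negative clasps.
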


\begin{proof}
Note that $\pi/\pi_2$ is the free abelian group~$\Z\langle \mu_1, \ldots, \mu_n\rangle$
generated by the meridians, and that
the coefficient of $\mu_q$ is the intersection number~$[g_{K,r}] \cdot \Sigma_q$.
To compute this number, we make~$g_{K,r}$ transverse to each~$\Sigma_q$:
push off~$\alpha_{K,r}$ and~$\alpha^{-1}_{J,s}$ slightly to the negative side
of~$C_K$ and $C_J$, and let $\iota_{K,r}$ deform accordingly.
As~$\beta_K$ was chosen to be disjoint from the C-complex
and approaching $C_K$ from the negative side,
all intersections lie on the path~$ \alpha_{K,r} * \iota_{K,r} * \alpha^{-1}_{J,s}$.
There are two kind of contributions: the intersection points on the $\alpha$ paths,
which correspond to intersections of the components~$K_r$ and $K_s$
with surfaces of the C-complex. These give rise to the first two summands.
The second contribution are intersections points on the (deformed)~$\iota_{K,r}$.
These depend on the sign of the clasp, and can be computed from the explicit
local models, as shown in Figure~\ref{fig:PositiveNegativeClasp}.

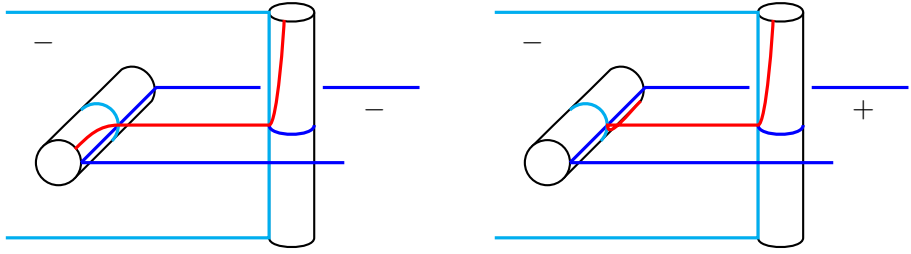
\begin{figure}
\begin{tikzpicture}


\draw[thick, black] (1.41,1.29)
to [out=45, in=-135] (2.44,2.32)
to [out=60, in=30, looseness=1.3] (2.04, 2.74)
to [out=-135,in=45] (0.99,1.71);
\draw[thick, black, fill=white] (1.2, 1.5) circle [radius=0.3];

\draw[very thick, blue] (1.5, 1.5) -- (2.5,2.5) -- (6,2.5);


\draw[thick, white, fill=white] (3.9,3.5)
to [out=down, in=up] (3.9,0.5)
to [out=down, in=down, looseness=0.7] (4.7, 0.5)
to [out=up, in=down] (4.7, 3.5);

\draw[thick, black, fill=white] (4,3.5)
to [out=down, in=up] (4,0.5)
to [out=down, in=down, looseness=0.7] (4.6, 0.5)
to [out=up, in=down] (4.6, 3.5);

\draw[thick, black, fill=white] (4,3.5)
to [out=up, in=up, looseness=0.7] (4.6,3.5)
to [out=down, in=down, looseness=0.7] (4,3.5);


\draw[very thick, cyan] (0.5, 0.5) -- (4,0.5) -- (4,3.5) -- (0.5, 3.5);

\draw [very thick, cyan] (1.5,2.2) arc (135:-45:0.29);

\draw [very thick, blue] (4,2)
to [out=down, in=down, looseness=0.7] (4.6, 2);

\draw[very thick, red] (4.2, 3.4)
to [out=down, in=right, looseness=0.2] (4,2)
to [out=left, in=right] (2,2)
to [out=left, in=45] (1.42,1.68);

\draw[very thick, blue] (5,1.5) -- (1.5,1.5);


\begin{scope}[shift={(6.5,0)}]
\draw[thick, black] (1.41,1.29)
to [out=45, in=-135] (2.44,2.32)
to [out=60, in=30, looseness=1.3] (2.04, 2.74)
to [out=-135,in=45] (0.99,1.71);
\draw[thick, black, fill=white] (1.2, 1.5) circle [radius=0.3];

\draw[very thick, blue] (1.5, 1.5) -- (2.5,2.5) -- (6,2.5);


\draw[thick, white, fill=white] (3.9,3.5)
to [out=down, in=up] (3.9,0.5)
to [out=down, in=down, looseness=0.7] (4.7, 0.5)
to [out=up, in=down] (4.7, 3.5);

\draw[thick, black, fill=white] (4,3.5)
to [out=down, in=up] (4,0.5)
to [out=down, in=down, looseness=0.7] (4.6, 0.5)
to [out=up, in=down] (4.6, 3.5);

\draw[thick, black, fill=white] (4,3.5)
to [out=up, in=up, looseness=0.7] (4.6,3.5)
to [out=down, in=down, looseness=0.7] (4,3.5);


\draw[very thick, cyan] (0.5, 0.5) -- (4,0.5) -- (4,3.5) -- (0.5, 3.5);

\draw [very thick, cyan] (1.5,2.2) arc (135:-45:0.29);

\draw [very thick, blue] (4,2)
to [out=down, in=down, looseness=0.7] (4.6, 2);

\draw[very thick, red] (4.2, 3.4)
to [out=down, in=right, looseness=0.2] (4,2)
to [out=left, in=right] (2,2)
to [out=down, in=225] (2.44,2.32);

\draw[very thick, blue] (5,1.5) -- (1.5,1.5);

\node at (1, 3.1) {\large $-$};
\node at (5.4, 2.2) {\large $+$};

\end{scope}

\node at (1, 3.1) {\large $-$};
\node at (5.4, 2.2) {\large $-$};

\end{tikzpicture}
\caption{Contribution of the intersection points of~$\iota_{K,r}$ in a positive clasp (left)
and a negative clasp (right)
}\label{fig:PositiveNegativeClasp}
\end{figure}

\end{proof}

\noindent We have now collected enough information on the longitude words to prove the first main technical theorem of this section.

\begin{theorem}\label{thm:SameWords}
Let $L$ and $L'$ be two oriented, ordered links with homeomorphic C-complexes and
fundamental groups~$\pi(L)$ and $\pi(L')$ respectively.
Then for every $k=1,\dots,n$, there exists words~$\ell_k, \ell_k' \in F$ such that both
\begin{enumerate}
\item $\ell_k(\mu_1, \ldots, \mu_n) = \lambda_k \text{ mod } \pi(L)_4$ and
$\ell'_k(\mu'_1, \ldots, \mu'_n) = \lambda'_k \text{ mod } \pi(L')_4$,
\item $\ell_k = \ell'_k \text { mod } F_3$
\end{enumerate}
hold.
\end{theorem}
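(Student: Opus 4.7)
The plan is to exploit the explicit descriptions of longitudes provided by Lemmas~\ref{lem:GenusTripleCommutator}, \ref{lem:MeridianRewrite}, \ref{lem:LongitudeMeridian}, and \ref{lem:Conjugates}. First, choose a basing of $L$ subordinate to its C-complex, and use the C-complex homeomorphism to fix a subordinate basing of $L'$ with matching combinatorics. By Lemma~\ref{lem:LongitudeMeridian},
\[ \lambda_K = \big(\beta_K\big)_\# \gamma_{K,m_K} * \cdots * \big(\beta_K\big)_\# \gamma_{K,1} * \big(\beta_K\big)_\# \delta_K \in \pi(L), \]
where by Lemma~\ref{lem:GenusTripleCommutator} the factor $(\beta_K)_\# \delta_K$ lies in $\pi(L)_3$, and by Lemma~\ref{lem:MeridianRewrite} each remaining factor equals $\mu_{J_r}^{g_{K,r}}$ for a loop $g_{K,r}$ whose abelianisation $[g_{K,r}] \in \pi(L)/\pi(L)_2$ is given by Lemma~\ref{lem:Conjugates} as an explicit signed sum of meridians. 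The corresponding expression for $L'$ is identical because the homeomorphism preserves the signs $\varepsilon_{K,r}$, the pairing maps $\sigma_K$, the orderings of intersection points along $\lambda_{C_K}$, and the clasp signs.

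Next, pick, independently of the link, a word $w_{K,r} \in F$ whose image in $F/F_2$ equals the common formula from Lemma~\ref{lem:Conjugates}, and define
\[ \bar{\ell}_K := \mu_{J_{m_K}}^{w_{K,m_K}} \cdot \mu_{J_{m_K-1}}^{w_{K,m_K-1}} \cdots \mu_{J_1}^{w_{K,1}} \in F. \]
The combinatorial C-complex data yields the same indices $J_r$ and the same words $w_{K,r}$ for both links, so $\bar{\ell}_K$ is a single word that serves $L$ and $L'$ simultaneously. By Lemma~\ref{lem:CommutatorCalculus}(\ref{comm-calc-item-2}), the class of a conjugate $a^g$ in a third lower central series quotient depends only on $g$ modulo the second; hence $\bar{\ell}_K(\mu_1,\dots,\mu_n) \equiv \lambda_K \bmod \pi(L)_3$ and likewise $\bar{\ell}_K(\mu'_1,\dots,\mu'_n) \equiv \lambda'_K \bmod \pi(L')_3$.

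To lift agreement from modulo $\pi_3$ to modulo $\pi_4$, use Milnor's surjection $F \twoheadrightarrow \pi(L)/\pi(L)_4$ from Theorem~\ref{thm:milnor-thm-4}: combined with the isomorphism $F/F_2 \cong \pi(L)/\pi(L)_2$ and a standard induction via commutator calculus (each iterated length-three commutator in $\pi(L)$ can be lifted, modulo $\pi(L)_4$, to a length-three commutator in $F$), this gives a surjection $F_3/F_4 \twoheadrightarrow \pi(L)_3/\pi(L)_4$, and similarly for $L'$. Choose $c_K \in F_3$ so that $\ell_K := \bar{\ell}_K c_K$ satisfies $\ell_K(\mu_1,\dots,\mu_n) = \lambda_K$ in $\pi(L)/\pi(L)_4$, and independently $c'_K \in F_3$ so that $\ell'_K := \bar{\ell}_K c'_K$ satisfies the analogue for $L'$. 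Since $c_K, c'_K \in F_3$, we have $\ell_K \equiv \ell'_K \bmod F_3$, which is the required conclusion.

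The main obstacle is the very first step: arranging subordinate basings so that the C-complex homeomorphism translates into a clean match of all the combinatorial ingredients of Lemma~\ref{lem:Conjugates} (orderings of intersection points, clasp signs, pairing maps). Once this compatibility is in place, the rest of the argument is essentially commutator calculus together with the standard nilpotent lifting from modulo $\pi_3$ to modulo $\pi_4$.
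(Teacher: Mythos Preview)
Your proof is correct and follows essentially the same route as the paper: both arguments pick subordinate basings, use Lemmas~\ref{lem:LongitudeMeridian}, \ref{lem:MeridianRewrite}, \ref{lem:GenusTripleCommutator}, and \ref{lem:Conjugates} to write each longitude as a product of conjugated meridians (times a length-three commutator) whose conjugators are determined in $F/F_2$ by the combinatorics of the C-complex, and then invoke Lemma~\ref{lem:CommutatorCalculus}(\ref{comm-calc-item-2}) to conclude that this pins down the longitude word in $F/F_3$. The only organisational difference is that the paper lifts each conjugating loop $g_{K,r}$ and the genus loop $\delta_K$ individually to words in $F$ that represent them modulo $\pi_4$, so that the resulting $\ell_K$ already hits $\lambda_K$ mod $\pi_4$, and only afterwards checks $\ell_K \equiv \ell'_K$ mod $F_3$; you instead build a single common word $\bar{\ell}_K$ that works mod $\pi_3$ for both links and then correct each side separately by an element of $F_3$ using the surjection $F_3 \twoheadrightarrow \pi_3/\pi_4$. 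These two packagings are equivalent.
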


\begin{proof}
As described above, the strategy is to show that the longitude words, modulo the required subgroups, can be read off from the C-complex in a way that only depends on the homeomorphism type of the C-complex together with the signs of the clasps. But the signs of the clasps are determined by the orientations of intersection arcs, as explained in Remark~\ref{remark:clasp-orientations}.

Pick basings~$\{\beta_i\}$ for $L$ that are subordinate to the C-complex.
For each link component $K$, define~$\sigma_K$, and $\varepsilon_{K, i}$ as above.
Substitute using Lemma~\ref{lem:MeridianRewrite} in the expression of
Lemma~\ref{lem:LongitudeMeridian} to write the longitude as
\[  {\big(\mu_{\sigma_K(m_K)}^{\varepsilon_{K,m_K}} \big)}^{g_{K,m_K}} * \cdots *
	{\big(\mu_{\sigma_K(1)}^{\varepsilon_{K,1}} \big)}^{g_{K,1}} * \delta_K \in \pi(L).\]
Pick a triple commutator~$d_K \in F_3$,
and words~$h_{K, r} \in F$ such that
\begin{align*}
d_K(\mu_1, \ldots, \mu_n) &= \delta_K \text{ mod } \pi_4 &
h_{K,r}(\mu_1, \ldots, \mu_n) &= g_{K,r} \text{ mod } \pi_4
\end{align*}
for all~$1\leq r \leq m_K$. Now define
\[ \ell_K = {\big(\mu_{\sigma_K(m_K)}^{\varepsilon_{K,m_K}} \big)}^{h_{K,m_K}} * \cdots *
	{\big(\mu_{\sigma_K(1)}^{\varepsilon_{K,1}} \big)}^{h_{K,1}} * d_K \in F, \]
which is a word such that~$\ell_K(\mu_1, \ldots, \mu_n) = \lambda_K \text{ mod } \pi_4$. Do this for each component~$K$ of~$L$.
	
Repeat all of the above with the link~$L'$ to obtain words~$\ell'_{K} \in F$; note that the orderings determine a bijection between the set of components of $L$ and the set of components of $L'$, and we use this identification implicitly from now on.
We claim that~$\ell_K = \ell'_K \in F/F_3$.
Note that both~$d_K$ and $d'_K$
are triple commutators, so they can be safely ignored. Since the C-complexes are homeomorphic, the only difference that can occur is in the conjugating words $h_{K,r}$  and $h'_{K,r}$.
By Lemma~\ref{lem:CommutatorCalculus}~(\ref{comm-calc-item-2}), with~$k=2$, we just have to show that
$h_{K,r} = h'_{K,r} \text{ mod } F_2$.

Observe that under the isomorphism~$F/F_2 \xrightarrow{\cong} \pi/\pi_2$,
the abelianisation~$[h_{K,r}]$ of~$h_{K,r}$ is sent to~$[g_{K,r}]$, which
we computed in Lemma~\ref{lem:Conjugates} to be:
\[ [ g_{K,r} ] = \sum_{q = 1}^{r-1}\varepsilon_{K, q} [\mu_{\sigma_K(q)}]
 - \sum_{q = 1}^{s-1}\varepsilon_{J, q} [\mu_{\sigma_J(q)}]
 + \begin{cases}
 0 & \text{$r$-th clasp is positive} \\
 \mu_K - \mu_J & \text{$r$-th clasp is negative}.
 \end{cases}\]
Consequently, the elements~$[ g_{K,r} ] = [g'_{K,r}]$ agree, and so
do~$[h_{K,r}] = [h'_{K,r}]$.
This shows~$\ell_K = \ell'_K \in F/F_3$.
\end{proof}

This enables us to prove Theorem~\ref{thm:Main-intro}~(\ref{item:main-thm-1})$\implies$(\ref{item:main-thm-4}).  First, as discussed in Section~\ref{section:C-complexes}, two links admit homeomorphic surface systems if and only if they admit homeomorphic C-complexes.  Then apply the next corollary.

\begin{corollary}\label{cor:homeo-C-cx-implies-pi-mod-pi-3}
Let $L$ and $L'$ be two oriented, ordered links with homeomorphic C-complexes and
fundamental groups~$\pi(L)$ and $\pi(L')$ respectively.
There exist choices of basings for $L$ and $L'$ and an isomorphism between the lower central series quotients
$\pi(L) / \pi(L)_{3}$ and $\pi(L') / \pi(L')_{3}$ that preserves the oriented, ordered meridians and the oriented, ordered longitudes determined by the respective basings.
\end{corollary}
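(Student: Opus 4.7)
The strategy is to reduce the corollary directly to Theorem~\ref{thm:SameWords} combined with Milnor's presentation (Theorem~\ref{thm:milnor-thm-4}), which will allow us to construct the isomorphism by observing that two presentations coincide on the nose.

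First, fix a homeomorphism between the C-complexes of $L$ and $L'$, and choose a basing $\{\beta_i\}$ for $L$ that is subordinate to its C-complex (Definition~\ref{defn:subordination}); transport this across the homeomorphism to obtain a subordinate basing $\{\beta'_i\}$ for $L'$. Apply Theorem~\ref{thm:SameWords} to obtain words $\ell_k, \ell'_k \in F = F\langle \mu_1, \ldots, \mu_n\rangle$ with $\ell_k(\mu) \equiv \lambda_k$ modulo $\pi(L)_4 \subseteq \pi(L)_3$, $\ell'_k(\mu') \equiv \lambda'_k$ modulo $\pi(L')_4 \subseteq \pi(L')_3$, and $\ell_k \equiv \ell'_k$ modulo $F_3$. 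Theorem~\ref{thm:milnor-thm-4} with $k=3$ then yields presentations
\[
\pi(L)/\pi(L)_3 \xleftarrow{\cong} \langle \mu_1, \ldots, \mu_n \mid [\mu_i, \ell_i],\, F_3 \rangle, \quad \pi(L')/\pi(L')_3 \xleftarrow{\cong} \langle \mu_1, \ldots, \mu_n \mid [\mu_i, \ell'_i],\, F_3 \rangle,
\]
where each $\mu_i$ corresponds to the respective meridian and the image of $\ell_i$ (resp.\ $\ell'_i$) is the respective longitude.

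Since $\ell_i \equiv \ell'_i$ modulo $F_3$ and the subgroup $F_3$ is itself among the relators, the commutators $[\mu_i, \ell_i]$ and $[\mu_i, \ell'_i]$ are equal in the quotient, so the two normal closures in $F$ agree and describe the same group. The identity on $\{\mu_1,\ldots,\mu_n\}$ therefore induces an isomorphism $\pi(L)/\pi(L)_3 \xrightarrow{\cong} \pi(L')/\pi(L')_3$ sending meridians to meridians. Tracing the longitudes through the two presentations shows that $\lambda_i \mapsto \lambda'_i$ as well.

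The main obstacle in this plan has already been overcome in Theorem~\ref{thm:SameWords}, which rests on the Magnus-style abelianisation computation of Lemma~\ref{lem:Conjugates} to guarantee that the conjugating words $g_{K,r}$ can be chosen to match modulo $F_2$ on both sides. Given that input, the corollary reduces to a straightforward comparison of two Milnor-type presentations, with the only care needed being that the basings are chosen compatibly under the C-complex homeomorphism so that Theorem~\ref{thm:SameWords} applies.
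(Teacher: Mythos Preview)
Your proposal is correct and follows essentially the same route as the paper: invoke Theorem~\ref{thm:SameWords} to obtain longitude words agreeing modulo $F_3$, feed these into Milnor's presentation (Theorem~\ref{thm:milnor-thm-4}) for $\pi/\pi_3$, and observe that the two presentations coincide so that $\mu_i \mapsto \mu_i'$ defines the required isomorphism. One small imprecision: you cannot literally ``transport'' the basing paths $\beta_i$ across the C-complex homeomorphism, since that homeomorphism is only defined on the surfaces, not on the ambient $S^3$; what you actually transport are the base points $b_{K_i}$ on the longitudinal boundaries, and then choose fresh subordinate paths $\beta_i'$ in $X_{L'}$---but this is exactly what the proof of Theorem~\ref{thm:SameWords} already does, so no harm results.
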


\begin{proof}
  For every $i=1,\dots,n$, let $\ell_i$ and $\ell_i' \in F$ be as in Theorem~\ref{thm:SameWords}.
By Theorem~\ref{thm:milnor-thm-4}, the quotient~$\pi(L)/\pi(L)_3$
admits a presentation
\[ \pi(L) / \pi(L)_3 \xleftarrow{\cong} \Big \langle \mu_1, \ldots, \mu_n  \Bigm\vert [\mu_i, \ell_i],
	F_3\Big \rangle. \]
An analogous presentation holds for $L'$, simply replacing $\ell_i$ with $\ell_i'$ for each $i$ and changing each $\mu_i$ as $\mu_i'$.
The homomorphism defined by sending $\mu_i \mapsto \mu_i'$ is an isomorphism since the relation $[\mu_i, \ell_i]$ is sent to $[\mu_i',\ell_i]$, which equals $[\mu_i,\ell_i']$ modulo~$F_3$ by Lemma~\ref{lem:CommutatorCalculus}~(\ref{comm-calc-item-3}) with $k=2$.
The fact that $\ell_i$ and $\ell_i'$ coincide in $\pi/\pi_4$ implies that they coincide in $\pi/\pi_3$.
\end{proof}

Similarly, the next corollary implies Theorem~\ref{theorem:lower-central-series-intro}, because the condition that two links admit homeomorphic C-complexes is equivalent to any of the conditions in Theorem~\ref{thm:Main-intro}, as discussed in Section~\ref{section:C-complexes}.

\begin{corollary}
Let $L$ and $L'$ be two oriented, ordered links with homeomorphic C-complexes and
fundamental groups~$\pi(L)$ and $\pi(L')$ respectively.
There exists an isomorphism between the lower central series quotients
$\pi(L) / \pi(L)_{4}$ and $\pi(L') / \pi(L')_{4}$ that preserves oriented, ordered meridians.
\end{corollary}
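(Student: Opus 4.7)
The plan is to mimic the proof of Corollary~\ref{cor:homeo-C-cx-implies-pi-mod-pi-3} but to push the argument one level further in the lower central series, using the precise form of Theorem~\ref{thm:SameWords}, which already provides longitude words agreeing modulo~$F_3$ and realising the true longitudes modulo~$\pi_4$. In particular, note that Theorem~\ref{thm:SameWords} was carefully formulated to give exactly the input needed for a Milnor-style presentation argument at the level of $\pi/\pi_4$; indeed the reason for tracking two different truncations (one for the relation between $\ell_k$ and $\ell_k'$, and one for the word realising the longitude) was precisely to have enough slack for this corollary.

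First I would apply Theorem~\ref{thm:SameWords} with respect to subordinate basings $\{\beta_i\}$ of $L$ and $\{\beta_i'\}$ of $L'$ inherited from the homeomorphic C-complexes, obtaining words $\ell_i, \ell_i' \in F = F\langle \mu_1,\ldots,\mu_n \rangle$ with $\ell_i(\mu_1,\ldots,\mu_n) = \lambda_i$ in $\pi(L)/\pi(L)_4$, $\ell_i'(\mu_1',\ldots,\mu_n') = \lambda_i'$ in $\pi(L')/\pi(L')_4$, and $\ell_i \equiv \ell_i'$ in $F/F_3$. Milnor's presentation (Theorem~\ref{thm:milnor-thm-4}) with $k=4$ then gives
\[
\pi(L)/\pi(L)_4 \xleftarrow{\cong} \big\langle \mu_1,\ldots,\mu_n \bigm\vert [\mu_i,\ell_i],\, F_4 \big\rangle, \qquad
\pi(L')/\pi(L')_4 \xleftarrow{\cong} \big\langle \mu_1',\ldots,\mu_n' \bigm\vert [\mu_i',\ell_i'],\, F_4 \big\rangle.
\]

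Next I would define a candidate homomorphism $\Phi \colon \pi(L)/\pi(L)_4 \to \pi(L')/\pi(L')_4$ by sending $\mu_i \mapsto \mu_i'$, and verify it is well-defined. For this I would check that each relator $[\mu_i,\ell_i]$ maps to a trivial element of $\pi(L')/\pi(L')_4$. Writing $\ell_i = g_i \ell_i'$ with $g_i \in F_3$, this is exactly the content of Lemma~\ref{lem:CommutatorCalculus}(\ref{comm-calc-item-3}) applied with $k=3$: for any $g \in F_3$ we have $[a, g b] \equiv [a, b]$ modulo $F_4$, so $[\mu_i', \ell_i] \equiv [\mu_i', \ell_i']$ in $F/F_4$, hence vanishes in $\pi(L')/\pi(L')_4$. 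Symmetrically, $\mu_i' \mapsto \mu_i$ defines an inverse homomorphism, showing $\Phi$ is an isomorphism, and by construction it preserves the ordered, oriented meridians.

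The main potential obstacle lies not in the algebraic manipulation but in justifying that the hypothesis of homeomorphic C-complexes suffices to apply Theorem~\ref{thm:SameWords} in a form strong enough for $\pi/\pi_4$; however, this is precisely what that theorem was proved for. I would then conclude by noting that the statement about $\pi/\pi_4$ under homeomorphic C-complexes is equivalent, via Corollary~\ref{cor-main-C-complexes} and the C-complex variant of Theorem~\ref{thm:Main-intro}, to the same statement under any of the equivalent conditions of Theorem~\ref{thm:Main-intro}, which gives Theorem~\ref{theorem:lower-central-series-intro}. Note that I do not expect the isomorphism to preserve longitudes, only meridians, which matches the statement of the corollary and reflects that the words $\ell_i$ and $\ell_i'$ only agree modulo $F_3$, not modulo $F_4$.
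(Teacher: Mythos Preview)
Your proposal is correct and follows essentially the same approach as the paper's proof: apply Theorem~\ref{thm:SameWords} to obtain longitude words agreeing modulo~$F_3$, invoke Milnor's presentation (Theorem~\ref{thm:milnor-thm-4}) at level~$k=4$, and use Lemma~\ref{lem:CommutatorCalculus}(\ref{comm-calc-item-3}) with~$k=3$ to identify the two presentations. The paper phrases the conclusion as ``the associated presentations are equal'' rather than explicitly constructing a homomorphism and its inverse as you do, but this is only a cosmetic difference.
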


\begin{proof}
For every $i=1,\dots,n$, let $\ell_i$ and $\ell_i' \in F$ be as in Theorem~\ref{thm:SameWords}.
Apply Theorem~\ref{thm:milnor-thm-4} to deduce that~$\pi(L)/\pi(L)_4$
admits the presentation
\[ \pi(L) / \pi(L)_4 \xleftarrow{\cong} \Big \langle \mu_1, \ldots, \mu_n  \Bigm\vert [\mu_i, \ell_i],
	F_4\Big \rangle. \]
The analogous presentation hold for $L'$, simply replacing $\mu_i$ with $\mu_i'$ and $\ell_i$ with $\ell_i'$ for each $i$.
By Lemma~\ref{lem:CommutatorCalculus}~(\ref{comm-calc-item-3}) with $k=3$,
the associated presentations are equal,
 and so we obtain
\begin{align*}
\pi(L) / \pi(L)_4 &\xleftarrow{\cong} \Big \langle \mu_1, \ldots, \mu_n  \Bigm\vert [\mu_i, \ell_i],
	F_4\Big \rangle \\
	&=\Big \langle \mu_1, \ldots, \mu_n  \Bigm\vert [\mu_i, \ell'_i(\mu_1,\ldots, \mu_n)],
	F_4\Big \rangle\\
	&= \Big \langle \mu'_1, \ldots, \mu'_n  \Bigm\vert [\mu'_i, \ell'_i],
	F_4\Big \rangle\\
	&\xrightarrow{\cong} \pi(L') / \pi(L')_4.
\end{align*}
\end{proof}

In the rest of the section, in order to show Theorem~\ref{thm:Main-intro}~(\ref{item:main-thm-4})$\implies$(\ref{item:main-thm-3}), we will show that the two weaker conditions below
are already enough to show that two links admit homeomorphic C-complexes:
\begin{align*}
\ell_k(\mu_1, \ldots, \mu_n) &= \lambda_k \text{ mod } \pi(L)_3 &
\ell'_k(\mu'_1, \ldots, \mu'_n) &= \lambda'_k \text{ mod } \pi(L')_3,\\
\ell_k &= \ell'_k \text { mod } F_3
\end{align*}
The second main technical result of this section is the next proposition.

\begin{proposition}\label{prop:mijk = magnus}
Let $L$ be a link with basing $\beta$ that is subordinate to a C-complex $\Sigma$.  For any word $\ell_k\in F$ in the meridians that is sent to the longitude~$\lambda_k = \ell_k(\mu_1, \ldots, \mu_n) \in \pi_1(L)/\pi_1(L)_3$, the $X_iX_j$--coefficient in the Magnus expansion of~$\ell_k$ is
\[ e_{ij}(\ell_k) = m_{ijk}(\Sigma) - \lk(L_k, L_j)\lk(L_i,L_j), \]
where $m_{ijk}$ is the quantity defined in  Section~\ref{section:milnor-numbers}.
\end{proposition}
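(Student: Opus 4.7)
The plan is to compute $e_{ij}(\ell_k)$ directly from the explicit expression for $\lambda_k$ provided in Lemma~\ref{lem:LongitudeMeridian}. First, using the version of Stallings' theorem invoked in the proof of Theorem~\ref{thm:milnor-thm-4}, the induced map $F/F_3 \to \pi/\pi_3$ is an isomorphism, so the image of $\ell_k$ in $F/F_3$ is determined by $\lambda_k \bmod \pi_3$. By Remark~\ref{rem:MagnusExp}, the coefficient $e_{ij}$ vanishes on $F_3$, so $e_{ij}(\ell_k)$ depends only on this class. Substituting Lemma~\ref{lem:MeridianRewrite} into Lemma~\ref{lem:LongitudeMeridian} and discarding the triple commutator $\delta_K$ (Lemma~\ref{lem:GenusTripleCommutator}), I would take as representative the word
\[
\ell_k \,=\, (\mu_{\sigma_k(m_k)}^{\varepsilon_{k,m_k}})^{h_{k,m_k}} \cdots (\mu_{\sigma_k(1)}^{\varepsilon_{k,1}})^{h_{k,1}}
\]
in $F/F_3$, where each $h_{k,r} \in F$ is any lift of $[g_{k,r}] \in F/F_2$, the latter being computed by Lemma~\ref{lem:Conjugates}.

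Next I would expand the Magnus series of each factor. Writing the abelianisation as $A = \sum_q \tilde a_q X_q$, a direct calculation yields
\[
M\bigl((\mu_a^\varepsilon)^h\bigr) \,\equiv\, 1 + \varepsilon X_a + \varepsilon [X_a, A] \pmod{\text{degree } 3},
\]
up to terms involving only $X_a$ (which contribute nothing to the $X_iX_j$ coefficient when $i \neq j$), where $[X_a, A] = X_a A - A X_a$. Multiplying the factors together, the degree-two part of $M(\ell_k)$ decomposes as a sum of the individual commutator contributions plus cross terms $v_s v_r$, where $v_r = \varepsilon_{k,r} X_{\sigma_k(r)}$ is the linear part of the $r$-th factor. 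The cross-term contribution to the $X_iX_j$ coefficient evaluates to a signed count of pairs of letters in $\wt w_k$ (giving $e_{ij}$ or $e_{ji}$ of $\wt w_k$ according to the order of multiplication), while the commutator contribution splits into two sums, one for each value of $\sigma_k(r) \in \{i,j\}$.

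The geometric input driving the combinatorics is that the two endpoints of a clasp have the same sign (Remark~\ref{remark:clasp-orientations}): $\varepsilon_{k,r} = \varepsilon_{\sigma_k(r), s(r)}$. This lets me rewrite the partial sums arising from Lemma~\ref{lem:Conjugates} as decomposition counts in the other clasp-words; for example,
\[
\sum_{r:\, \sigma_k(r) = i} \varepsilon_{k,r} \sum_{q < s(r),\, \sigma_i(q) = j} \varepsilon_{i,q} \;=\; e_{jk}(\wt w_i),
\]
by reparametrising via the bijection between clasps on $K_k \cap \Sigma_i$ and their endpoints on $K_i \cap \Sigma_k$. The analogous calculation in the $\sigma_k(r) = j$ case yields $e_{ik}(\wt w_j)$. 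Combining all pieces produces an expression involving $e_{ji}(\wt w_k)$, $e_{jk}(\wt w_i)$ and $e_{ik}(\wt w_j)$, and a single application of the identity $e_{ab}(w) + e_{ba}(w) = e_a(w) e_b(w)$ from~\eqref{eqn:e-relations} reassembles these into the cyclically symmetric $m_{ijk}$, releasing the correction term $-\lk(K_k, K_j)\lk(K_i, K_j)$. The main obstacle will be precisely this final bookkeeping: tracking the sign conventions, product order, and which index gets converted via the cyclic relation, so that the correction emerges exactly as $\lk(K_k, K_j)\lk(K_i, K_j)$ rather than some other quadratic combination of linking numbers.
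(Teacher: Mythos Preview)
Your proposal is correct and follows essentially the same route as the paper. The paper works entirely with the occurrence-calculus relations~\eqref{eqn:e-relations}, expanding $e_{ij}$ of the product of conjugated meridians and simplifying $e_{ij}\big((\mu_{J_r})^{h_{K,r}}\big)$ directly, whereas you phrase the identical computation as a degree-two Magnus expansion $1+\varepsilon X_a+\varepsilon[X_a,A]$ and then multiply; by Remark~\ref{rem:MagnusExp} these are the same operation. Both arguments land on the same splitting into a cross term (giving an $e_{\bullet\bullet}(\wt w_k)$), two ``commutator'' sums evaluated via Lemma~\ref{lem:Conjugates} and the clasp-endpoint bijection, and a single use of $e_{ab}+e_{ba}=e_ae_b$ to release the linking-number correction---exactly the bookkeeping you flagged as the main obstacle.
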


\begin{proof}
In order to match the notation used previously in this section,
let $K$ be the~$k$-th component of the link~$L$.
Recall from Lemma~\ref{lem:LongitudeMeridian} that
$\lambda_K =  \big( \beta_K \big)_\# \gamma_{K, m_p} * \cdots *
\big( \beta_K \big)_\# \gamma_{K,1} * \big(\beta_K\big)_\# \delta_K$,
where $p = m_K$ is the number of clasps in $\Sigma_k$.
Appeal to Lemma~\ref{lem:GenusTripleCommutator} and Lemma~\ref{lem:MeridianRewrite}
to obtain
\[ \lambda_K =  {\big(\mu_{J_p} \big)}^{g_{K,p}} * \cdots *{\big(\mu_{J_1} \big)}^{g_{K,1}} \text{ mod }\pi_3.\]
Pick~$h_{K,i} \in F$ such that $h_{K,i}(\mu_1, \ldots, \mu_n) = g_{K,i}$.
Now define
\[ \ell_K := {\big(\mu_{J_p} \big)}^{h_{K,p}} * \cdots *{\big(\mu_{J_1} \big)}^{h_{K,1}} \in F. \]
By Remark~\ref{rem:MagnusExp}, we can compute
the Magnus expansion of $\ell_K$ from $e_{ij}(\ell_K)$ and it only depends
on the coset $\ell_K \in F/F_3$.

Multiple applications of the occurrence calculus from Equation~\eqref{eqn:e-relations}
in Section~\ref{section:milnor-numbers}, in particular the relation $e_{rs}(u \cdot v) =e_{rs}(u) + e_{rs}(v) + e_r(u) e_s(v)$, reveals that
\begin{equation}\label{eqn:e-ij-lK-1}
\begin{split}
e_{ij}(\ell_K) &= e_{ij}\left({\big(\mu_{J_p} \big)}^{h_{K,p}} * \cdots *{\big(\mu_{J_1} \big)}^{h_{K,1}}\right)\\
&= \sum_{r=1}^p e_{ij}\left({\big(\mu_{J_r} \big)}^{h_{K,r}}\right) +
\sum_{r=1}^p e_{i}\left({\big(\mu_{J_r} \big)}^{h_{K,r}}\right)e_{j}
	\left(\ell_{K, r-1}\right)\\
&= \sum_{r=1}^p e_{ij}\left({\big(\mu_{J_r} \big)}^{h_{K,r}}\right) +
\sum_{r=1}^p e_{i}\left(\mu_{J_r}\right) e_{j}	\left(\ell_{K, r-1}\right),
\end{split}
\end{equation}
where $\ell_{K,r} := {\big(\mu_{J_r} \big)}^{h_{K,r}} * \cdots *{\big(\mu_{J_1} \big)}^{h_{K,1}}$.
By inspection, $e_j(\ell_{K,r-1})$ is precisely the number of clasps between $\Sigma_k$ and $\Sigma_j$ that appear before the $r$-th clasp, counted with sign, while
\[ e_{i}\left(\mu_{J_r}\right) = \begin{cases}
 0 & J_r\neq K_i \\
 1 & J_r = K_i\text{ and the }r\text{-th clasp is positive}
 \\
-1 & J_r = K_i\text{ and the }r\text{-th clasp is negative.}
 \end{cases}\]
 Thus $\sum_{r=1}^p e_{i}\left(\mu_{J_r}\right) e_{j}\left(\ell_{K, r-1}\right)$ counts how many times a $(\Sigma_K,\Sigma_j)$ clasp appears before a $(\Sigma_K, \Sigma_i)$ clasp (counting with signs).  That is,
 \begin{equation}\label{eqn:e-ji-wk-2}
   \sum_{r=1}^p e_{i}\left(\mu_{J_r}\right)e_{j}\left(\ell_{r-1}\right) = e_{ji}(w_k),
   \end{equation}
 where $w_k$ is the clasp-word of the component~$\Sigma_k$
of the C-complex $\Sigma$.

It remains to analyse $\sum_{r=1}^p e_{ij}\big({\big(\mu_{J_r} \big)}^{h_{K,r}}\big)$.
Note that $e_j\big(h_{K,r}^{-1}\big) = -e_j\big(h_{K,r}\big)$.
Expand the trivial word~$h_{K,r} h_{K,r}^{-1}$ using
Equation~\eqref{eqn:e-relations}
to see that~$e_{ij}\big(h_{K,r}^{-1}\big) = -e_{ij}(h_{K,r})+ e_i(h_{K,r})e_j(h_{K,r})$. Together with $e_{ij}(\mu_{J_r})=0$, it follows that
 \[
 e_{ij}\left({\big(\mu_{J_r} \big)}^{h_{K,r}}\right)
 =
 e_{ij}\big(h_{K,r}* \mu_{J_r}* h_{K,r}^{-1}\big)
 =
 e_{i}(h_{K,r}) e_j(\mu_{J_r}) - e_i(\mu_{J_r})e_{j}(h_{K,r}),
 \]
 so that
 \begin{equation}\label{eqn:e-ij-3}
 \sum_{r=1}^p e_{ij}\left({\big(\mu_{J_r} \big)}^{h_{K,r}}\right)  =
 \sum_{r=1}^p e_{i}(h_{K,r}) e_j(\mu_{J_r})
 	- \sum_{r=1}^p e_i(\mu_{J_r})e_{j}(h_{K,r}).\end{equation}

Notice that since $e_j(\mu_{J_r}) = 0$ unless $J_r = K_j$, and since $e_i(\mu_{J_r}) = 0$ unless~$J_r = K_i$,  we may throw out most of the terms in the sums above.  When $J_r = K_j$, we read $e_{i}(h_{K,r})$ from the abelianisation of $h_{K,r}$ found in Lemma~\ref{lem:Conjugates}:
 \[
 e_{i}(h_{K,r}) = \sum_{q = 1}^{r-1}\varepsilon_{K, q}
 	e_i\big( \mu_{\sigma_K(q)}\big)
 	-  \sum_{q = 1}^{s(r)-1} \varepsilon_{J, q} e_i\big( \mu_{\sigma_J(q)}\big).
 \]
The notation $s(r)$ has not appeared for some time: the definition can be found just after Definition~\ref{defn:subordination}.
Now,  $\sum_{q = 1}^{r-1}\varepsilon_{K, q} e_i\big( \mu_{\sigma_K(q)}\big)$
returns the number of $(\Sigma_k,\Sigma_i)$ clasps on $\Sigma_k$ (counted with sign) prior to the $r$-th clasp of $\Sigma_k$.  On the other hand,~$\sum_{q = 1}^{s-1}\varepsilon_{J, q} e_i\big(\mu_{\sigma_J(q)}\big)$ gives the number of $(\Sigma_j , \Sigma_i)$ clasps on $\Sigma_j$ (counted with sign)  prior to the same clasp (now we order the clasps by following $\partial \Sigma_j$).
Let~$e_i^{< r} (w_k)$ be the number of signed occurrences of the letter~$i$
before the $r$-th letter in the word~$w_k$. In this notation:
\[ \sum_{q = 1}^{r-1}\varepsilon_{K, q} e_i\big( \mu_{\sigma_K(q)}\big)
	= e_i^{< r} (w_k)
\]
Now compute
\begin{equation}\label{eqn:e-ij-4}
\begin{split}
\sum_{r=1}^p e_{i}(h_{K,r}) e_j(\mu_{J_r})
&= \sum_{r=1}^p \left( \sum_{q = 1}^{r-1}\varepsilon_{K, q}
 	e_i\big( \mu_{\sigma_K(q)}\big)
 	-  \sum_{q = 1}^{s(r)-1} \varepsilon_{J, q} e_i\big( \mu_{\sigma_J(q)}\big)\right)e_j(\mu_{J_r}) \\
&= \sum_{r=1}^p \big( e_i^{< r} (w_k) -  e_i^{< s(r)} (w_j)\big)e_j(\mu_{J_r}) \\
&= \sum_{r=1}^p e_i^{< r} (w_k)e_j(\mu_{J_r})  - \sum_{r=1}^p e_i^{< s(r)} (w_j)e_j(\mu_{J_r}) \\
&= e_{ij}(w_k) - e_{ik}(w_j) \\
\end{split}
\end{equation}
Rename the indices to get
\begin{equation}\label{eqn:e-ij-5}
\sum_{r=1}^p e_{j}(h_{K,r})  e_i(\mu_{J_r}) = e_{ji}(w_k) - e_{jk}(w_i),
\end{equation}
 from which we deduce, by substituting (\ref{eqn:e-ij-4}) and (\ref{eqn:e-ij-5}) into (\ref{eqn:e-ij-3}):
 \begin{equation}\label{eqn:e-ij-6}
 \sum_{r=1}^p e_{ij}\left({\big(\mu_{J_r} \big)}^{h_{K,r}}\right)  = e_{ij}(w_k) - e_{ik}(w_j) - e_{ji}(w_k) + e_{jk}(w_i).
 \end{equation}
 Take the right hand side of (\ref{eqn:e-ij-lK-1}), apply (\ref{eqn:e-ij-6}) to the first term, and (\ref{eqn:e-ji-wk-2}) to the second term, to yield:
\begin{align*}
 e_{ij}(\ell_k) &= e_{ji}(w_k) + e_{ij}(w_k) - e_{ik}(w_j) - e_{ji}(w_k) + e_{jk}(w_i)\\
 &= e_{ij}(w_k) - e_{ik}(w_j) + e_{jk}(w_i)\\
 &= e_{ij}(w_k) + e_{ki}(w_j) - e_k(w_j)e_i(w_j) + e_{jk}(w_i).
\end{align*}
This shows the claim, since $e_k(w_j)e_i(w_j) = \lk(L_k, L_j)\lk(L_i, L_j)$ and $m_{ijk} = e_{ij}(w_k) + e_{ki}(w_j) + e_{jk}(w_i)$ by definition.
\end{proof}

The proof of Theorem~\ref{thm:LCS converse} below will require choosing a C-complex subordinate to an arbitrary basing, and we demonstrate that this is always possible in the following lemma.

\begin{lemma}\label{lem:Subordinate}
If $\beta$ is any basing of the link $L$, then there exists a C-complex $\Sigma$ for $L$ that is subordinate to $\beta$.
\end{lemma}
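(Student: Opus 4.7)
My plan is to begin with any C-complex $\Sigma^0$ for $L$---such a C-complex exists by taking Seifert surfaces for each component in general position and resolving triple points and ribbon intersections into clasps---and then modify $\Sigma^0$ in three successive steps to enforce the conditions of subordination: (i) the longitudinal boundary $\lambda_{C_i}$ passes through $b_i$ and $b_i$ avoids $\Sigma_j$ for $j\neq i$; (ii) the path $\beta_i$ is disjoint from every $C_j$ away from $b_i$; and (iii) $\beta_i$ approaches $b_i$ from the negative side of $\Sigma_i$.

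For (i), I will exploit the fact that the longitudinal boundary $\lambda_{C_i^0}$ is a zero-framed push-off of $K_i$ on $T_i$, and that any two zero-framed push-offs on $T_i$ are isotopic through zero-framed push-offs by a meridional rotation. Extending such a rotation to an ambient isotopy of $S^3$ supported in a small collar of $T_i$---chosen to be disjoint from the other link components and from the basing paths $\beta_j$ for $j\neq i$---moves $\lambda_{C_i^0}$ through $b_i$ while leaving the basing untouched. A small generic perturbation then arranges for $b_i$ to avoid the finitely many meridional circles $\Sigma_j\cap T_i$ for $j\neq i$, establishing (i).

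For (ii), I will first perturb each $\beta_i$ within its homotopy class rel endpoints to be transverse to the modified C-complex. Then $\beta_i\cap \Sigma_j$ consists of finitely many transverse interior points together with the terminal point $b_i$ when $j=i$. I would eliminate each interior intersection $p\in \beta_i\cap C_j$ using an ambient isotopy of $X_L$ supported in a thin tubular neighborhood of the sub-arc of $\beta_i$ from $p$ to $b_i$, dragging a small disk of $\Sigma_j$ along this arc until it clears $\beta_i$. Condition (iii) can then be arranged either by an appropriate choice of rotation direction in the isotopy used for~(i), or by a further small local modification of $\Sigma_i$ in a neighborhood of $b_i$ that flips the side of the collar from which $\beta_i$ enters.

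The hard part will be ensuring that the disjointness modifications in the second step do not introduce new triple intersection points, new ribbon or circle intersections, or unintended new clasp intersections between surfaces not involved in the finger move---any of which would destroy the C-complex property. Managing this requires a careful ordering of the finger moves and keeping their supporting tubular neighborhoods pairwise disjoint and disjoint from the remainder of the C-complex, so that each move is a truly local modification within a ball.
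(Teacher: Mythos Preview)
Your step~(ii) has a genuine gap. When you drag a disk of $\Sigma_j$ along the sub-arc of $\beta_i$ from $p$ toward $b_i$, the disk cannot simply ``clear $\beta_i$'': the endpoint $b_i$ lies on the boundary torus $T_i\subset\partial X_L$, and the longitudinal boundary of $\Sigma_i$ passes through $b_i$, so the finger must either stop short (leaving the intersection in place) or push into a collar of $\Sigma_i$, creating a new ribbon intersection between $\Sigma_j$ and $\Sigma_i$. Your final paragraph treats new ribbons as something to be avoided by careful bookkeeping of supports, but they are unavoidable here---$\Sigma_i$ blocks the only exit at $b_i$. The case $j=i$ is worse still: you would be pushing a piece of $\Sigma_i$ toward its own boundary collar, which makes no sense as an embedded isotopy.

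The paper's argument embraces rather than avoids the new intersections. Intersections of $\Sigma_i$ with its \emph{own} arc $\beta_i$ are pushed in the opposite direction, toward the common base point~$\operatorname{pt}$; this trades them for intersections with the other arcs $\beta_j$, $j\neq i$. All remaining intersections are then of the form $\Sigma_i\cap\beta_j$ with $i\neq j$, and each is removed by a finger move toward $b_j$ that deliberately creates a ribbon intersection with $\Sigma_j$. The result is a surface system subordinate to $\beta$ but no longer a C-complex; a final appeal to Cimasoni's conversion algorithm restores the C-complex property, with the pushing arcs in that algorithm chosen to avoid the points $b_j$ so that subordination is preserved. One minor further point: in step~(ii) you perturb $\beta_i$, but the lemma asks for subordination to the \emph{given} basing, so you should instead choose the initial C-complex transverse to the fixed arcs and keep $\beta$ untouched throughout.
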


\begin{proof}
Let $\operatorname{pt}$ be a choice of base point in $X_L$ and let $\beta$ be any basing for $L$.  For each  link component $K_i$, recall that $b_i$ denotes the end point of $\beta_i$ that lies on $K_i$.  Let $\Sigma$ be a C-complex for $L$ disjoint from $\operatorname{pt}$.  Make a local change to each $\Sigma_i$ close to $b_i \in K_i$ to arrange that $\beta_i$ approaches the point $b_i$ from the negative side of $\Sigma_i$, and otherwise has no points in common with $\Sigma_i$, at least close to $b_i$.

The local move in Figure \ref{fig:subordinate1} allows us to eliminate a point of intersection between~$\Sigma_i$ and the corresponding basing arc $\beta_i$ at a cost of adding intersections with every other base arc $\beta_j$ ($j\neq i$).  This move will also force us out of the category of~C-complexes as it may introduce many new intersections between $\Sigma_i$ and the other components of $\Sigma$.

\begin{figure}
\begin{tikzpicture}[scale=1]


\tikzset{
    partial ellipse/.style args={#1:#2:#3}{
        insert path={+ (#1:#3) arc (#1:#2:#3)}
    }
}

\draw[thick] (0,2) -- (0,-2);

\begin{scope}[shift={(0, 0)}]
\draw[thick, black] (-0.1, 0.1) -- (0.1, -0.1);
\draw[thick, black] (-0.1, -0.1) -- (0.1, 0.1);
\end{scope}

\draw[fill=red, fill opacity = 0.1, draw opacity=0] (0,2)
-- (0,-2)
-- (-0.6, -2.2)
-- (-0.6, 1.8);

\begin{scope}[shift={(2, 0.2)}]
\draw[thick, dotted, red, fill=red, fill opacity = 0.1] (0,-0.3)
-- (0,-1)
-- (-1.2, -1.4)
-- (-1.2, 0.6)
-- (0,1)
-- (0,-0.1);
\end{scope}

\begin{scope}[shift={(3,0)}]
\draw[very thick, black, fill = black] (0,0) [partial ellipse=0:360:0.04 and 0.04];
\draw (0,0)
to [out=70, in=down] (0,1)
to [out=up, in =-70] (-0.3,1.7);
\draw[very thick, black, fill = black] (0,0) [partial ellipse=0:360:0.04 and 0.04];
\draw (0,0)
to [out=10, in=200] (1,0.3)
to [out=20, in =160] (1.5,0);
\draw[very thick, black, fill = black] (0,0) [partial ellipse=0:360:0.04 and 0.04];
\draw (0,0)
to [out=-50, in=left] (1,-0.5)
to [out=right, in =left] (1.5,-1);

\draw (0,0) -- (-1.6, 0);
\draw[opacity=0.2] (-1.6,0) -- (-2.15, 0);
\draw (-2.3,0) -- (-3, 0);
\end{scope}

\begin{scope}[shift={(1.4, 0)}]
\draw[thick, black] (-0.1, 0.1) -- (0.1, -0.1);
\draw[thick, black] (-0.1, -0.1) -- (0.1, 0.1);
\end{scope}

\node at (0.25, -0.3) {$b_i$};
\node at (-0.3, 2.2) {$\Sigma_i$};
\node at (1.4,1.3) {$\Sigma_i$};
\node at (2.95,-0.4) {$\pt$};
\node at (2.5, 0.3) {$\beta_i$};

\begin{scope}[shift={(7,0)}]

\draw[thick] (0,2) -- (0,-2);

\begin{scope}[shift={(0, 0)}]
\draw[thick, black] (-0.1, 0.1) -- (0.1, -0.1);
\draw[thick, black] (-0.1, -0.1) -- (0.1, 0.1);
\end{scope}

\draw[fill=red, fill opacity = 0.1, draw opacity=0] (0,2)
-- (0,-2)
-- (-0.6, -2.2)
-- (-0.6, 1.8);

\begin{scope}[shift={(2, 0.2)}]
\draw[thick, dotted, red, fill=red, fill opacity = 0.1] (0,-0.5)
-- (0,-1)
-- (-1.2, -1.4)
-- (-1.2, 0.6)
-- (0,1)
-- (0,0.1);
\end{scope}

\begin{scope}[shift={(3,0)}]
\draw[very thick, black, fill = black] (0,0) [partial ellipse=0:360:0.04 and 0.04];
\draw (0,0)
to [out=70, in=down] (0,1)
to [out=up, in =-70] (-0.3,1.7);
\draw[very thick, black, fill = black] (0,0) [partial ellipse=0:360:0.04 and 0.04];
\draw (0,0)
to [out=10, in=200] (1,0.3)
to [out=20, in =160] (1.5,0);
\draw[very thick, black, fill = black] (0,0) [partial ellipse=0:360:0.04 and 0.04];
\draw (0,0)
to [out=-50, in=left] (1,-0.5)
to [out=right, in =left] (1.5,-1);

\draw [opacity=0.2] (0,0) -- (-1.6, 0);
\draw[opacity=0.2] (-1.6,0) -- (-2.15, 0);
\draw (-2.3,0) -- (-3, 0);
\end{scope}

\draw[red, thick, fill=red, fill opacity=0.1] (1.4, 0.4)
to [out=down, in=left] (1.6, 0.2)
to [out=right, in=left] (3, 0.2)
to [out=right, in=right, looseness=2] (3, -0.2)
to [out=left, in=right] (1.6, -0.2)
to [out=left, in=up] (1.4, -0.4);
\draw[draw opacity=0, fill opacity=0.1, fill = red] (1.4,0) [partial ellipse=90:270:0.1 and 0.4];

\end{scope}

\end{tikzpicture}
\caption{Left:  A point of intersection between $\Sigma_i$ a component of a C-complex and $\beta_i$ the corresponding basing arc.  Right:  A finger move replaces this point of intersection with a point of intersection between $\Sigma_i$ and $\beta_j$ for every $j\neq i$.}\label{fig:subordinate1}
\end{figure}

Thus, we need consider only intersection points in $\Sigma_i\cap \beta_j$ with $j\neq i$.  The finger move in Figure \ref{fig:subordinate2} replaces this point of intersection with a ribbon intersection between $\Sigma_i$ and $\Sigma_j$.

We have thus produced a surface system for $L$ that is subordinate to the basing~$\beta$.  In~\cite[Lemma 1]{Cimasoni04}, a general procedure is given for transforming a surface system into a C-complex. It amounts to a series of finger moves, each of which involves pushing $\Sigma_i$ along an arc in $\Sigma_j$ for some $j \neq i$.  By ensuring that the arc avoids the point $b_j$, we arrange that these finger moves do not introduce new intersections between $\Sigma$ and $\beta$.

\begin{figure}
\begin{tikzpicture}[scale=0.95]


\tikzset{
    partial ellipse/.style args={#1:#2:#3}{
        insert path={+ (#1:#3) arc (#1:#2:#3)}
    }
}

\draw[thick] (0,2) -- (0,-2);

\begin{scope}[shift={(0, 0)}]
\draw[thick, black] (-0.1, 0.1) -- (0.1, -0.1);
\draw[thick, black] (-0.1, -0.1) -- (0.1, 0.1);
\end{scope}

\draw[fill=red, fill opacity = 0.1, draw opacity=0] (0,2)
-- (0,-2)
-- (-1.2, -2.4)
-- (-1.2, 1.6);

\begin{scope}[shift={(2, 0.2)}]
\draw[thick, dotted, blue, fill=blue, fill opacity = 0.1] (-1.2,-0.1)
-- (-1.2, 0.6)
-- (0,1)
-- (0,-1)
-- (-1.2, -1.4)
-- (-1.2, -0.1);
\end{scope}

\begin{scope}[shift={(3,0)}]
\draw[very thick, black, fill = black] (0,0) [partial ellipse=0:360:0.04 and 0.04];
\draw (0,0)
to [out=70, in=down] (0,1)
to [out=up, in =-70] (-0.3,1.7);
\draw[very thick, black, fill = black] (0,0) [partial ellipse=0:360:0.04 and 0.04];
\draw (0,0)
to [out=10, in=200] (1,0.3)
to [out=20, in =160] (1.5,0);
\draw[very thick, black, fill = black] (0,0) [partial ellipse=0:360:0.04 and 0.04];
\draw (0,0)
to [out=-50, in=left] (1,-0.5)
to [out=right, in =left] (1.5,-1);

\draw (0,0) -- (-1.6, 0);
\draw[opacity=0.2] (-1.6,0) -- (-2.1, 0);
\draw (-2.3,0) -- (-3, 0);
\end{scope}

\begin{scope}[shift={(1.4, 0)}]
\draw[thick, black] (-0.1, 0.1) -- (0.1, -0.1);
\draw[thick, black] (-0.1, -0.1) -- (0.1, 0.1);
\end{scope}

\node at (0.35, -0.3) {$b_i$};
\node at (-0.6, 2.1) {$\Sigma_i$};
\node at (1.4,1.4) {$\Sigma_j$};
\node at (2.95,-0.4) {$\pt$};
\node at (2.5, 0.3) {$\beta_i$};

\begin{scope}[shift={(7,0)}]

\draw[thick] (0,2) -- (0, -0.15);
\draw[thick] (0, -0.35) -- (0,-2);

\begin{scope}[shift={(0, 0)}]
\draw[thick, black, opacity=0.3] (-0.1, 0.1) -- (0.1, -0.1);
\draw[thick, black, opacity=0.3] (-0.1, -0.1) -- (0.1, 0.1);
\end{scope}

\draw[fill=red, fill opacity = 0.1, draw opacity=0] (0,2)
-- (0,-2)
-- (-1.2, -2.4)
-- (-1.2, 1.6);

\begin{scope}[shift={(2, 0.2)}]
\draw[thick, dotted, blue, fill=blue, fill opacity = 0.1] (-1.2,0.0)
-- (-1.2, 0.6)
-- (0,1)
-- (0,-1)
-- (-1.2, -1.4)
-- (-1.2, -0.0);
\end{scope}

\begin{scope}[shift={(3,0)}]
\draw[very thick, black, fill = black] (0,0) [partial ellipse=0:360:0.04 and 0.04];
\draw (0,0)
to [out=70, in=down] (0,1)
to [out=up, in =-70] (-0.3,1.7);
\draw[very thick, black, fill = black] (0,0) [partial ellipse=0:360:0.04 and 0.04];
\draw (0,0)
to [out=10, in=200] (1,0.3)
to [out=20, in =160] (1.5,0);
\draw[very thick, black, fill = black] (0,0) [partial ellipse=0:360:0.04 and 0.04];
\draw (0,0)
to [out=-50, in=left] (1,-0.5)
to [out=right, in =left] (1.5,-1);

\draw (0,0) -- (-1.6, 0);
\draw[opacity=0.2] (-1.77,0) -- (-2.1, 0);
\draw[opacity=0.2] (-2.3,0) -- (-3, 0);
\end{scope}

\begin{scope}[shift={(1.4,0)}]
\draw[draw opacity=0, fill=blue, fill opacity=0.1] (0,0) [partial ellipse=110:250:0.08 and 0.4]
to [out=110, in=right] (-0.2, -0.25)
to [out=left, in=right] (-1.5, -0.25)
to [out=left, in=left, looseness=2] (-1.5, 0.25)
to [out=right, in=left] (-0.2, 0.25)
to [out=right, in=-100] (-0.09, 0.3);
\draw[blue, thick, opacity=0.3] (-0.04, 0.35)
to [out=-110, in=right] (-0.2, 0.25)
to [out=left, in=right] (-0.5, 0.25);
\draw[blue, thick] (-0.7, 0.25)
to [out=left, in=right] (-1.3, 0.25);
\draw[blue, thick] (-1.5,0.25)
to [out=left, in=left, looseness=2] (-1.5, -0.25)
to [out=right, in=left] (-0.7, -0.25);
\draw[out=right, in=left, opacity=0.3, blue,thick] (-0.5,-0.25)
to [out=right, in=left] (-0.2, -0.25)
to [out=right, in=up] (-0.04, -0.35);
\draw[thick, blue, fill=white, fill opacity=0.2] (0,0) [partial ellipse=15:345:0.08 and 0.4];

\draw (1.6,0) -- (0, 0);
\end{scope}

\end{scope}

\end{tikzpicture}
\caption{Left:  A point of intersection between $\Sigma_j$ a component of a C-complex and as basing arc $\beta_i$ with $i\neq j$.  Right:  A finger move replaces this point of intersection with a ribbon intersection between $\Sigma_i$ and $\Sigma_j$.}\label{fig:subordinate2}
\end{figure}

\end{proof}

Our final theorem uses Proposition~\ref{prop:mijk = magnus} to prove the
remaining implication: Theorem~\ref{thm:Main-intro}~(\ref{item:main-thm-4}) $\implies$ (\ref{item:main-thm-3}).

\begin{theorem}\label{thm:LCS converse}
Let $L$ and $L'$ be $n$-component ordered, oriented links with basings~$\beta$ and
$\beta'$ respectively.  Suppose that for each $k=1,\dots, n$, there exists a word~$\ell_k\in F$ such that
\[
\ell_k(\mu_1, \ldots, \mu_n) = \lambda_k \text{ mod } \pi(L)_3 \text{ and }
\ell_k(\mu'_1, \ldots, \mu'_n) = \lambda'_k \text{ mod } \pi(L')_3 \]
Then the total Milnor invariants $\mu(L)$ and $\mu(L')$ agree.
\end{theorem}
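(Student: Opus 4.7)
The plan is to deduce equality of the total Milnor invariants from Proposition~\ref{prop:mijk = magnus} by applying it to both links using a single common longitude word $\ell_k$. To set this up, I would first invoke Lemma~\ref{lem:Subordinate} to choose a C-complex $\Sigma$ for $L$ subordinate to $\beta$ and a C-complex $\Sigma'$ for $L'$ subordinate to $\beta'$. Since C-complexes have no triple intersections, $t_{ijk}(\Sigma)=t_{ijk}(\Sigma')=0$, so $\mu(L) = m(\Sigma)$ and $\mu(L') = m(\Sigma')$ in $\Pow$ by the well-definedness Theorem~\ref{thm:MilnorWellDef}.

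Before invoking the Magnus-expansion formula, I would first verify that the hypothesis forces the pairwise linking numbers of $L$ and $L'$ to agree. Pushing the identities $\ell_k(\mu_1,\dots,\mu_n) = \lambda_k$ and $\ell_k(\mu'_1,\dots,\mu'_n) = \lambda'_k$ down from $\pi/\pi_3$ to the abelianisation $\pi/\pi_2 \cong \Z^n$, the coefficient of $\mu_i$ on each side computes to $e_i(\ell_k) = \lk(K_i,K_k) = \lk(K'_i,K'_k)$ for $i \neq k$.

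Now I would apply Proposition~\ref{prop:mijk = magnus} to each link separately using the same word $\ell_k$, yielding
\begin{align*}
e_{ij}(\ell_k) &= m_{ijk}(\Sigma) - \lk(K_k,K_j)\lk(K_i,K_j), \\
e_{ij}(\ell_k) &= m_{ijk}(\Sigma') - \lk(K'_k,K'_j)\lk(K'_i,K'_j).
\end{align*}
Since the linking numbers coincide, subtracting the two equations gives $m_{ijk}(\Sigma) = m_{ijk}(\Sigma')$ for all distinct triples $i,j,k$. Summing over $i<j<k$ in the alternating module $W$ gives $m(\Sigma) = m(\Sigma')$, and therefore $\mu(L) = \mu(L')$ in $\Pow$.

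I expect the main obstacle to be essentially bookkeeping rather than a substantive mathematical difficulty: the subtle point is ensuring that the chosen C-complexes are genuinely subordinate to the given basings (so that the base points used to define the clasp-words are precisely the ones induced from $\beta$ and $\beta'$), which is exactly what Lemma~\ref{lem:Subordinate} provides. With that alignment in place, Proposition~\ref{prop:mijk = magnus} does all the heavy lifting, and the deduction of equal linking numbers from $\pi/\pi_3$ data is immediate from abelianisation.
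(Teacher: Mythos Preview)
Your proposal is correct and follows essentially the same approach as the paper: choose C-complexes subordinate to the given basings via Lemma~\ref{lem:Subordinate}, deduce equal linking numbers from the abelianisation, apply Proposition~\ref{prop:mijk = magnus} with the common word $\ell_k$ to obtain $m_{ijk}(\Sigma) = m_{ijk}(\Sigma')$, and use $t_{ijk}=0$ for C-complexes to conclude. The paper's proof is identical in structure and content.
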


\begin{proof}
In order to emphasise the dependence of $m_{ijk}$ on the C-complex we will write $m_{ijk}(\Sigma)$ and $m_{ijk}(\Sigma')$.
For each $k=1,\dots, n$, pick a word~$\ell_k\in F$ with $\ell_k(\mu_1, \ldots, \mu_n) = \lambda_k$ and $\ell_k(\mu'_1, \ldots, \mu'_n) = \lambda_k'$ as in the statement of the theorem.
Since the linking numbers can be computed in terms of~$\ell_k\in F/F_2$, we conclude that the links $L$ and $L'$ have identical pairwise linking numbers.

Pick C-complexes~$\Sigma$ and $\Sigma'$ such that $\beta$ and $\beta'$
are subordinate to them. The existence of such~C-complexes is guaranteed by Lemma~\ref{lem:Subordinate}.
Since the links~$L$ and~$L'$ have identical pairwise linking numbers,
Proposition~\ref{prop:mijk = magnus} implies, for any~$i,j,k$,  that
\[ m_{ijk}(\Sigma) = e_{ij}(\ell_{k}) + \lk(L_k, L_j)\lk(L_i,L_j) =  m_{ijk}(\Sigma'). \] Since these are C-complexes, they have no triple intersections.  Thus, even as elements of $W = \bigwedge^3  \Z^n$, the total Milnor invariants $\mu(L)$ and $\mu(L')$ agree. Thus they also agree in the quotient~$\Pow$.
\end{proof}

\bibliographystyle{alpha}
\bibliography{Milnor}

\begin{thebibliography}{CGO01}

\bibitem[CF08]{Cimasoni-Florens}
David Cimasoni and Vincent Florens.
\newblock Generalized {S}eifert surfaces and signatures of colored links.
\newblock {\em Trans. Amer. Math. Soc.}, 360(3):1223--1264, 2008.

\bibitem[CGO01]{Cochran01}
Tim Cochran, Amir Gerges, and Kent Orr.
\newblock Dehn surgery equivalence relations on 3-manifolds.
\newblock {\em Math. Proc. Cambridge Philos. Soc.}, 131(1):97--127, 2001.

\bibitem[Cim04]{Cimasoni04}
David Cimasoni.
\newblock A geometric construction of the {C}onway potential function.
\newblock {\em Comment. Math. Helv.}, 79(1):124--146, 2004.

\bibitem[Coc90]{Cochran90}
Tim Cochran.
\newblock Derivatives of links: {M}ilnor's concordance invariants and
  {M}assey's products.
\newblock {\em Mem. Amer. Math. Soc.}, 84(427):x+73, 1990.

\bibitem[Coo82]{Cooper82}
Daryl Cooper.
\newblock The universal abelian cover of a link.
\newblock In {\em Low-dimensional topology ({B}angor, 1979)}, volume~48 of {\em
  London Math. Soc. Lecture Note Ser.}, pages 51--66. Cambridge Univ. Press,
  Cambridge-New York, 1982.

\bibitem[DR17]{Davis16}
Christopher~W. Davis and Grant Roth.
\newblock When do links admit homeomorphic {$C$}-complexes?
\newblock {\em J. Knot Theory Ramifications}, 26(1):1750010, 13, 2017.

\bibitem[HL90]{HL90}
Nathan Habegger and Xiao-Song Lin.
\newblock The classification of links up to link-homotopy.
\newblock {\em J. Amer. Math. Soc.}, 3(2):389--419, 1990.

\bibitem[HL98]{HL98}
Nathan Habegger and Xiao-Song Lin.
\newblock On link concordance and {M}ilnor's {$\overline {\mu}$} invariants.
\newblock {\em Bull. London Math. Soc.}, 30(4):419--428, 1998.

\bibitem[LD88]{LeDimet:1988-1}
Jean-Yves Le~Dimet.
\newblock Cobordisme d'enlacements de disques.
\newblock {\em M\'em. Soc. Math. France (N.S.)}, (32):ii+92, 1988.

\bibitem[Lev88]{Levine-homotopy-classn}
Jerome Levine.
\newblock An approach to homotopy classification of links.
\newblock {\em Trans. Amer. Math. Soc.}, 306(1):361--387, 1988.

\bibitem[Mil57]{Milnor:1957-1}
John Milnor.
\newblock Isotopy of links. {A}lgebraic geometry and topology.
\newblock In {\em A symposium in honor of S. Lefschetz}, pages 280--306.
  Princeton University Press, Princeton, N. J., 1957.

\bibitem[Mil65]{Milnor-h-cob}
John Milnor.
\newblock {\em Lectures on the $h$-cobordism theorem}.
\newblock Princeton University Press, Princeton, N.J., 1965.
\newblock Notes by L. Siebenmann and J. Sondow.

\bibitem[MKS76]{MKS76}
Wilhelm Magnus, Abraham Karrass, and Donald Solitar.
\newblock {\em Combinatorial Group Theory - Presentations of Groups in Terms of
  Generators and Relations}.
\newblock Second Revised Edition, Dover Publications, 1976.

\bibitem[MM03]{Mellor03}
Blake Mellor and Paul Melvin.
\newblock A geometric interpretation of {M}ilnor's triple linking numbers.
\newblock {\em Algebr. Geom. Topol.}, 3:557--568, 2003.

\bibitem[Sta65]{Stallings65}
John Stallings.
\newblock Homology and central series of groups.
\newblock {\em J. Algebra}, 2:170--181, 1965.

\end{thebibliography}
\end{document}